\newtheorem{theorem}{Theorem}
\newtheorem{corollary}[theorem]{Corollary}
\newtheorem{lemma}[theorem]{Lemma}
\newtheorem*{klaim}{Claim}
\newtheorem{conjecture}[theorem]{Conjecture}
\newtheorem{question}[theorem]{Question}
\DeclareMathOperator\tw{tw}
\DeclareMathOperator\stw{stw}
\DeclareMathOperator\lev{lev}
\DeclareMathOperator\lin{lin}
\DeclareMathOperator\modu{mod}
\DeclareMathOperator\clos{clos}
\let\epsi\varepsilon
\def\email#1{\texttt{\small{#1}}}
\newcommand{\set}[1]{\{#1\}}
\newcommand{\Oh}{\mathcal{O}} 
\newcommand{\calC}{\mathcal{C}}
\newcommand{\calD}{\mathcal{D}}
\newcommand{\calF}{\mathcal{F}}
\newcommand{\calB}{\mathcal{B}}
\begin{document}

\begin{frontmatter}[classification=text]

\title{Improved Bounds for Centered Colorings}

\author[MD]{Michał Dębski}
\author[SF]{Stefan Felsner\thanks{Supported by DFG grant FE 340/13-1.}}
\author[PM]{Piotr Micek\thanks{Supported partially by a Polish National Science Center grant
(BEETHOVEN; UMO-2018/31/G/ST1/03718).}}
\author[FS]{Felix Schröder}

\renewcommand{\thefootnote}{\fnsymbol{footnote}}
\footnotetext{A preliminary version of this paper appeared as an extended
  abstract in the Proceedings of the 31st annual ACM-SIAM Symposium on Discrete Algorithms, SODA 2020~\cite{DFMS20}.}

\begin{abstract}
  A vertex coloring $\phi$ of a graph $G$ is $p$-\emph{centered} if for every
  connected subgraph~$H$ of $G$ either $\phi$ uses more than $p$ colors on $H$
  or there is a color that appears exactly once on~$H$.  Centered colorings
  form one of the families of parameters that allow to capture notions of
  sparsity of graphs: A class of graphs has bounded expansion if and only if
  there is a function~$f$ such that for every $p\geq1$, every graph in the
  class admits a $p$-centered coloring using at most $f(p)$ colors.

  In this paper, we give upper bounds for the maximum number of colors needed
  in a $p$-centered coloring of graphs from several widely studied graph
  classes.  We show that: (1)~planar graphs admit $p$-centered colorings with
  $\Oh(p^3\log p)$ colors where the previous bound was $\Oh(p^{19})$; (2)
  bounded degree graphs admit $p$-centered colorings with $\Oh(p)$ colors
  while it was conjectured that they require an exponential number of colors
  in~$p$; (3) graphs avoiding a fixed graph as a topological minor admit
  $p$-centered colorings with a polynomial in $p$ number of colors.  All these
  upper bounds imply polynomial algorithms for computing the colorings.

  Prior to this work there were no non-trivial lower bounds known.  We show
  that: (4) there are graphs of treewidth $t$ that require~$\binom{p+t}{t}$
  colors in any $p$-centered coloring; this matches the known upper bound.
  (5)~there are planar graphs that require $\Omega(p^2\log p)$ colors in any
  $p$-centered coloring.  We also give asymptotically tight bounds for
  outerplanar graphs and planar graphs of treewidth $3$.  We prove our results
  using a variety of techniques.  The upper bound for planar graphs involves
  an application of a recent structure theorem while the upper bound for
  bounded degree graphs comes from the entropy compression method.  We lift
  the result for bounded degree graphs to graphs avoiding a fixed topological
  minor using the Grohe\hspace{0.5pt}--Marx structure theorem.
\end{abstract}
\end{frontmatter}

\section{Introduction}
Structural graph theory has expanded beyond the study of
classes of graphs that exclude a fixed minor.
One of the driving forces was, and is, to develop efficient algorithms 
for computationally hard problems for graphs that are `structurally sparse'.
Ne{\v{s}}et{\v{r}}il and Ossona de Mendez introduced the concepts of 
classes of graphs with 
\emph{bounded expansion}~\cite{NOdM08} and 
classes of graphs that are \emph{nowhere dense}~\cite{NOdM11}.
These are very robust properties that include 
every class excluding a fixed minor but also 
graphs of bounded book-thickness or
graphs that allow drawings with bounded number of crossings per edge,
see~\cite{NOdMW12}.
At first sight, bounded expansion might seem to be a weak property for
a graph class.
Yet, this notion captures enough structure to allow solving a wide range
of algorithmic problems efficiently: 
Dvořák, Kráľ and Thomas~\cite{DKT12} devised an FPT algorithm for testing 
first-order definable properties in classes of bounded expansion.

One reason that these new notions attracted much attention 
is the realization that they can be characterized in several, 
seemingly different ways.
Instead of providing the original definition 
we define bounded expansion in terms of centered chromatic numbers.
\subsection{Centered colorings}
Let $p$ be a positive integer.
A vertex coloring $\phi$ of a graph $G$ is $p$-\emph{centered}
if for every connected subgraph $H$ of $G$ 
either $\phi$ uses more than $p$ colors on $H$ or there is a color
that appears exactly once on $H$.
The $p$-\emph{centered chromatic number} $\chi_p(G)$ of $G$ is the minimum integer $k$ 
such that there is a $p$-centered coloring of $G$ using $k$ colors.

A vertex coloring of a graph is $1$-centered if and only if 
it is \emph{proper}, i.e.,  
adjacent vertices receive distinct colors.
A vertex coloring is $2$-centered if and only if it is a \emph{star coloring}, 
i.e., it is proper and every path on four vertices receives at least three colors.
A class $\calC$ of graphs is of \emph{bounded expansion} if and only if 
there exists a function $f:\mathbb{N}\to\mathbb{N}$ such that
for every integer $p\geq1$ and every $G\in\calC$ we have
$\chi_p(G)\leq f(p)$. 
Ne{\v{s}}et{\v{r}}il and Ossona de
Mendez, who originally defined bounded expansion in terms of maximum densities 
of shallow minors, 
showed in~\cite{NOdM08} that the definitions are equivalent.

Colorings, such that the condition depending on $p$ is omitted, are called \emph{centered}
colorings. The \emph{centered chromatic number} is the minimum number of colors in such a
coloring and coincides with the treedepth for every graph $G$~(\cite{NOdM06}, Lemma 4.2).
The \emph{treedepth} of a graph $G$ is the minimum height of a rooted forest 
$F$ such that $G\subseteq \clos(F)$ where 
(1) a \emph{rooted forest} is a
disjoint union of rooted trees; 
(2) the \emph{height} of a rooted forest $F$ is maximum number of vertices on 
a path from a root to a leaf in $F$;
(3) the closure $\clos(F)$ of $F$ is the graph with vertex set $V(F)$ and 
edge set $\set{\set{v,w}\mid \text{$v$ is an ancestor of $w$ in $F$}}$.
As observed in~\cite{NOdM06},  every $p$-centered coloring of a graph $G$ 
is also a \emph{treedepth}-$p$ \emph{coloring} of $G$, i.e., for every $i\leq p$
the union of any $i$ color classes induces a subgraph of $G$ of treedepth at most $i$.

\subsection{Algorithmic applications}
Low treedepth colorings are a central tool for designing parametrized algorithms 
in classes of bounded expansion.
For example, Pilipczuk and Siebertz~\cite{PS19} showed that when $\calC$ 
is a class of graphs avoiding a fixed minor,
then it can be decided whether a given $p$-vertex graph $H$ is a subgraph of a given $n$-vertex
graph $G$ in $\calC$ in time $2^{\Oh(p\log p)}\cdot n^{\Oh(1)}$ and space $n^{\Oh(1)}$.
The algorithm witnessing this statement starts with a computation 
of a $p$-centered coloring of $G$ with $p^{\Oh(1)}$ colors, and 
for each $p$-tuple of colors it applies a procedure 
to solve this problem for graphs $G$ of treedepth at most $p$.
The results of our paper imply a corresponding algorithm for the case where
$\calC$ is a class of graphs avoiding a fixed graph 
as a \emph{topological} minor.

The polynomial space complexity mentioned above is remarkable.  Typically
dynamic programming algorithms on tree decompositions use space exponential in
the width of the decomposition and there are complexity-theoretical reasons to
believe that without significant increase in time complexity, this cannot be
avoided.  On the other hand treedepth decompositions, sometimes called
elimination trees, allow to devise algorithms using only polynomial space in
the height of the decomposition, see a thorough study of this phenomenon by
Pilipczuk and Wrochna~\cite{PW18}.

Clearly, the running times of algorithms based on $p$-centered colorings 
heavily depend on the number of colors used.
A recent experimental work by O’Brien and Sullivan~\cite{BS17}
points to the lack of efficient coloring procedures
as the major bottleneck for applicability of these algorithms in practice.

\subsection{Results} In this paper, we improve the bounds for the maximum
$\chi_p(G)$ for graphs $G$ in several important sparse classes of graphs.
Most importantly, we reduce the upper bound on the number of colors needed for
a $p$-centered coloring of planar graphs from $\Oh(p^{19})$ to
$\Oh(p^3\log p)$ and we show how to color graphs of bounded degree with
$\Oh(p)$ colors; it was previously believed that there was an exponential
lower bound for this latter class.  All our bounds are supported with
polynomial time algorithms computing the coloring.

We proceed with a presentation of our results.

\begin{theorem}
Planar graphs admit $p$-centered colorings with $\Oh(p^3\log p)$ colors. 
\label{thm:planarup}
\end{theorem}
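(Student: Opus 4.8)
The plan is to exploit a recent structural decomposition of planar graphs together with the fact that $p$-centered colorings compose well under such decompositions. Concretely, I would use the product structure theorem: every planar graph $G$ is a subgraph of $H \boxtimes P$, where $P$ is a path and $H$ has bounded treewidth (treewidth at most $8$, or $6$ in the improved versions). This reduces the problem to two ingredients: (a) a good bound on $\chi_p$ for graphs of bounded treewidth, and (b) a lemma showing that $p$-centered colorings behave multiplicatively (up to the right factors) under the strong product with a path. For (a), graphs of treewidth $t$ admit $p$-centered colorings with $\binom{p+t}{t} = \Oh(p^t)$ colors, so with $t$ constant this is $\Oh(p^{\Oh(1)})$; but a black-box $\Oh(p^8)$ is not good enough, so I expect the real work is to be smarter about how the path coordinate is used.

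The key steps, in order, are as follows. First, I would state and prove (or cite) the product structure theorem in the form $G \subseteq H \boxtimes P$ with $\tw(H)$ bounded. Second, I would color the path $P$ with $\Oh(\log p)$ colors in such a way that within any $p$ consecutive vertices the coloring is "centered-like" — this is where a $\Theta(\log p)$ factor enters: one wants that any window of length $p$ along the path sees a color exactly once, which can be arranged by a hierarchical/binary scheme using $\Oh(\log p)$ colors (essentially a centered coloring of a path, whose $p$-centered chromatic number is $\Theta(\log p)$). Third, I would color $H$ with a $p$-centered coloring using $\Oh(p^{\Oh(1)})$ colors via the treewidth bound — but, crucially, I would instead want only roughly $\Oh(p^3)$ colors, which forces a more careful argument: one should not take a full $p$-centered coloring of the whole bounded-treewidth graph but rather combine layers cleverly. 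Fourth, I would take the product coloring $\phi(v) = (\phi_H(\text{proj}_H v),\, \phi_P(\text{proj}_P v))$, possibly refined, and argue that it is $p$-centered on $G$: given a connected subgraph $K$ of $G$ on which fewer than, or exactly, $p$ colors appear, I project to $H$ and to $P$ and use connectivity plus the window property of $\phi_P$ to locate a color appearing exactly once.

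The main obstacle, as I see it, is getting the exponent down to $3$ rather than something like $8$ or $9$: a naive product of a $\binom{p+\tw(H)}{\tw(H)}$-coloring of $H$ with an $\Oh(\log p)$-coloring of $P$ gives $\Oh(p^{\tw(H)}\log p)$, which is polynomial but with a large exponent. To reach $\Oh(p^3 \log p)$ one has to avoid paying the full $\binom{p+t}{t}$ on $H$; I expect the fix is to use the path direction to "absorb" part of the treewidth, for instance by observing that after fixing a bag structure one only needs the $H$-coloring to handle subgraphs of bounded size in a controlled number of layers, so that the effective treewidth parameter one pays $p^{(\cdot)}$ for is a small constant (ideally $2$ or $3$), with the remaining structure handled by the $\Oh(\log p)$ path coloring and constantly many "block" colors. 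Making this interaction precise — ensuring that in a connected subgraph of $G$ with few colors, either the $H$-projection is small enough that the low-exponent $H$-coloring already isolates a color, or the $P$-projection is a short window where $\phi_P$ isolates a color — is the technical heart of the argument, and I would structure the proof around a single lemma that packages exactly this trichotomy.
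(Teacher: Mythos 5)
You start from the right place (the Dujmovi\'c et al.\ product structure theorem) and you correctly identify the obstacle, but two of your key steps do not survive scrutiny. First, the claim that the path factor can be handled with $\Oh(\log p)$ colors is false: the $p$-centered chromatic number of a long path is exactly $p+1$ (with fewer than $p+1$ colors, some long subpath has no unique color, since a centered coloring of $P_n$ needs $\ge \log n$ colors, and such a subpath cannot exhibit more than $p$ colors either); the $\Theta(\log p)$ figure only applies to a path on $\approx p$ vertices. Worse, in the projection argument the trace of a connected subgraph $K$ on the fiber over an $H$-vertex with unique $H$-color is an \emph{arbitrary} subset of a $P$-window of length equal to the spread of $K$, not a subpath, so to isolate a color there you effectively need the path coloring to be injective on every window of length $p$ — that is, $\Theta(p)$ colors (the $i \bmod (p+1)$ trick), not $\Oh(\log p)$. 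So the path direction must contribute a factor of about $p$, and the $\log p$ in the target bound cannot come from it.

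Second, the reduction of the exponent from $\tw(H)\le 8$ down to $3$ is exactly the part you leave as a hope (``absorb part of the treewidth into the path direction''), and no such mechanism is known; even a black-box $\binom{p+3}{3}$ bound on a treewidth-$3$ quotient would only give $\Oh(p^4)$. The paper's route is different: it uses the version of the product structure theorem giving a vertex partition of layered width $3$ whose quotient is simultaneously \emph{planar} and of treewidth at most $3$, and colors $G$ by the triple (BFS-layer mod $p+1$, $p$-centered color of the quotient class, one of $3$ labels inside each class-layer intersection), yielding $\chi_p(G)\le 3(p+1)f(p)$ where $f(p)$ bounds $\chi_p$ of planar treewidth-$3$ graphs. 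The genuinely new ingredient is then a separate theorem that graphs of simple treewidth $k$ admit $p$-centered colorings with $\Oh(p^{k-1}\log p)$ colors, proved by induction over BFS layers with an outerplanar base case ($\Oh(p\log p)$, using an in-order-labelled binary tree of colors to forbid only $\Oh(\log p)$ colors per layer); planarity of the quotient is what makes this applicable with $k=3$, giving $f(p)=\Oh(p^2\log p)$ and hence $\Oh(p^3\log p)$ overall. In short, the $\log p$ comes from the quotient-side coloring, the linear factor in $p$ from the layering, and without the planarity of the quotient and the simple-treewidth theorem your outline cannot reach exponent $3$.
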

The previously best known bound was $\Oh(p^{19})$ due to
Pilipczuk and Siebertz~\cite{PS19}.
A key tool responsible for the improvement of the exponent is 
a recent structure theorem for planar graphs due to 
Dujmović et al.~\cite{DJMMUW-queues} which has its roots in~\cite{PS19}.
In Section~\ref{sec-application-of-the-structure-thm}, 
we give a precise statement of the theorem and we show how to use it to
color a planar graph in a $p$-centered way with $\Oh(p)\cdot f(p)$ colors 
where~$f(p)$ is the maximum number of colors needed in a $p$-centered coloring
of planar graphs of treewidth at most $3$.
The $\Oh(p^3\log p)$ bound then follows from
Theorem~\ref{thm:pb+p-bounds}.\ref{enu-stackedtri-up}.

Next we conclude an improved bound for graphs drawn on surfaces with bounded genus.
\begin{theorem}\label{thm:genusup}
  Graphs with Euler genus $g$ admit $p$-centered
  colorings with $\Oh(gp+p^3\log p)$ colors.
\end{theorem}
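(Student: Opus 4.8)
The plan is to reduce the bounded-genus case to the planar case by cutting the surface along a small set of curves, so that after deleting few vertices the remainder becomes planar, and then invoke Theorem~\ref{thm:planarup}. Concretely, let $G$ have Euler genus $g$. The first step is to recall the standard fact that one can choose a set $X\subseteq V(G)$ with $|X|=\Oh(g)$ such that $G-X$ is planar; this follows from taking a BFS layering from an arbitrary root, contracting, and using that a genus-$g$ surface is obtained from a disk by adding $\Oh(g)$ handles/crosscaps, or more cleanly from the known result that bounded-genus graphs have a vertex set of size $\Oh(g)$ whose removal yields a planar graph (a "planarizing" set). In fact the cleanest route is: there is a set of $\Oh(g)$ noncontractible curves whose removal makes the surface planar; pushing these curves to vertices of $G$ gives $X$ with $|X|=\Oh(g)$ and $G-X$ planar.

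The second step is to color $G-X$ with a $p$-centered coloring $\phi_0$ using $\Oh(p^3\log p)$ colors by Theorem~\ref{thm:planarup}, and then give every vertex of $X$ its own private color, using $|X|=\Oh(g)$ fresh colors not used by $\phi_0$. Call the resulting coloring $\phi$; it uses $\Oh(gp + p^3\log p)$ colors, matching the claimed bound. Note $\Oh(g+p^3\log p)$ would also suffice from this construction, but the weaker $\Oh(gp+p^3\log p)$ bound in the statement certainly follows, so I will not optimize here.

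The third step — the only place that needs an argument — is to verify that $\phi$ is $p$-centered. Take any connected subgraph $H$ of $G$ on which $\phi$ uses at most $p$ colors; I must find a color appearing exactly once on $H$. If $H$ contains a vertex of $X$, that vertex's color is private to a single vertex of $G$, hence appears exactly once on $H$, and we are done. Otherwise $H$ is a connected subgraph of $G-X$ on which $\phi_0$ uses at most $p$ colors, so since $\phi_0$ is $p$-centered on $G-X$ there is a color appearing exactly once on $H$; as $\phi$ agrees with $\phi_0$ on $V(G)\setminus X$, the same color works for $\phi$. This completes the proof.

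The main obstacle is purely the first step: pinning down and citing the right "planarizing set of size $\Oh(g)$" statement, since several variants exist in the literature (planarizing via vertex deletion, via edge deletion, via noncontractible curves) and one must be careful that the version used gives a \emph{vertex} set whose deletion leaves a \emph{planar} graph, rather than merely a graph embeddable in a surface of smaller genus. Once that is fixed, the coloring argument is immediate, and the polynomial-time algorithm follows because both the planarizing set and the planar $p$-centered coloring (by Theorem~\ref{thm:planarup}) can be computed in polynomial time.
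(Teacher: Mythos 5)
There is a genuine gap, and it is exactly at the step you flagged as "the only place that needs an argument" on the surface side: the claim that every graph of Euler genus $g$ has a vertex set $X$ with $|X|=\Oh(g)$ such that $G-X$ is planar is false. A toroidal grid $C_n\,\square\,C_n$ has Euler genus $\Oh(1)$, yet after deleting any set of at most $n-3$ vertices there remain three untouched row cycles and three untouched column cycles, whose union is a subdivision of the non-planar graph $C_3\,\square\,C_3$; hence every planarizing vertex set has size $\Omega(n)$, not $\Oh(g)$. The "cut along $\Oh(g)$ noncontractible curves" heuristic does not rescue this: there are indeed $\Oh(g)$ curves, but each curve may meet arbitrarily many vertices of $G$, so the resulting vertex set is not bounded in terms of $g$ alone. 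The fact that your construction would even give $\Oh(g+p^3\log p)$, strictly better than the stated $\Oh(gp+p^3\log p)$, is a symptom of this: the factor $p$ in the genus term is not slack in the paper's argument but reflects that the planarizing set can only be controlled \emph{per BFS layer}, not globally.

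The paper's proof (Corollary~\ref{thm-bounded-genus-technical}) uses Lemma~\ref{lem-Euler-genus} from~\cite{DJMMUW-queues}: for any BFS-layering $(V_0,V_1,\ldots)$ of $G$ there is a set $Z$ with $|V_i\cap Z|\leq 2g$ for every $i$ such that $G-Z$ is planar, and moreover $G-Z$ extends to a planar graph $G^+$ with a BFS-layering $(W_0,W_1,\ldots)$ satisfying $W_i\cap V(G)=V_i-Z$. Since $|Z|$ itself is unbounded in terms of $g$, one cannot give its vertices private colors; instead $Z$ is colored with pairs (layer index mod $p+1$, an index in $\set{1,\ldots,2g}$ distinguishing the at most $2g$ vertices of $Z$ in each layer), which costs $2g(p+1)$ colors and is the source of the $gp$ term. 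Two $Z$-vertices sharing a color then lie at least $p+1$ layers apart, so any connected subgraph containing both already sees $p+1$ distinct layer residues; the compatibility $W_i\cap V(G)=V_i-Z$ is what makes the layer-residue coordinate of the planar coloring of $G^+$ (applied with the layering $(W_0,W_1,\ldots)$ fixed, as in Theorem~\ref{thm-planar-technical}) agree with that of the $Z$-coloring, so this count of at least $p+1$ colors is valid across both parts. Your second and third steps (restricting a $p$-centered coloring and checking violators through $X$) are fine as stated, but they rest on the non-existent $\Oh(g)$ planarizing set, so the proposal as written does not establish the theorem.
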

The previous best known bound was $\Oh(g^2p^3+p^{19})$, see~\cite{PS19}.
Our result follows from the bound for planar graphs combined with 
a product structure theorem from~\cite{DJMMUW-queues}.

Graphs with bounded maximum degree are sparse but somehow 
much less structured than planar graphs, e.g. 
they allow any graph as a minor.
It was conjectured in~\cite{PS19}, that the number of colors
needed for $p$-centered colorings in the class of graphs of maximum degree $3$ 
is exponential in $p$.
This was supported by similar bounds for weak coloring numbers 
which is another family of parameters capturing the notion 
of bounded expansion. We disprove the conjecture by providing an upper
bound which is linear in~$p$.
\begin{theorem}\label{thm:bounded-degree}
Graphs with bounded degree admit $p$-centered colorings with $\Oh(p)$ colors.
More specifically, for every $p\geq1$ 
and every graph $G$ with maximum degree at most $\Delta$:
\[
\chi_p(G)\in \Oh\left(\Delta^{2-1/p} \cdot p\right).
\]
\end{theorem}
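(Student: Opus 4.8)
The plan is to use the entropy compression method (also known as the Moser–Tardos-style information-theoretic analysis). We color the vertices of $G$ one at a time in some fixed order, at each step picking a color uniformly at random from a palette of size $k := \lceil 2^{10}\cdot\Delta^{2-1/p}\cdot p\rceil$. Whenever we detect that the partial coloring constructed so far ``fails'' — that is, there is a connected subgraph $H$ on at most $p$ vertices (hence of small radius, since $\Delta$ is bounded) whose colors are all repeated — we invoke a local recoloring/backtracking step that erases the colors on $H$ and records in a log which bad event occurred. The key point is that each log entry can be encoded with few bits relative to the number of random choices it consumed: a bad connected subgraph through a fixed already-colored vertex is determined by $\Oh(\Delta^{\text{something}})$ choices but its $|H|\le p$ colors carry at most $|H|\cdot\log(|H|)$ bits of real information (since every color is repeated, the multiset of colors on $H$ is far from a uniform random string). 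If the log is too long, the process has used more randomness than it could possibly have produced, a contradiction; hence with positive probability the algorithm terminates with a valid $p$-centered coloring, and in fact runs in expected polynomial time, giving the algorithmic statement.

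The steps, in order, are as follows. (1) Reduce from ``$p$-centered'' to a finite family of \emph{forbidden configurations}: a coloring $\phi$ is $p$-centered iff for every connected $H$ with $|V(H)|\le p$, some color appears exactly once on $H$; since $\Delta$ is bounded, such an $H$ lies in the ball of radius $p$ around any of its vertices, so there are at most $n\cdot\Delta^{\Oh(p)}$ relevant subgraphs, each touching at most $p$ vertices. (2) Set up the randomized algorithm that processes vertices in order $v_1,\dots,v_n$, assigns $\phi(v_i)$ uniformly at random from $[k]$, and after assigning $\phi(v_i)$ checks whether $v_i$ now closes off a bad configuration $H$; if so, it uncolors all of $H$ and writes a record of $H$ to the log. (3) Design the log encoding so that the state of the algorithm at any time (the sequence of random bits consumed) can be reconstructed from (the current partial coloring) $+$ (the log); the crux of the encoding is that a bad $H$ that gets uncolored is specified relative to its ``root'' $v_i$ by a bounded-depth search tree (cost $\Oh(\Delta^{p})$ possibilities, i.e.\ $\Oh(p\log\Delta)$ bits) together with the list of colors on $H$, and because every color on $H$ is repeated, encoding that color list costs at most $\log\binom{|H|}{\text{distinct colors}} + (\#\text{distinct})\cdot\log k \le \tfrac{|H|}{2}\log k + \Oh(|H|\log|H|)$ bits rather than $|H|\log k$ bits. (4) Compare: if the algorithm runs for $T$ steps it consumes $\approx T\log k$ bits of randomness, but the reconstruction argument bounds the total information content of the run by $(\text{current coloring}) + (\text{log}) \le n\log k + T\cdot\big(\tfrac12\log k + \Oh(p\log\Delta + p\log p)\big)$; choosing $k$ of the stated order $\Delta^{2-1/p}p$ makes the per-step ``savings'' $\tfrac12\log k$ dominate the per-step ``cost'' $\Oh(p\log\Delta + p\log p)$, forcing $T = \Oh(n)$ with high probability.

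The main obstacle — and where the constant $2^{10}$ and the exponent $2-1/p$ are born — is step (3)–(4): getting the bookkeeping to balance. One must choose the right notion of bad configuration to uncdeclare (uncoloring the \emph{whole} $H$ is wasteful; one wants to uncolor just enough so that the recoloring is forced to change one of the repeated colors) and the right encoding of where $H$ sits relative to the root, so that the entropy the log must carry per uncoloring step is genuinely less than $\log k$. The exponent $2-1/p$ rather than $2$ reflects that a connected subgraph on $p$ vertices in a $\Delta$-regular graph has roughly $p\log\Delta$ bits of ``shape'' information but its colors, being all-repeated, save a $1/p$ fraction more than a naive bound; tuning the palette to $\Delta^{2-1/p}p$ is exactly what makes savings beat cost. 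Once the accounting closes, the standard entropy-compression conclusion (a run of length $\omega(n)$ would compress $n$ uniform strings below their entropy) yields both the existence of the coloring and the polynomial-time algorithm by derandomization or by a direct expected-running-time bound. I expect the verification of the inequality $\tfrac12\log k \ge C(p\log\Delta + p\log p)$ for the stated $k$ to be routine once the encoding is pinned down; the creative content is entirely in the choice of encoding.
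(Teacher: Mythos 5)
Your overall strategy (randomized greedy coloring plus an entropy-compression log, with the savings coming from the fact that a violating subgraph has all its colors repeated) is the same as the paper's, but two of your key steps are genuinely broken. First, your step (1) reduction is false: a coloring in which every connected subgraph on at most $p$ vertices has a unique color need not be $p$-centered, because a violating subgraph (connected, at most $p$ colors, no color unique) can have many more than $p$ vertices — already for $p=2$ a path colored $1,2,1,2$ is a violator on $4$ vertices while all subgraphs on at most $2$ vertices are fine. Consequently the whole framing "the bad $H$ lies in the ball of radius $p$ around its root, there are $n\cdot\Delta^{\Oh(p)}$ relevant subgraphs, and its at most $p$ vertices carry little color information" does not apply. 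The paper copes with violators of arbitrary size by uncoloring only $\min(|V(H)|,2p)$ vertices discovered along a spanning-tree walk of $H$ rooted at the newly colored vertex; encoding that walk costs roughly one edge-index ($\log\Delta$ bits) per uncolored vertex, which is exactly what keeps the log cost proportional to the amount of randomness being erased. (Erasing all of $H$ also works, but only yields $\Oh(\Delta^2 p)$ colors, as the paper remarks.)

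Second, your final balancing inequality cannot hold for the claimed palette size: with $k=\lceil 2^{10}\Delta^{2-1/p}p\rceil$ you have $\tfrac12\log k \approx (1-\tfrac{1}{2p})\log\Delta+\tfrac12\log p+5$, which is nowhere near $C(p\log\Delta+p\log p)$ once $p\ge 2$ and $\Delta$ is large; so the "routine verification" you defer would fail. The accounting must be done per \emph{uncolored vertex}, not per bad event: each uncolored vertex is charged $\Oh(1)$ bits for the walk directions, at most $(1-\tfrac{1}{2p})\log\Delta$ bits for edge indices (a tree on $m\le 2p$ vertices has only $m-1$ edges — this truncation at $2p$ is precisely where the exponent $2-1/p$ instead of $2$ is born, not a "$1/p$ fraction of extra savings from repeated colors"), and about $\tfrac12(\log k+\log p)$ bits for its color, the last using the paper's observation that at least half of the uncolored vertices carry a repeated color (the lists $B'$, $A$, $P$ with $d_i\ge c_i$), so repeats can be recorded as pointers into a list of at most $p$ distinct colors. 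Against the $\log k$ bits of erased randomness per vertex this forces $\tfrac12\log k \gtrsim (1-\tfrac{1}{2p})\log\Delta+\tfrac12\log p+\Oh(1)$, i.e.\ $k\ge 2^{\Oh(1)}\Delta^{2-1/p}p$, which is where the constant $2^{10}$ enters. So the missing ideas are concretely: handling violators of unbounded size via the truncated spanning-tree traversal, and the per-vertex (amortized) cost/savings ledger with the $d_i\ge c_i$ color-pointer trick; without them neither the reduction nor the arithmetic in your proposal goes through.
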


Dubois et al.~\cite{DJPPP20} recently proved an
almost matching lower bound. By analyzing an appropriate random graph
they show that there are graphs of maximum degree $\Delta$ that
require $\Omega(\Delta^{2-1/p}\cdot p\cdot \ln^{-1/p}\Delta)$ colors in any
$p$-centered coloring.

We prove Theorem~\ref{thm:bounded-degree} with an entropy compression type
argument.  The idea is to run a naïve randomized algorithm for coloring a
graph in the $p$-centered way.  Assuming that this algorithm fails on a long
run, for every possible evaluation of random experiments, we can compress a
random string of bits below the entropy bound. The contradiction shows that
there must be a sequence of bits which leads to a successful run.  The entropy
compression method is inspired by the algorithmic proof of the Lovász local
lemma by Moser and Tardos~\cite{MT10}.  An instructive overview of the method
can be found at Tao's blog~\cite{Tao09} or in a paper by Grytczuk, Kozik, and
Micek~\cite{GKM13} where nonrepetitive sequences over a finite alphabet are
constructed.  In the case $p=2$, i.e., for star chromatic number, our argument
matches the upper bound~$\Oh(\Delta^{1.5})$ by Fertin, Raspaud, and
Reed~\cite{FRR04}.

One of the strong results from~\cite{PS19} is that graphs excluding a fixed minor 
admit $p$-centered colorings with a polynomial in $p$ number of colors. 
The proof goes through the graph minor structure theorem by Robertson and Seymour~\cite{RS03}.
Grohe and Marx~\cite{GM15} extended the structure theorem to graphs avoiding a fixed topological minor.
In some sense, they showed that incorporating the bounded-degree graphs into the structural decomposition 
given in~\cite{RS03} is enough to obtain an approximate characterization for 
classes excluding a fixed topological minor.
For these reasons, we have been able to lift our result for bounded degree graphs and 
obtain the following general statement.

\begin{theorem}\label{thm:topological-minors-sugar}
For every graph $H$ there is a polynomial $f$ such that 
the graphs excluding~$H$ as a topological minor admit $p$-centered
colorings with at most $f(p)$ colors.
\end{theorem}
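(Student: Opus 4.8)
The plan is to combine the Grohe--Marx structure theorem for graphs excluding a fixed topological minor~\cite{GM15} with Theorems~\ref{thm:bounded-degree} and~\ref{thm:genusup}, imitating the scheme by which Pilipczuk and Siebertz~\cite{PS19} derived polynomial bounds for proper minor-closed classes from the graph minor structure theorem. Fix $H$ and let $G$ be a graph having no topological minor isomorphic to $H$. By the Grohe--Marx theorem there is a constant $a=a(H)$ (and an auxiliary degree bound $d=d(H)$) and a tree decomposition of $G$ of adhesion at most $a$ in which every torso is either (i) $a$-almost-embeddable in a surface of Euler genus bounded in terms of $H$, or (ii) has at most $a$ vertices of degree larger than $d$. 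It then suffices to show that each torso admits a $p$-centered coloring with a number of colors polynomial in $p$ (for $H$ fixed), and to glue these colorings along the decomposition.

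Two elementary facts drive the per-torso bound. First, \emph{deleting a bounded set of vertices costs only a bounded number of colors}: if $S\subseteq V(G')$ and $\phi$ is a $p$-centered coloring of $G'-S$, then extending $\phi$ by assigning the vertices of $S$ pairwise distinct fresh colors yields a $p$-centered coloring of $G'$, since every connected subgraph meeting $S$ already carries a uniquely colored vertex and every connected subgraph avoiding $S$ lies inside $G'-S$. Second, graphs of treewidth $t$ admit $p$-centered colorings with $\binom{p+t}{t}$ colors, which is polynomial in $p$ for $t$ fixed. Consequently a type-(ii) torso, after deleting its at most $a$ vertices of large degree, has maximum degree at most $d$, so Theorem~\ref{thm:bounded-degree} and the deletion fact give it a $p$-centered coloring with $\Oh_d(p)+a$ colors. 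A type-(i) torso arises from a graph embedded in a surface of bounded Euler genus by adding at most $a$ apex vertices and attaching at most $a$ vortices, each of pathwidth, hence treewidth, at most $a$; deleting the apices, coloring the embedded part via Theorem~\ref{thm:genusup}, coloring each vortex via the treewidth bound, and reconciling these colorings along the bounded-width vortex interfaces exactly as in~\cite{PS19}, yields a $p$-centered coloring with $p^{\Oh_H(1)}$ colors.

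It remains to combine the torso colorings, which is the engine behind the polynomial bounds of~\cite{PS19}: if $G$ has a tree decomposition of adhesion at most $a$ in which every torso admits a $p$-centered coloring with $N$ colors, then $G$ admits a $p$-centered coloring with a number of colors polynomial in $N$, of degree depending only on $a$. The mechanism is that every adhesion set is a clique in each incident torso and hence receives distinct colors there, which lets one stitch the local colorings together after enriching each vertex's color with a bounded amount of bookkeeping recording the colors present on the adhesion sets separating it from the root of the decomposition. Substituting $N=p^{\Oh_H(1)}$ and $a=a(H)$ yields $\chi_p(G)\le f(p)$ for a polynomial $f$ depending only on $H$; since all three ingredients (Grohe--Marx, the per-torso colorings, and the combination step) are constructive in polynomial time, so is the coloring.

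I expect the two \emph{gluing} steps to be the main obstacle: the reconciliation of the embedded part of an almost-embeddable torso with its vortices, and the combination of the torso colorings across the whole decomposition. In both, a connected subgraph of $G$ may weave through many bags, so the local colorings must be made globally consistent; this is handled, as in~\cite{PS19}, by the adhesion-set bookkeeping described above, and the genuinely new input is Theorem~\ref{thm:bounded-degree}, which is precisely what lets us handle the bounded-degree torsos that the minor structure theorem does not produce but the topological-minor structure theorem does.
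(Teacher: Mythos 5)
Your proposal is sound and follows the same skeleton as the paper: the Grohe--Marx decomposition, the new Theorem~\ref{thm:bounded-degree} to handle torsos that have bounded degree outside a bounded set of apex vertices (coloring those apices with fresh distinct colors, exactly as the paper does), and the bounded-adhesion lifting lemma of Pilipczuk and Siebertz (Lemma~\ref{lemma-lifting-through-bounded-adhesion}) to combine the torso colorings. The one place where you diverge is the treatment of the \emph{structured} torsos: you invoke the almost-embeddable formulation of the structure theorem and re-derive a polynomial bound by hand --- delete the apices, color the surface part via Theorem~\ref{thm:genusup}, color the vortices via the $\binom{p+t}{t}$ treewidth bound, and reconcile along the vortex boundaries ``as in~\cite{PS19}''. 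The paper instead uses the formulation recorded as Theorem~\ref{thm-structure-theorem}, in which these torsos simply exclude $K_{e(k)}$ as a minor, and then cites the Pilipczuk--Siebertz polynomial bound for minor-closed classes (Theorem~\ref{theorem-proper-minor-closed}) as a black box, so apices, genus and vortices never have to be opened up at all. Your route is legitimate --- the vortex reconciliation you defer to~\cite{PS19} is genuinely carried out there in their proof of the minor-free case --- but it is the least detailed part of your sketch and it amounts to re-proving a result you could cite wholesale; what it buys in exchange is an explicit use of the paper's own genus theorem and potentially more transparent control of the exponent, whereas the paper's choice of the clique-minor-free torso formulation buys a substantially shorter proof whose only genuinely new ingredient is the bounded-degree bound.
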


Prior to our work there were no non-trivial lower bounds for the maximum number of 
colors required in a $p$-centered coloring.
The lower bounds for planar graphs, graphs with bounded treewidth and even 
graphs excluding a fixed minor were only linear in $p$.
We present constructions forcing $\chi_p(G)$ to be superlinear in $p$.
\begin{theorem}\label{thm:tw-lower-bound}
For every $p\geq0$ and $t\geq 0$, there is a graph $G$ of treewidth at most $t$ with
\[
\chi_p(G) \geq \textstyle \binom{p+t}{t}.
\]
\end{theorem}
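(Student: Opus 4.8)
The plan is to construct, for every $p\geq0$ and $t\geq0$, a graph $G_{p,t}$ of treewidth at most $t$ such that every $p$-centered coloring of $G_{p,t}$ uses at least $\binom{p+t}{t}$ colors, proceeding by induction on $p+t$. The natural candidate is a recursively defined ``universal'' graph: let $G_{p,0}$ be a single vertex (so $\binom{p}{0}=1$ color is trivially needed), let $G_{0,t}$ be a single vertex as well, and for $p,t\geq1$ form $G_{p,t}$ by taking a new vertex $r$, a copy of $G_{p-1,t-1}$, and a large number $N$ of pairwise disjoint copies of $G_{p,t-1}$, and then joining $r$ by an edge to \emph{every} vertex of $G_{p-1,t-1}$ and of each copy of $G_{p,t-1}$. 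Adding an apex vertex raises treewidth by at most one, so an easy induction gives $\operatorname{tw}(G_{p,t})\leq t$. The parameter $N$ will be chosen large compared to the number of colors we are trying to rule out (a Ramsey/pigeonhole-style threshold), which is legitimate since we only need \emph{existence} of a bad graph.

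The heart of the argument is the inductive step showing $\chi_p(G_{p,t})\geq\binom{p+t}{t}$, using Pascal's identity $\binom{p+t}{t}=\binom{p+t-1}{t-1}+\binom{p+t-1}{t}$. Fix a $p$-centered coloring $\phi$ of $G_{p,t}$ with color set $C$, and let $c=\phi(r)$. First I would restrict attention to the copy of $G_{p-1,t-1}$: since $r$ is adjacent to all of it, in the subgraph induced by $\{r\}\cup V(G_{p-1,t-1})$ the color $c$ can only be ``centered'' on connected subgraphs avoiding $r$; on connected subgraphs containing $r$, the copy of $G_{p-1,t-1}$ must itself provide a unique-appearing color or exceed $p-1$ colors — in other words, $\phi$ restricted to that copy is a $(p-1)$-centered coloring of $G_{p-1,t-1}$ that moreover avoids using $c$ as a ``helping'' color in the relevant subgraphs. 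Making this precise should yield that $\phi$ uses at least $\binom{p+t-1}{t-1}$ colors \emph{other than} $c$ on the $G_{p-1,t-1}$-part. Second, for the $N$ copies of $G_{p,t-1}$: each copy, together with $r$, behaves like $G_{p,t-1}$ with an apex, and by induction each copy needs $\binom{p+t-1}{t}$ colors; by pigeonhole (this is where $N$ large is used) many copies use the ``same'' palette up to relabeling, and — crucially — because $r$ is adjacent to all of them, the color $c$ cannot be the unique color witnessing centeredness inside a single copy when that copy is extended by $r$. Combining, the copies contribute $\binom{p+t-1}{t}$ colors none of which can be $c$, and I would argue these are disjoint from the $\binom{p+t-1}{t-1}$ colors forced by the $G_{p-1,t-1}$-part — or, if they overlap, that the overlap is exactly accounted for so that together with $c$ itself we still reach $\binom{p+t-1}{t-1}+\binom{p+t-1}{t}$.

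The main obstacle, and the step requiring the most care, is the disjointness/accounting in the last sentence: a priori a color used in the $G_{p-1,t-1}$-part could be reused in the $G_{p,t-1}$-copies, so one cannot simply add the two bounds. The resolution I anticipate is to strengthen the induction hypothesis to something like: for \emph{every} vertex $v$ of $G_{p,t}$ there is a ``witnessing'' family of $\binom{p+t}{t}$ connected subgraphs, nested appropriately, on which the colors are forced and which can be made to contain $v$; then when we glue through the apex $r$, the nesting lets us merge the two sub-families through $r$ into one family of size $\binom{p+t-1}{t-1}+\binom{p+t-1}{t}$ while controlling exactly which colors repeat. Equivalently one can phrase the strengthened statement in terms of ``the colors on a root-to-somewhere path in the recursive structure are all distinct,'' turning the count into a statement about the depth and branching of the recursion tree. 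Once the right inductive invariant is isolated, the arithmetic is just Pascal's identity; choosing the invariant so that the apex-gluing step goes through cleanly is the real work.
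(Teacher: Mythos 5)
Your overall strategy (a recursive construction with apex-joins, a large number $N$ of copies to be handled by pigeonhole, and a strengthened induction hypothesis to control which colors repeat) is in the right spirit, but as written the inductive step cannot reach $\binom{p+t}{t}$, for two concrete reasons. First, the binomial bookkeeping is off: by your own induction hypothesis the copy of $G_{p-1,t-1}$ only forces $\binom{(p-1)+(t-1)}{t-1}=\binom{p+t-2}{t-1}$ colors (not $\binom{p+t-1}{t-1}$), and each copy of $G_{p,t-1}$ only forces $\binom{p+(t-1)}{t-1}=\binom{p+t-1}{t-1}$ colors (not $\binom{p+t-1}{t}$). Even granting perfect disjointness of the two color sets and adding the color of $r$, your recursion yields at most $\binom{p+t-2}{t-1}+\binom{p+t-1}{t-1}+1$, which is strictly below $\binom{p+t}{t}=\binom{p+t-1}{t-1}+\binom{p+t-1}{t}$ as soon as $\binom{p+t-2}{t}>1$. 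The root cause is structural: to keep treewidth at most $t$ with a single new global apex $r$, you were forced to downgrade the ``reduce $p$'' piece to treewidth $t-1$. The paper avoids this by not introducing a new apex at all: it takes $G^0\cong G_{(p-1,t,\cdot,\cdot)}$ (treewidth $t$, i.e.\ only $p$ is reduced) and attaches, at \emph{every} vertex $v$ of $G^0$, many disjoint copies of the treewidth-$(t-1)$ graph $G_{(p,t-1,\cdot,\cdot)}$ completely joined to $v$; adding $v$ to the bags of each copy keeps width $t$, and Pascal's identity $\binom{p+t}{t}=\binom{(p-1)+t}{t}+\binom{p+(t-1)}{t-1}$ then closes.

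Second, the step you yourself flag as ``the real work'' is exactly where the paper's effort lies, and your sketch does not supply it. Your claim that $\phi$ restricted to the $G_{p-1,t-1}$-copy is $(p-1)$-centered is false as stated: for a connected $H$ inside the copy with at most $p-1$ colors and no unique color, $H\cup\{r\}$ may be saved by $r$'s color $c$ being unique on it, so all you get is that such an $H$ must avoid $c$ --- a much weaker relative statement that your accounting does not exploit. The paper replaces the naive ``number of colors forced'' induction by the invariant that either the coloring uses at least $N$ colors or the graph contains a $(p,\binom{p+t}{t},x)$-\emph{brushed} $x$-ary tree: its levels are monochromatic with pairwise distinct colors, and short index subsequences of every root-to-leaf path induce genuine paths in $G$. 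This path-realizability is what lets one exhibit, for any attempted reuse of a color from the $G^0$-part inside an attached copy, an explicit path with at most $p$ colors and no unique color (contradicting even $p$-linearity), thereby proving the color-disjointness you only hope for; and the ``at least $N$ colors or\dots'' dichotomy is what legitimizes the pigeonhole over color sequences ($X=(x-1)N^{M}+1$ copies at each vertex, filtered bottom-up to an $x$-ary subtree with a common sequence). Without an invariant of this kind, and with the construction corrected so that the ``reduce $p$'' piece keeps treewidth $t$, your argument does not go through as proposed.
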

This lower bound is sharp as Pilipczuk and Siebertz~\cite{PS19} showed 
that for every $p\geq1$, every $t\geq1$,
every graph $G$ of treewidth at most $t$ has $\chi_p(G)\leq \binom{p+t}{t}$.
In fact, we show a slightly stronger statement than Theorem~\ref{thm:tw-lower-bound} 
concerning a relaxation of $p$-centered colorings, 
namely the $p$-linear colorings introduced in~\cite{KBPS18}.
See the precise statement in Section~\ref{sec-lower-bounds-bounded-tw-linear}.

Finally, we present asymptotically tight lower and upper bounds for
the maximum $p$-centered chromatic numbers of outerplanar graphs 
and planar graphs of treewidth at most $3$, i.e., subgraphs of stacked triangulations (see \cite{KV12} for more details).
In particular Theorem \ref{thm:pb+p-bounds}.\ref{enu-stackedtri-low} implies the best known lower bound
$\Omega(p^2\log p)$ for the maximum $p$-centered chromatic number
of planar graphs.

\begin{theorem}\label{thm:pb+p-bounds}\hfill
\begin{enumerate}[label=\textup{(\roman*)},nosep]
\item Outerplanar graphs admit $p$-centered colorings with $\Oh(p\log p)$ colors.
\label{enu-outerplanar-up}
\item There is a family of outerplanar graphs that requires $\Omega(p\log p)$ colors in any $p$-centered coloring.
\label{enu-outerplanar-low}
\item Planar graphs of treewidth at most $3$, 
i.e., subgraphs of stacked triangulations, 
admit $p$-centered colorings with $\Oh(p^2\log p)$ colors.
\label{enu-stackedtri-up}
\item There is a family of planar graphs of treewidth $3$ that requires $\Omega(p^2\log p)$ colors in any $p$-centered coloring.
\label{enu-stackedtri-low}
\end{enumerate}
\label{thm:planarclasses}
\end{theorem}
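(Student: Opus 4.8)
The plan is to treat the four parts as two matched pairs: the upper bounds \ref{enu-outerplanar-up} and \ref{enu-stackedtri-up}, which I would obtain from a single recursive decomposition that exploits the tree‑like structure of the two classes, and the lower bounds \ref{enu-outerplanar-low} and \ref{enu-stackedtri-low}, which I would obtain from explicit recursive families. Throughout we may assume the graphs are connected, since a $p$-centered coloring of a disconnected graph is the union of $p$-centered colorings of its components over a common palette. The structural input is that a maximal $2$-connected outerplanar graph (a triangulated polygon) carries a subcubic \emph{weak dual tree}, and that a region cut out by a diagonal is attached to the rest of the graph only through the two endpoints of that diagonal; a stacked triangulation carries an analogous tree of separators, but of size three rather than two. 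Thus \ref{enu-stackedtri-up} and \ref{enu-stackedtri-low} are the "one dimension up'' analogues of \ref{enu-outerplanar-up} and \ref{enu-outerplanar-low}, and the extra factor of $p$ comes from the separators growing from $2$ to $3$.

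For \ref{enu-outerplanar-up} I would first reduce to triangulated polygons (handle the block decomposition; passing to a maximal outerplanar supergraph does not decrease $\chi_p$). Then recursively decompose the polygon along \emph{centroid separators}: a node of the weak dual tree whose removal leaves pieces of at most half the size, so its $\le 3$ vertices form a separator and each remaining piece sits behind a single diagonal, hence has a boundary of bounded size. Iterating gives a hierarchy of regions of depth $\Oh(\log n)$, each with bounded boundary. The coloring is defined from the depth $d$ at which a vertex first enters a separator — but, crucially, not by giving every depth its own palette, which would cost $\Oh(\log n)$ and be unbounded in $n$. Instead the color of a depth‑$d$ separator vertex combines (a) a bounded amount of local data (its index among the $\le 3$ separator vertices and which side it lies on) with (b) an encoding of $d$ that merges a "ruler/bisection'' labelling truncated to $\Oh(\log p)$ values with a reduction of $d$ modulo $\Oh(p)$. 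The role of (b) is to handle simultaneously the two ways a connected subgraph $H$ with at most $p$ colors can sit in the hierarchy: if $H$ meets only $\Oh(p)$ consecutive depths, the modular part already separates its topmost depth from everything below it; if $H$ reaches far down the hierarchy, the ruler part forces a uniquely occurring value on $H$ or forces $H$ to use more than $p$ colors. Balancing the truncation level against the modulus gives $\Oh(p\log p)$ colors. Running the same scheme with the size‑$3$ separators of a stacked triangulation (alternatively, via a layering reduction to outerplanar pieces plus \ref{enu-outerplanar-up}) gives \ref{enu-stackedtri-up} with one more factor of $p$, i.e.\ $\Oh(p^2\log p)$.

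For the lower bounds I would build families recursively so that each of $\Omega(\log p)$ scales is forced to spend $\Omega(p)$ colors that cannot be recycled from the other scales. The base object is a long path (for \ref{enu-stackedtri-low}, a long strip triangulation, which contains a long path as a subgraph): with at most $p$ colors a $p$-centered coloring of a long path must have a uniquely occurring color on \emph{every} subpath, and an elimination‑tree / binary‑search argument shows this needs $\Omega(\log n)$, hence more than $p$ colors once $n$ is large, so the path already forces $\Omega(p)$ colors. To reach $\Omega(p\log p)$ in the outerplanar case one glues scaled copies of the previous construction along the edges of such a path — this stays outerplanar because one glues an outerplanar graph along a boundary edge — and proves by induction that any $p$-centered coloring restricted to a copy still behaves like a $p$-centered coloring of that copy, while a connected subgraph threading through all the nested levels certifies that $\Omega(p)$ fresh colors are required at the new level; with $\Theta(\log p)$ nesting levels this yields $\Omega(p\log p)$. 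For \ref{enu-stackedtri-low} the same recursion is run one dimension up, replacing the path of triangles by a stacking pattern, which multiplies the bound by $\Theta(p)$.

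The main obstacle is the combination step in the upper bound: designing the depth‑encoding (b) so that the resulting coloring is valid for \emph{all} connected subgraphs — in particular for those that are joined through a single high shared boundary vertex and therefore span many depths while using few colors — while still using only $\Oh(p\log p)$ (respectively $\Oh(p^2\log p)$) colors. On the lower‑bound side the delicate point is the inductive "no recycling'' argument: certifying that the colors spent on the deeper scales genuinely cannot reappear on the shallower scales, which is what turns a per‑scale cost of $\Omega(p)$ into a global $\Omega(p\log p)$.
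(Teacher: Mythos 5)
There is a genuine gap on both sides. For the upper bounds, your entire scheme hinges on the depth-encoding (b), which you leave unspecified, and the failure mode you yourself flag is fatal to any encoding of the kind you describe: in a centroid/separator hierarchy of the weak dual, a connected subgraph can be joined through one shared boundary vertex (e.g.\ the apex of a fan, adjacent to vertices of essentially all depths), so the set of depths it meets need not be an interval. The modular coordinate $d \bmod \Oh(p)$ therefore gives nothing, and a ruler function of $d$ truncated to $\Oh(\log p)$ values cannot force a unique color either, because the adversarial subgraph can pick, at each of up to $p$ pairwise far-apart depths, two vertices sharing a color; with only $\Oh(1)$ local data per vertex available, nothing ties the color choice at a deep region to the actual colors used on its ancestors. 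This is precisely where the paper's proof does something different in kind: it works with BFS layers (where connectivity \emph{does} force the layers met by $H$ to be consecutive, so the mod-$(p+1)$ coordinate is sound), shows each layer of a maximal outerplanar graph is a linear forest, colors each layer path periodically with $p+1$ colors, and makes the palette of each path depend on the colors of its ancestor shadows through an in-order-labelled binary tree of colors, forbidding only $\Oh(\log p)$ colors per ancestor layer; this ancestor-aware forbidden-set mechanism, not a function of depth alone, is what makes the verification go through. (Your alternative route to \ref{enu-stackedtri-up} — a layering reduction to outerplanar pieces — is essentially the paper's: BFS layers of a chordal completion drop the simple treewidth by one, costing a factor $p+1$.)

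For the lower bounds, your plan ($\Theta(\log p)$ nesting scales, each forcing $\Omega(p)$ fresh colors) is the transpose of the paper's (a tree of fans of depth $\Theta(p)$, where each of the $\Theta(p)$ spine steps is shown to contribute $\Omega(\log p)$ colors that deeper caterpillars provably miss), and the step you defer — the ``no recycling'' induction — is the whole difficulty, not a technicality. Note in particular that a connected subgraph threading through all $\Theta(\log p)$ scales would use far more than $p$ colors, so $p$-centeredness imposes no direct constraint on it; you would have to extract, as the paper does, small witnesses (there: subpaths with no unique color obtained by a cleaning step, iterated halving producing $\Omega(\log p)$ disjoint color sets per step, and caterpillars of depth at most roughly $p/2$ whose color count stays under $p$) to convert local repetition into a global count. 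Also, gluing copies along edges of a path does not obviously keep the construction's forcing power: proving that the restriction of a $p$-centered coloring to a deep copy still ``behaves like'' a $p$-centered coloring of that copy, with the shallow colors unavailable, is exactly the claim that needs an argument and is where the paper invests most of its work.
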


It turns out that the lower bounds~\ref{enu-outerplanar-low} and~\ref{enu-stackedtri-low}
and the upper bounds ~\ref{enu-outerplanar-up} and~\ref{enu-stackedtri-up} of Theorem~\ref{thm:pb+p-bounds} are initial steps of a more general result stated in Theorem~\ref{thm:simpletree}.
The statement of the theorem involves the notion of
\emph{simple treewidth} which will be formally introduced in
Subsection~\ref{subsec:simpletree-upper}, for the moment we just remark that
graphs of simple treewidth at most 2 are outerplanar graphs and graphs of
simple treewidth at most 3 are planar graphs of treewidth at most 3.

\begin{theorem}\label{thm:simpletree}\hfill
  \begin{enumerate}[label=\textup{(\roman*)},nosep]
  \item \label{enu-simpletree-up}
    Graphs of simple treewidth at most $k$ admit $p$-centered colorings
    with $\Oh(p^{k-1}\log p)$ colors.
  \item \label{enu-simpletree-low}
    There is a family of graphs of simple treewidth $k$ that requires
    $\Omega(p^{k-1}\log p)$ colors in any $p$-centered coloring.
  \end{enumerate} 
\end{theorem}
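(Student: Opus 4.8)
The plan is to prove both parts by lifting the arguments behind Theorem~\ref{thm:pb+p-bounds}.\ref{enu-outerplanar-up} and~\ref{enu-stackedtri-up} (the cases $k=2$ and $k=3$) to arbitrary $k$, working throughout with \emph{simple $k$-trees}. For part~\ref{enu-simpletree-up} we may assume $G$ is edge-maximal of simple treewidth $k$, i.e.\ a simple $k$-tree, and fix a rooted clique tree $\mathcal{T}$ of $G$: its nodes are the maximal $(k{+}1)$-cliques, and each non-root vertex is \emph{introduced} by the node created when it is stacked onto a $k$-clique. First I would take a balanced recursive decomposition of $\mathcal{T}$ by clique separators of size at most $k$; this has logarithmic depth $D$ and assigns to every vertex $v$ a \emph{level} $\ell(v)\in\{0,\dots,D\}$ together with a \emph{slot}, the position of $v$ among the at most $k$ vertices of the separating clique at its level. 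The coloring then records two things: a \emph{summary of the level} taking $\Oh(\log p)$ values, and the \emph{slot together with a bounded counter} taking $\Oh(p^{k-1})$ values, for a product of $\Oh(p^{k-1}\log p)$. The slot part imitates the treewidth coloring that gives $\binom{p+k}{k}$ colors, but it is one factor of $p$ cheaper: the simplicity condition forces every non-root separating clique to share all but one of its vertices with material already placed at strictly smaller levels, so only $k-1$ of its slots are genuinely free and cost a factor of $p$. The level part is where the $\log p$ enters: the $p$-centered condition never requires recovering $\ell(v)$ exactly, only that along any root-to-leaf chain of the recursion every window of $p$ consecutive levels be resolved, which a ruler-type sequence achieves with $\Oh(\log p)$ symbols.

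For the verification take a connected subgraph $H$ and let $\ell^*$ be its minimum level; by connectedness the vertices of $H$ at level $\ell^*$ form a subset of a single separating clique $S^*$. If $H$ touches many recursion levels, then a chain in $H$ meeting those levels is already forced to use more than $p$ colours, since the level summary and the counter between them vary enough along a chain; so there is nothing to prove. If $H$ is confined to few levels, the ruler structure of the level summary singles out a vertex whose colour cannot recur within $H$, and the slot-plus-counter coordinate (as in the treewidth bound, but with one fewer free coordinate) is arranged so that that colour occurs exactly once on $H$. Making this case analysis precise --- pinning down ``few'' versus ``many'' and how the ruler and the counter interact --- is the technical heart of the upper bound. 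Everything is constructive, so the coloring is computed in polynomial time, and the cases $k\le 3$ of Theorem~\ref{thm:pb+p-bounds} can serve as base cases.

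For the lower bound~\ref{enu-simpletree-low} I would, for each $p$, build a simple $k$-tree $G_p$ by nesting to depth $h=\lceil\log_2 p\rceil$. A single \emph{layer} is a simple $k$-tree on which every $p$-centered coloring must spend $\Omega(p^{k-1})$ colors: for $k=2$ a long triangulated cycle already forces $\Omega(p)$ colors because it contains long paths, and for larger $k$ one uses a higher-dimensional analogue --- a stacked-triangulation version of the long cycle, and so on --- that remains of simple treewidth $k$. Each layer carries many \emph{sites}, each a $k$-clique on its outer part, and into every site we glue a fresh copy of the $(h{-}1)$-fold nested construction along that clique; gluing along an outer $k$-clique keeps the result a simple $k$-tree. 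The analysis is an induction on $h$: picking a site together with a connected piece of its layer on at most $p-1$ vertices that passes through the site, one shows that the $p$-centered condition applied to that piece together with a suitable connected part of the glued copy forces the colours used inside the copy to avoid the at most $p-1$ colours on the piece; iterating through the $h$ levels makes the $\Omega(p^{k-1})$-size palettes of successive layers pairwise disjoint, whence $\chi_p(G_p)=\Omega(p^{k-1}h)=\Omega(p^{k-1}\log p)$. The delicate point here is to keep the simple treewidth equal to $k$ while forcing these palettes to be pairwise \emph{disjoint} rather than merely large --- forcing them only to be large would reproduce the treewidth lower bound $\binom{p+k}{k}$ and lose the essential $\log p$ factor.
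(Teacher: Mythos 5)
Your proposal departs from the paper on both halves, but in each case it stops at the step that actually carries the proof. For the upper bound, the decisive problem is your fallback case ``if $H$ touches many recursion levels, then a chain in $H$ is already forced to use more than $p$ colours.'' Levels coming from a balanced (clique-)separator decomposition do not behave like BFS layers: an edge may join a very deep vertex directly to a vertex of a shallow separator, so a connected subgraph can meet levels $0$ and $\ell$ while skipping every level in between. Hence neither the $\Oh(\log p)$-valued ruler coordinate nor an unspecified bounded counter forces $p+1$ colours on such a subgraph, and the ruler's window property gives no uniquely occurring symbol either, because the set of levels met by $H$ need not be an interval. This contiguity is exactly what the paper buys by working with BFS-layerings of a chordal completion: connected subgraphs meet a contiguous block of layers, so the layer index modulo $p+1$ disposes of the spread-out case, Lemma~\ref{lem:stw_layering} says each layer has simple treewidth at most $k-1$ (giving the recursion $f_{k+1}(p)\leq(p+1)f_k(p)$), and shadow completeness (Corollary~\ref{Cor:shadow}) makes the bottom-layer restriction of $H$ connected; the $\log p$ factor lives only in the outerplanar base case. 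In your sketch the slot-plus-counter coordinate with $\Oh(p^{k-1})$ values, and its interaction with the ruler, is never defined --- you call it the technical heart yourself --- so part~\ref{enu-simpletree-up} is not established.

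For the lower bound, splitting the target as ($\log p$ nesting levels) $\times$ (pairwise disjoint palettes of size $\Omega(p^{k-1})$) runs into a structural obstacle: $p$-centeredness only constrains connected subgraphs carrying at most $p$ colours, so no single witness can force a glued copy to avoid an entire outer palette of size $\Omega(p^{k-1})\gg p$. You would need, for each of $\Omega(p^{k-1})$ outer colours separately, a connected certificate through the gluing site using at most $p-1$ colours, all repeated, and containing that colour --- and neither your ``stacked-triangulation long cycle'' layers nor the gluing is shown to provide such certificates; this is precisely the delicate point you flag and leave open. (That a single layer of simple treewidth $k$ needs $\Omega(p^{k-1})$ colours can at least be salvaged from Theorem~\ref{thm:tw-lower-bound} with $t=k-1$, since $\stw\leq\tw+1$, but that does not repair the disjointness step.) The paper's argument distributes the bound the other way around: in $G_k(w,d)$ it builds, after a cleaning step, a master caterpillar whose spine has $h^{k-1}=\Theta(p^{k-1})$ steps, and an iterated-halving argument shows each step contributes at least $\lfloor\log s\rfloor$ colours missed by all deeper color-collecting caterpillars, so the colour sets are nested and shrink by $\lfloor\log s\rfloor$ per step rather than forming disjoint blocks of size $\Omega(p^{k-1})$. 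As it stands, both parts of your proposal have genuine gaps at their critical steps.
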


\subsection{Paper Overview}  In
Section~\ref{sec-application-of-the-structure-thm}, we present a new structure
theorem for planar graphs from~\cite{DJMMUW-queues} and show how to apply it
to get good bounds for $\chi_p(G)$ when $G$ is planar or when $G$ has bounded
genus.  In Section~\ref{sec-bounded-degree}, we set up an entropy compression
argument to prove an $\Oh(p)$ upper bound for graphs with bounded degree.  In
Section~\ref{sec-lifting}, we lift the previous result to obtain a polynomial
bound for graphs avoiding a fixed graph as a topological minor.  In
Sections~\ref{sec-upper-bounds-subplanars} and
\ref{sec-lower-bounds-subplanars}, we give the upper bounds and lower bounds,
respectively, for graphs of bounded simple treewidth.
Section~\ref{sec-lower-bounds-bounded-tw-linear} is devoted to the lower bound
for graphs of bounded treewidth.

Following the presentation of each upper bound, we give a short justification 
why the proof gives a polynomial time algorithm computing the actual coloring.

\section{Upper bounds for planar graphs: An application of the product
structure theorem}
\label{sec-product-structure}\label{sec-application-of-the-structure-thm}

This section is devoted to the proof of our upper bounds for planar graphs and
graphs with bounded genus. The proof is based on results for stacked
triangulations, in particular Theorem~\ref{thm:pb+p-bounds}\ref{enu-stackedtri-up}.

We need some additional notation: By $G\boxtimes H$ we denote the graph with
vertex set $V(G)\times V(H)$ which has an edge between $(u,w)$ and $(u',w')$
if $uu'\in E(G)$ and $ww'\in E(H)$, or $u=u'$ and $ww'\in E(H)$, or $uu'\in E(G)$ and $w=w'$.

Let $G^p$ be the $p$th power of $G$, i.e., a pair $uw$ is an edge of $G^p$ whenever the distance
between $u$ and $w$ in $G$ is at most $p$. Note that any proper coloring of $G^p$  is a $p$-centered 
coloring of $G$, because any two vertices of the same color have distance at least $p+1$ in~$G$
and the $p+1$ first vertices on a path from one of these vertices to the other form a clique in
$G^p$, thus they receive pairwise different colors. Actually, this shows that these colorings admit an
even stronger condition than $p$-centered colorings: \emph{Any connected subgraph~$H$ in such a
coloring either contains more than $p$ colors or it contains every color at most once}.
We will use this property at the end of the proof of the following lemma.

\begin{lemma}
$$\chi_p(H_1\boxtimes H_2)\leq \chi_p(H_1)\cdot \chi(H_2^p) $$
\end{lemma}
\begin{proof}
  Let $\psi_1$ be a $p$-centered coloring of $H_1$ and $\psi_2$ be a proper
  coloring of $H_2^p$.  With~$\pi_1$ and $\pi_2$ we denote the the projections
  from $V(H_1)\times V(H_2)$ to its components.  We claim that the product
  coloring $\phi(v,w)=(\psi_1(v),\psi_2(w))$ is a $p$-centered coloring of
  $G=H_1 \boxtimes H_2$. Consider a connected subgraph $G'$ of $G$.  From the
  definition of $\boxtimes$ it follows that $H_1':=\pi_1(G')$ is
  connected. Since $\psi_1$ is a $p$-centered coloring of $H_1$, it either uses more
  than $p$ colors on $H_1'$ or there is a vertex $\hat{v}\in H_1'$, whose
  color is unique in $H_1'$. In the first case, $\phi$ uses more than $p$
  colors on $G'$ and we are done. In the second case consider a vertex
  $(\hat{v},w)\in G'$. The color of $(\hat{v},w)$ is unique in $G'$ unless
  there is a $w'\in H_2$ with $w\neq w'$, $\psi_2(w)=\psi_2(w')$, and $(\hat{v},w')\in G'$.
  If such a vertex $w'$ exists, then let $P$ be a path in~$G'$ from $(\hat{v},w)$ to
  $(\hat{v},w')$. Since not all colors in $\pi_2(P)$ are unique and $\pi_2(P)$ is connected in $H_2$, 
  $\psi_2$ uses more
  than $p$ colors on $\pi_2(P)$, whence $\phi$ uses more than $p$ colors on
  $P\subset G'$.
\end{proof}

Let $G+H$ denote the complete join of the graphs $G$ and $H$, this is the
graph whose vertex set is the disjoint union $V(G) \overset{\cdot}{\cup} V(H)$
the edge set contains $E(G)$ and $E(H)$ together all possible edges with one
end in $V(G)$ and one in $V(H)$.  The following theorem is due to
Dujmovi\'c et al.\ \cite{DJMMUW-queues}. 

\begin{theorem}[\cite{DJMMUW-queues}, Theorem 37]
\label{thm:genusGproduct}
Every graph of Euler genus $g$ is a subgraph of:
\begin{enumerate}
\item\label{gGp-first}
  $H\boxtimes P \boxtimes K_{\max(2g,3)}$ for some apex graph $H$ of
  treewidth at most $4$ and a path $P$.
\item\label{gGp-second}
  $(K_{2g}+H_8)\boxtimes P$ for some planar graph $H_8$ of treewidth at
  most $8$ and a path~$P$.
\end{enumerate}
\end{theorem}

The proof given in  \cite{DJMMUW-queues} actually reveals that
the graph $H$ in Theorem~\ref{thm:genusGproduct}(\ref{gGp-first})
is an apex graph over a stacked triangulation, i.e., removing the apex 
leaves a planar graph $H'$ with treewidth at most~3. Hence, with
Theorem~\ref{thm:pb+p-bounds}\ref{enu-stackedtri-up} we get
\begin{equation}
\chi_p(H) \leq \chi_p(H')+1 \in  \Oh(p^2 \log p). \tag{$*$}\label{Hprime}
\end{equation}

\begin{corollary}\label{cor:p+g}
For any planar graph $G$ and any graph $G_g$ of Euler genus $g$, we have:
\begin{align*}
\chi_p(G)&\in \Oh(p^3 \log p),\\
\chi_p(G_g)&\in \Oh(g p^3\log p),\\
\chi_p(G_g)&\in \Oh(gp + p^4\log p).
\end{align*}
\end{corollary}
\begin{proof}
  We begin with three easy observations.
  The $p$th power of the complete graph is just the complete graph, in
  particular $\chi(K_n^p)=\chi(K_n)=n$. A periodic coloring of a path $P$ with
  $p+1$ colors shows that $\chi(P^p)=p+1$. Furthermore, the additon of an apex
  to a graph can increase its $p$-centered chromatic number by at most~1. With
  Theorem~\ref{thm:genusGproduct} it now follows that
\begin{align*}
\chi_p(G)&\leq \chi_p(H\boxtimes P \boxtimes K_3)\leq \chi_p(H\boxtimes P) \cdot \chi(K_3^p)\leq \chi_p(H) \cdot \chi(P^p)\cdot 3 \in \Oh(p^3 \log p)\\
\chi_p(G_g)&\leq \chi_p(H\boxtimes P \boxtimes K_{2g+3})\leq \chi_p(H) \cdot \chi(P^p)\cdot (2g+3) \in \Oh(g p^3\log p)\\
\chi_p(G_g)&\leq \chi_p(K_{2g}+H_8) \cdot \chi(P^p)\leq (2g+\chi_p(H_8))\cdot (p+1)\in \Oh(gp + p^4\log p)
\end{align*}
For the final inequality of the first two lines we have used (\ref{Hprime}).
In the last line, we used that $H_8$ is planar and therefore by the first
line has a $p$-centered coloring using at most $\Oh(p^3\log p)$ colors.
\end{proof}

A coloring procedure which follows the proof of
Theorem~\ref{thm:genusGproduct} more closely can be used to show that graphs
of Euler genus $g$ actually admit $p$-centered colorings with
$\Oh(gp+p^3\log p)$ colors.  The proof for this slightly improved bound has
been detailed in the conference version of this paper~\cite{DFMS20}.  There it
is also indicated how the proofs can be turned into quadratic time algorithms
for colorings which respect the given bounds for the number of colors. The
bottleneck of the coloring algorithm is the computation of the product
structure.  Recently Morin~\cite{Morin21} has shown that a
$(K_{2g}+H_8)\boxtimes P$ structure can be constructed in~$\Oh(n\log n)$
time , this yields a corresponding improvement in the running time of the coloring
algorithm.

\section{Upper bound for bounded degree:
   The entropy compression method}
\label{sec-bounded-degree}
\def\uncol{\diamondsuit}
\def\LOG{\textsc{Log}}
\def\pos{\textrm{pos}}
\def\colv{\gamma}

In this section we prove Theorem~\ref{thm:bounded-degree}.

Let $p$ be a positive integer and let $G$ be a graph with maximum degree at
most $\Delta$.  Fix an arbitrary ordering $\Pi=(v_1,\ldots,v_n)$ of $V(G)$.
For every vertex $v\in V(G)$, fix an arbitrary ordering of the edges adjacent
to $v$.  Let $c=\left\lceil2^{10}\cdot\Delta^{2-1/p}\cdot p\right\rceil$ and
let $M$ be a sufficiently large integer divisible by $2p$.

A \emph{partial coloring} of $G$ is a function $f: V(G)\to C\cup\set{\uncol}$ 
where $C$ is a set of \textrm{colors} used by~$f$ and~$\uncol$ is an extra value 
indicating that no color is assigned.
A vertex $v$ with $f(v)\in C$ is said to be \emph{colored} by~$f$ 
while a vertex $v$ with $f(v)=\uncol$ is \emph{uncolored} by $f$.
A connected subgraph $H$ of $G$ is a \emph{violator} with respect to 
a partial coloring $f$ if:
\begin{enumerate}
\item all vertices in $H$ are colored by $f$;
\item $f$ uses at most $p$ colors on $H$; and
\item no color is unique in $H$ under $f$.
\end{enumerate}

Algorithm~\ref{algo-naive} below is a naïve randomized algorithm 
which tries to color $G$ in a $p$-centered fashion with $c$ colors.
The algorithm starts with all vertices uncolored.
In every step, the first uncolored vertex from $\Pi$ is colored with 
a random color from $\set{1,\ldots,c}$.
If this new assignment creates a violator $H$, then 
$\min(|V(H)|,2p)$ vertices in $H$ are uncolored, 
including the vertex colored in this step.
(One could erase the colors of all the vertices in $H$ and get a $\Oh(\Delta^2p)$ bound; 
we present this slightly more involved version to get $\Oh(\Delta^{2-1/p}p)$.)

To trade probabilistic arguments for counting we
replace random sampling by a prescribed sequence $X=(x_1,\ldots,x_M)$ of colors,
where $x_i\in [c]$ for every $i\in[M]$,
and take $X$ as part of the input of the algorithm,
see Algorithm~\ref{algo-naive}.

\begin{algorithm}
\caption{Input: $X=(x_1,\ldots,x_M)$ with $x_i\in\set{1,\ldots,c}$}
\label{algo-naive}
\begin{algorithmic}[1]
\State $f(v) \gets \uncol$, for every $v$ in $V(G)$
\For{$i = 1,2,\ldots, M$}\label{step_mainLoop}
  \If{all vertices of $G$ are colored under $f$}\label{step_breakingIf}
    \State\label{step_output} \textbf{exit}
  \EndIf
  \State\label{step_TakeUncoloredVertex} let $w_i$
                      be the first uncolored vertex in $\Pi$
  \State\label{step_assignColor} $f(w_i) \gets x_i$
  \If{there exists a violator of $f$}\label{step_mainIf}
  \begingroup
  \addtolength{\jot}{-.25em}
  \begin{align*}
  H_i &\gets \textrm{a violator of $f$}\\
  T_i &\gets \textrm{a spanning tree of $H_i$ rooted at $w_i$}\\
  W_i &\gets \textrm{a walk of $T_i$ that starts and ends in $w_i$, and traverses each}\\
  &\qquad\textrm{edge of $T_i$ twice}\\
  f(v) &\gets\uncol,\ \textrm{for every $v$ among the first $\min(|V(H_i)|,2p)$ distinct}\\
  &\qquad\textrm{vertices along $W_i$}
  \end{align*}
  \endgroup
  \EndIf
\EndFor
\State \textbf{return} $f$
\end{algorithmic}
\end{algorithm}

If the algorithm breaks the loop at line~\ref{step_output} and outputs
a coloring $f$, then $f$ is a $p$-centered coloring of~$G$ using at
most $c$ colors. Indeed, we keep an invariant that after each
iteration there is no violator with respect to $f$.  This is obviously
true before the first iteration.  Now within step $i$, the newly
colored vertex~$w_i$ could create violators, but each of them must
contain $w_i$. When the algorithm uncolors some vertices in a single
violator, it always uncolors $w_i$ itself.
So no violators remain.

We define a structure called $\LOG(X)$ which accumulates data during the
execution of the algorithm.  We will show that unless input $X$ leads to a
break we can reconstruct $X$ from $\LOG(X)$, i.e., the map $X \mapsto \LOG(X)$
is injective. On the other hand, we will show that $\LOG(X)$ takes strictly
less than $c^M$ values.  This is a contradiction as there is no injective map
from a set of size~$c^M$ into a smaller one.  Thus there has to be an input
which leads to a break (line~\ref{step_output}), i.e., to a valid coloring.

We continue to define $\LOG$ and to show how to reconstruct $X$ from $\LOG(X)$
when the algorithm with input $X$ does not break at line~\ref{step_output}.

The data in $\LOG(X)$ is a tuple $(Z,\Sigma,\Gamma,F)$,
the individual components of  $\LOG(X)$ are described in the following.

In $Z=(z_0,z_1,\ldots,z_M)$ we record the number
of colored vertices during the process.
Specifically,
$z_i$ is the number of colored vertices after 
finishing the $i$-th step of the loop.
Note that $z_0=0$, $z_1=1$ and for~$i>1$
either $z_i = z_{i-1}+1$ or $z_i = z_{i-1}+1 - \min(h_i,2p)$, where
$h_i = |V(H_i)|$ is the number of vertices of the 
violator $H_i$. 

In $\Sigma$ we collect information about the violator subgraphs $H$
which get partially uncolored at some iteration of the loop.  

Suppose that in the $i$-th step of the algorithm a subgraph $H_i$ 
is recognized as a violator.
Let $T_i$ and $W_i$ be the structures fixed by the algorithm after line~\ref{step_mainIf},
so $T_i$ is a spanning tree of $H_i$ rooted at $w_i$ and 
$W_i=(u_0,u_1,\ldots,u_{2h_i-2})$ is a walk in $G$ traversing~$T_i$ such that 
the walk starts and ends at $w_i$, i.e., $u_0=u_{2h_i-2}=w_i$, and
each edge of~$T_i$ is traversed twice.
Let $W_i'$ be the walk in $G$ going along $W_i$ until~$W_i$ has visited
$m_i=\min(|V(H_i)|,2p)$ distinct vertices and after that $W_i'$
takes the direct path in $T_i$ back to the root $w_i$ where it ends.
If $|V(H_i)| \leq 2p$, then $W_i'= W_i$, otherwise $W_i'$ is a walk
travering a subtree $T_i'$ of $T_i$ with $2p$ vertices. In either case
the traversed tree has $m_i-1$ edges, whence $W'_i$ has length $2m_i-2$.
Let $(u'_0,u'_1,\ldots,u'_{2m_i-2})$ be the ordered list of vertices
visited by $W_i'$.
For $j\in\set{0,\ldots,2m_i-3}$, $u'_ju'_{j+1}$ is a \emph{forward step} 
if $u'_j$ is the parent of $u'_{j+1}$ in $T_i$.
Otherwise, $u'_{j+1}$ is the parent of $u'_j$ in $T_i$ and $u'_ju'_{j+1}$ 
is a \emph{backward step}.

We encode $W'_i$ in two lists.  The binary list
$B_i=(b_0,b_1,b_2,\ldots,b_{2m_i-3})$ records which steps of $W'_i$ are
forward and which are backward, i.e., $b_j=1$ if $u'_ju'_{j+1}$ is a forward
step and $b_j=0$ if $u'_ju'_{j+1}$ is backward.  Clearly, $m_i-1$ steps are
forward and $m_i-1$ steps are backward.  The list
$L_i=(\ell_1,\ldots,\ell_{m_i-1})$ with $\ell_k\in\set{1,\ldots,\Delta}$
collects data about the forward steps of $W'_i$.  If $u'_ju'_{j+1}$ is the
$k$-th forward step in $W'_i$, then $\ell_k$ is the unique number such that
$u'_ju'_{j+1}$ is the $\ell_k$-th edge in the fixed ordering of edges adjacent
to~$u'_j$.

Initially $\Sigma=(B,L)$ consists of two empty lists, and whenever
it comes to uncoloring a violator $H_i$ in the course of the algorithm
then $B_i$ is appended to $B$ and $L_i$ is appended to $L$.

In $\Gamma$ we collect information about the colors of the vertices
of violator subgraphs at the moment they are uncolored within the iteration process.

Again, suppose that in the $i$-th step of the algorithm a
subgraph~$H_i$ with $h_i=|V(H_i)|$ is partially uncolored, and let
$m_i=\min(h_i,2p)$.  Let $W'_i$ be the traversal of a subgraph of
$H_i$ fixed in the definitions of $B_i$ and $L_i$.  Let
$V_i=(v_1',\ldots,v_{m_i}')$ be the vertices of $W'_i$ sorted by their
first-appearance in $W'_i$.  Let $B'_i=(b_1',\ldots,b_{m_i}')$ be a
binary sequence such that $b_i'=1$ if
$f(v_i')\not\in\set{f(v_1'),\ldots,f(v_{i-1}')}$ and $b_i'=0$
otherwise.  Let $c_i=\sum b_j'$ that is $c_i$ is the number of colors
used by~$f$ at~$W'_i$ before it is uncolored.  Since
$V(W'_i)\subseteq V(H_i)$ and $H_i$ is a violator we have
$c_i \leq p$.  Let $d_i = m_i - c_i$.  We claim that $d_i\geq c_i$.
Indeed, if $h_i > 2p$ then $d_i=m_i-c_i = 2p-c_i \geq p$.  Otherwise,
$m_i=|V(H_i)|$ and since $H_i$ is a violator every color in $H_i$ is
used at least twice so $d_i =h_i - c_i\geq c_i$ as well.

Let
$V_i^{1} = (v^1_1,\ldots,v^1_{c_i})$ be the restriction of $V_i$ to
those indices $j$ with $b'_j=1$.  Let
$V_i^{0} = (v^0_1,\ldots,v^0_{d_i})$ be the restriction of $V_i$ to
those indices $j$ with $b'_j=0$.  Finally, we put
$A_i=(f(v^1_1),\ldots,f(v^1_{c_i}))$ and $P_i=(p_1,\ldots,p_{d_i})$
where $p_j$ is chosen so $f(v^0_j) = f(v^1_{p_j})$.

\def\oAP{{}^*\kern-3ptAP}
Initially $(B',A,P)$ consists of three empty lists, and whenever
it comes to uncoloring a violator $H_i$ in the course of the algorithm
then $B'_i$ is appended to $B'$, $A_i$ is appended to $A$, and
$P_i$ is appended to $P$. Finally, we  merge $A$ and $P$ into 
a single list $\oAP$ by reversing $A$ and appending $P$ 
(only for technical reasons to simplify the counting). 
This will yield $\Gamma=(B',\oAP)$.
\medskip

The last component $F$ of $\LOG(X)$ consists of the partial coloring
$f$ of $G$ when the algorithm stops, i.e., after the $M$ iterations of
the loop. More explicitly, $F = (f(v_1),f(v_2),\ldots,f(v_n))$ where 
the coloring~$f$ is taken from the output.

For the counting it will be convenient to have lists of length independent 
of $X$. Let~$\colv$ be the number of colored vertices in $F$, 
i.e., the number of $v_i$ with $f(v_i)\neq\uncol$.
Note that $M-\sum m_i = \colv$. 
We append $\colv$ entries to the lists $B'$, $\oAP$ from $\Gamma$
and $2\colv+2v$ entries to $B$ from $\Sigma$, 
where $v$ is the total number of violators.
Now $|B| = 2M$, $|B'|=M$, and $|\oAP|=M$.
The length of $L$ is $\sum (m_i-1) \leq (M - \gamma)\left(1-\frac{1}{2p}\right) \leq \left(1-\frac{1}{2p}\right)M$ and
we append elements to $L$ to make the length of
$L$ equal to this value.

\begin{klaim}
The mapping $X \to \LOG(X)$ is injective.
\end{klaim}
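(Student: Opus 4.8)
The plan is to show injectivity by exhibiting an explicit decoding procedure: given $\LOG(X) = (Z,\Sigma,\Gamma,F)$, reconstruct the entire run of Algorithm~\ref{algo-naive} on input $X$ step by step, and in particular recover each color $x_i$. I would argue by induction on the loop index $i$, maintaining the invariant that after processing the $i$-th step of the decoder we know: the partial coloring $f$ at the end of step $i$, the identity of $w_i$, the value $x_i$, and which part of each list in $\Sigma$ and $\Gamma$ has been consumed.

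The key steps, in order. First, from $Z=(z_0,\ldots,z_M)$ the decoder learns for each $i$ whether step $i$ was a ``plain coloring'' step ($z_i = z_{i-1}+1$) or an ``uncoloring'' step, and in the latter case it recovers $m_i = \min(h_i,2p) = z_{i-1}+1-z_i$. Second, knowing the current $f$ (initially all $\uncol$) and the fixed ordering $\Pi$, the decoder identifies $w_i$ as the first uncolored vertex, exactly as the algorithm does. Third, the decoder must determine $x_i = f(w_i)$ as assigned in line~\ref{step_assignColor}. If step $i$ is plain, then $w_i$ stays colored forever after (the invariant plus the fact that only uncoloring steps remove colors, and a colored vertex can only be uncolored in a later violator step, but then it reappears as some $w_j$ --- this needs care), so $x_i$ can be read off from $F$ by tracking which later steps touched $w_i$; more cleanly, one argues that the color a vertex carries is stable until the next time it is uncolored, so $x_i$ is the color of $w_i$ either in $F$ or at the moment of its next uncoloring, which is recorded in $\Gamma$. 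If step $i$ is an uncoloring step, then $x_i = f(w_i)$ where $w_i$ is the root of $W'_i$, and this color is part of the data $A_i$ inside $\oAP$ (since $w_i = v'_1$ is the first vertex and trivially introduces a new color, so $b'_1=1$). Fourth, and this is the heart of the reconstruction, in an uncoloring step the decoder replays the violator: it reads $m_i-1$ from the partition of $B$, reads the forward/backward pattern $B_i$ and the edge-labels $L_i$ from $\Sigma$, and thereby walks $W'_i$ through $G$ starting from $w_i$ --- forward steps use $L_i$ to pick the next edge, backward steps return to the parent, which is determined by the walk history. This recovers the vertex set $V(W'_i)=\{v'_1,\ldots,v'_{m_i}\}$ and their first-appearance order. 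Then $B'_i$ from $\Gamma$ tells which of these vertices held a ``new'' color, $A_i$ gives those colors, and $P_i$ says, for each ``repeat'' vertex, which earlier new color it equals; together these reconstruct $f$ restricted to $V(W'_i)$ just before the uncoloring, hence in particular all colors that are about to be erased. The decoder then performs the uncoloring (erasing the first $m_i$ distinct vertices of $W'_i$) to obtain $f$ at the end of step $i$, completing the induction.

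The main obstacle I expect is the bookkeeping needed to make step three rigorous --- pinning down exactly when and how the color $x_i$ of $w_i$ becomes recoverable. The subtlety is that $w_i$ may be uncolored again at some later step $j > i$, and between $i$ and $j$ its color is untouched; so $x_i$ equals the color of $w_i$ recorded either in $F$ (if never re-uncolored) or in the $\Gamma$-data of the first such step $j$. One must verify that the decoder, processing steps in order, always reaches the point where it learns $x_i$ --- either because step $i$ is itself an uncoloring step (so $x_i \in A_i$), or because a later uncoloring step $j$ has $w_i \in V(W'_j)$ and the decoder, having already reconstructed $f$ on $V(W'_j)$ at step $j$, can read off the color $w_i$ had, which is still $x_i$. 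Here it is important that a plain-coloring step $i$ with $w_i$ never re-uncolored leaves $f(w_i)=x_i$ in the final output $F$; and a careful statement of the invariant handles the case where $w_i$ is uncolored at step $j$ but the decoder has, by the time it finishes step $j$, full knowledge of $f$ on $V(W'_j)\ni w_i$ as it stood just before the erasure. Once this dependency between steps is organized correctly --- essentially, $x_i$ is resolved no later than step $\min(M, \text{first step after }i\text{ that uncolors }w_i)$ --- the rest is a routine, if lengthy, verification that every list in $\LOG(X)$ is consumed in lockstep with the algorithm and that nothing in the reconstruction depends on $X$ except through $\LOG(X)$.
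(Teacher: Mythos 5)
Your decoding argument is correct and in substance it is the paper's proof: both establish injectivity by exhibiting a decoder that replays the run from $(Z,\Sigma,\Gamma,F)$, using $Z$ to classify each step and extract $m_i$, the current set of uncolored vertices to identify $w_i$, the forward/backward pattern $B_i$ together with the edge indices $L_i$ to re-trace the walk $W'_i$ (this part of your reconstruction is color-free, exactly as in the paper's ``forward phase''), and $B'_i$, $A_i$, $P_i$ to recover the colors erased at a violator step, with $x_i$ appearing as the first entry of $A_i$ since $w_i$ is the first vertex of $W'_i$. The only real difference is organizational, and it concerns precisely the point you flag as your main obstacle: recovering $x_i$ for a plain coloring step. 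You resolve it forward with deferral ($x_i$ is read off either from $F$ or from the $\Gamma$-data of the first later step that uncolors $w_i$), which works but forces you to track, for each pending step, the next uncoloring event of its vertex. The paper instead runs the color recovery as a separate \emph{backward} phase: starting from $f_M=F$ it reconstructs $f_{i-1}$ from $f_i$, so at step $i$ the color of $w_i$ is always already present in the current coloring (restored, in the violator case, from $A_i$ and $P_i$), and $x_i=f_i(w_i)$ falls out immediately with no deferred bookkeeping. So your route is valid and proves the claim; the backward pass is simply the cleaner way to organize the part you found delicate, and it is also what makes the consumption of the merged $A$/$P$ list (read from its two ends) straightforward.
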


\begin{proof}
  We will show how to reconstruct the original input $X$
  from the value of $\LOG(X)$.
  The reconstruction is done in two
  phases. In the forward phase we reconstruct:
  \begin{enumerate}[label=\textup{(\roman*)},nosep]
  \item $(w_1,w_2,\ldots,w_M)$ where $w_i$ is the vertex colored with $x_i$
  at line~\ref{step_assignColor} in the $i$-th iteration;
  \item $(U_0,U_1,\ldots,U_M)$ where $U_i$ is the set of uncolored vertices 
  after the $i$-th iteration.
  \item $W'_i$ for every $i$ such that the algorithm encounters a violator in the $i$-th step, 
  where $W'_i$ is the traversal of the subgraph of $H_i$ with all uncolored vertices, 
  fixed in the definition of $\LOG(X)$.
  \end{enumerate} 
  Note first that $U_0=V(G)$.
  For the two lists in
  $\Sigma \in \LOG(X)$ we initialize $\pos(B)=0$ and $\pos(L)=0$.
  The invariant is that when $U_{i-1}$ has been computed $\pos(B)$ and
  $\pos(L)$ mark the tail of the lists constructed up to the end of 
  the $(i-1)$-st iteration of the loop.

  Let $i\geq1$ and suppose that $U_{i-1}$ is determined.
  Then the vertex $w_i$ is identified as the 
  first element of~$U_{i-1}$ in ~$\Pi$.
  Now, if $z_{i} = z_{i-1} +1$ then 
  there was no violator in the $i$-th iteration step and we simply have 
  $U_i = U_{i-1} - \set{w_i}$.
  
  If $z_i \leq z_{i-1}$, then the coloring of $w_i$ led to a violator $H_i$
  whose vertices were partially uncolored. The number of uncolored vertices of $H_i$ 
  can be identified as $m_i=z_{i-1} +1 - z_i$. 
  Knowing $m_i$ we can separate the blocks 
  of the lists $B$ and $L$ corresponding to the subgraph $H_i$.
  Let $B_i=(b_0,b_1,b_2,\ldots,b_{2m_i-3})$ be the block of $2m_i-2$
  entries of $B$ starting from the current position $\pos(B)$ and advance
  $\pos(B) \gets \pos(B)+2m_i-2$. 
  Let $L_i=(\ell_1,\ldots,\ell_{m_i-1})$ be the block of $m_i-1$ entries
  of $L$ starting from the current position $\pos(L)$ and advance
  $\pos(L) \gets \pos(L)+m_i-1$. 
  Having identified $w_i$, $B_i$, and $L_i$ we can reconstruct 
  the traversal $W'_i=(u'_0,u'_1,u'_2,\ldots,u'_{2m_i-2})$ that was fixed 
  computing the value of $\LOG(X)$.
  Indeed, $u'_0=w_i$. We keep track of the number $k$ of the next forward step to come, so we start with $k=1$.
  When we have identified the vertex $u'_j$, for $j\geq0$, then the value
  $b_j$ indicates whether the next step is forward or backward. 
  If the step is forward, then we know that the $u'_ju'_{j+1}$ edge is 
  the $\ell_{k}$-th edge in the fixed ordering of edges adjacent to~$u'_j$.
  Therefore, we identified $u'_{j+1}$ and we increment $k$.
  If the step is backward, then we identify $u'_{j+1}$ 
  from the discovered so far subtree of $T_i$.
  When the sequence $W'_i=(u'_0,u'_1,u'_2,\ldots,u'_{2m_i-2})$ is reconstructed, 
  in particular, we have identified all the uncolored vertices of $H_i$.
  Finally, $U_i = U_{i-1} \cup \set{u'_0,u'_1,u'_2,\ldots,u'_{2m_i-2}}$.
  This completes the description of the forward phase.  
  
  In the backward phase we reconstruct:
  \begin{enumerate}[label=\textup{(\roman*)},nosep]
    \item $(f_M,f_{M-1},\ldots,f_1)$ where $f_i$ is the partial coloring $f$ 
    after the $i$-th iteration; and 
    \item $(x_M,x_{M-1},\ldots,x_1)$.
  \end{enumerate}
  Note that $f_M=F$ is a part of the evaluation of $\LOG(X)$.
  In particular, the number $\gamma$ of vertices colored after the whole process is known.
  For the two lists in $\Gamma \in \LOG(X)$
  we initialize $\pos(B') = M-\colv$, $\pos(A)=0$, and
  $\pos(P)=M  - \colv$.
  The invariant is that
  when $f_i$ has been computed these functions mark the
  tail of the lists after the $(i-1)$-st iteration of the
  loop. In order to mark the tail of list $A$, we mark the head of the respective part of $\oAP$, because $A$ is reversed there. The tail of $P$ and $\oAP$ coincide except for the $\colv$ added elements.

  Let $1<i\leq M$ and suppose that $f_i$ has been identified.
  Now, if $z_{i} = z_{i-1} +1$ then 
  there was no violator in the $i$-th iteration step and we simply have 
  $x_i = f_i(w_i)$ and $f_{i-1}$ is obtained from $f_i$ by uncoloring $w_i$.

  If $z_i \leq z_{i-1}$, then the coloring of $w_i$ led to a violator
  $H_i$.  From the forward phase we know the traversal
  $W'_i=(u'_0,u'_1,u'_2,\ldots,u'_{2m_i-2})$ with $u'_0=w_i$ of a
  spanning tree of the subgraph of $H_i$ uncolored in the $i$th
  step. Let $V_i=(v_1',u_2',\ldots,v_{m_i}')$ be the uncolored vertices of~$H_i$
  sorted by their first-appearance in $W'_i$.  From $B'$ take the
  block $B'_i=(b_1',\ldots,b_{m_i}')$ of length $m_i$ preceding the
  current position.  The sequence~$B'_i$ guides us how to identify the
  split of $V_i$ into $V_i^{1} = (v^1_1,\ldots,v^1_{c_i})$ and
  $V_i^{0} = (v^0_1,\ldots,v^0_{d_i})$.  They are simply the
  restrictions of $V_i$ to those indices $j$ with $b'_j=1$ and
  $b'_j=0$, respectively.  As we see above, from the lengths of the
  identified sequences we can reconstruct the values of $c_i$ and
  $d_i$, i.e., $c_i=|V_i^1|$ and~$d_i=|V_i^0|$.  Now we can identify
  the next block $A_i= (\alpha_1,\ldots,\alpha_{c_i})$ of length~$c_i$
  in $\oAP$ by reversing the next $c_i$ elements after $\pos(A)$ and
  the block $P_i=(p_1,\ldots,p_{d_i})$ of size $d_i$ preceding the
  position $\pos(P)$ in~$\oAP$.  The list~$A_i$ contains the colors of
  vertices in~$V^1_i$, namely, $f_{i-1}(v^1_j) = \alpha_j$.  For
  vertices in $V_i^{0}$ we use the list~$P_i$ and have
  $f_{i-1}(v^0_j) = \alpha_{p_{j}}$.  Hence, we finally get
  $x_i=\alpha_1$ and after resetting $f_{i-1}(w_i)=\uncol$ we have
  reconstructed the coloring $f_{i-1}$.  It only remains to update
  $\pos(B') \gets \pos(B')-m_i$, $\pos(A) \gets \pos(A)+c_i$, and
  $\pos(P) \gets \pos(P)-d_i$.  This completes the description of the
  backward phase and the proof of the claim.
\end{proof}

We now come to the final task in the proof, we estimate the
size of the image of the $\LOG$-function.
Recall that $\LOG(X) = (Z,\Sigma,\Gamma,F)$, we will independently
estimate the number of values each of the four components can take.

Recall that $Z=(z_0,z_1,\ldots,z_M)$.  Based on $Z$, we build an
auxiliary sequence $S_Z = (s_1,\ldots,s_{2M})$ where each $s_i$ is
either $+1$ or $-1$.  The sequence is obtained from left to right. Set
$s_1=z_1-z_0=1$.  Assume that $z_0,\ldots z_j$ have been processed. If
$z_{j+1}-z_j=1$, then append $+1$ to $S_Z$ and if
$z_{j+1}-z_j=-r \leq 0$ append a single $+1$ and a sequence of~$r+1$
entries of $-1$ to $S_Z$, finally add a sequence of $\colv$ entries of
$-1$. Note that replacing~$Z$ by $S_Z$ in $\LOG(X)$ would not change
the encoded information. Indeed knowing $\colv$ from $F\in\LOG(X)$ the
final sequence can be removed and $z_i$ is obtained as the sum of the
$i$-th $+1$ entry and the block of $-1$ entries following it in the
truncated~$S_Z$.

In $S_Z$ the number of $+1$ entries and the number of $-1$
entries are exactly $M$ and
every prefix of $S_Z$ has a non-negative sum.  These
sequences are known as Dyck-paths and counted
by the Catalan number~$C_M$. It is well-known that 
$C_M$ is in $o(4^{M})$.

For $\Sigma$ and $\Gamma$ we consider the data accumulated
with a violator $H_i$ and its $m_i$ uncolored vertices.
In $\Sigma=(B,L)$ we add a binary list $B_i$ of length $2m_i-2$
and a list $L_i$ of length~$m_i-1$ with entries in $\set{1,\ldots,\Delta}$.
In $\Gamma=(B',\oAP)$ we add a binary list $B'_i$ of length~$m_i$,
a list $A_i$ of length $c_i$ with entries in $\set{1,\ldots,c}$,
and a list $P_i$ of length $d_i$ with entries in
$\set{1,\ldots,p}$. From the properties of a violator we have
$c_i+d_i=m_i$ and $d_i\geq c_i$.

Since we know the lengths of these lists, we can upper bound the
possibilities for~$B$,~$L$, and~$B'$ respectively by $2^{2M}$,
$\Delta^{\left(1-\frac{1}{2p}\right)M}$, and $2^M$. Lists $A$ and $P$
together have $M$ entries and for each $i$ we have $d_i\geq
c_i$. Since $c > p$ we only overcount when assuming that $A$ and $P$
are of the same length, therefore, the number of possibilities for
$\oAP$ is bounded by $c^{M/2}p^{M/2}$.

For the final coloring $F$ there are $(c+1)^n$ options.

This way we obtain an upper bound on the size of the image of the $\LOG$-function 
$$o(4^{M}) \cdot 2^{2M} \Delta^{\left(1-\frac{1}{2p}\right)M} 2^M c^{M/2}p^{M/2}(c+1)^n.$$
Since $\LOG$ is injective we get
\begin{align*}
o(4^{M}) \cdot 2^{2M} \Delta^{\left(1-\frac{1}{2p}\right)M} 2^M c^{M/2}p^{M/2}\cdot(c+1)^n &\geq c^M,\\
o(32^{M})\cdot \Delta^{\left(1-\frac{1}{2p}\right)M} p^{M/2}\cdot (c+1)^n &\geq c^{M/2},\\
o\left(\left(2^{10}\cdot\Delta^{2-1/p}\cdot p\right)^M\right)\cdot (c+1)^{2n} &\geq c^M.
\end{align*}

Since we have chosen $c=\left\lceil2^{10}\Delta^{2-1/p}p\right\rceil$, 
we see that for large enough
$M$ the inequality above cannot hold.
Therefore, for some input $X$ the algorithm outputs a
$p$-centered coloring of $G$ using $c$ colors.
This completes the proof. 

\medskip

Let $c=\left\lceil2^{10}\Delta^{2-1/p}p\right\rceil$. 
If the naïve randomized coloring algorithm
is allowed to use $2c$ colors it succeeds in finding a valid coloring
in expected $\leq 2n\log(2c+1) + 4$ iterations.
Indeed, let $M=2n\log(2c+1)$ and consider the probability that
after $M+t$ iterations the algorithm did not break. This probability
is upper bounded by
$\frac{c^{(M+t)/2}(2c+1)^{n}}{(2c)^{(M+t)/2}}\leq\frac{1}{2^{t/2}}$,
whence the expected number of iterations is upper bounded by
$M + \sum \frac{1}{2^{t/2}} < M+4$. 

A single iteration step can be done in $\Oh(n)$ time.  Indeed, the most time
consuming thing to do is to check whether after coloring $w_i$
there is a violator. This can be done by iterating over all subsets
$C\subset\set{1,\ldots,c}$ of size at most $p$.  For each such $C$, we can
determine in $\Oh(n)$ time if the component of the subgraph spanned by
vertices with colors from $C$ that contains $w_i$ is a violator.  Since the
number of such subsets is a function of $c$ and $p$ only, i.e., of $\Delta$
and $p$, we conclude that a single iteration can be done in~$\Oh(n)$ time.

Therefore, the expected runtime of the randomized algorithm is $\Oh(n^2)$.

\section{Upper bound for graphs excluding a fixed topological minor: 
Lifting through the structure theorem}
\label{sec-lifting}

In this section we prove Theorem~\ref{thm:topological-minors-sugar}.
We start with an informal statement of the structure theorem by Grohe
and Marx~\cite{GM15}.  For every graph $H$, every graph excluding~$H$
as a topological minor has a tree-decomposition with bounded adhesion
such that every torso either has bounded degree with the exception
of a bounded number of vertices, or excludes a fixed graph as a
minor.  Furthermore, such a decomposition for a graph~$G$ can be
computed in time $f(H)\cdot|V(G)|^{\Oh(1)}$ for some computable
function~$f$.

We proceed with all the necessary definitions.

A graph $H$ is a \emph{minor} of a graph $G$ if $H$ can be obtained from $G$ by 
deleting vertices, deleting edges, and contracting edges. 
A graph $H$ is a \emph{topological minor} of a graph $G$ if $H$ can be obtained 
from $G$ by deleting vertices, deleting edges, and contracting edges with 
at least one endpoint of degree $2$.

A \emph{tree-decomposition} of a graph $G$ is a pair $(T,\set{B_t}_{t\in V(T)})$ 
where $T$ is a tree and the sets $B_t$ $(t\in V(T))$ are subsets of $V(G)$ 
called \emph{bags} satisfying
\begin{enumerate}[nosep]
\item for each edge $uv\in E(G)$ there is a bag containing both $u$ and $v$, and
\item for each vertex $v\in V(G)$ the set of vertices $t\in V(T)$ with 
$v\in B_t$  induces a non-empty subtree of $T$.
\end{enumerate}
The \emph{treewidth} of a graph $G$ is the minimum $k$ such that
$G$ has a tree-decomposition with all bags of size at most $k+1$.
An \emph{adhesion set} in the tree-decomposition $(T,\set{B_t}_{t\in V(T)})$ 
of $G$ is a set $B_t \cap B_{t'}$ where $tt'\in E(T)$.
The \emph{adhesion} of a tree-decomposition is the maximum size of an adhesion set.
The \emph{torso} of a bag $B_t$ is the graph obtained from~$G[B_t]$ by adding all 
the edges between vertices in every adhesion set $B_t \cap B_{t'}$
where $tt'\in E(T)$.

We are ready to present the precise statement of the structure theorem for graphs avoiding 
a fixed graph as a topological minor.
\begin{theorem}[\cite{GM15}]\label{thm-structure-theorem}
For every integer $k\geq1$, there exist $a(k)$, $c(k)$, 
$d(k)$, and $e(k)$ such that the following holds:
If $H$ is a graph on $k$ vertices and 
$G$ avoids $H$ as a topological minor,
then $G$ has a tree-decomposition $(T,\set{B_t}_{t\in V(T)})$ with 
adhesion at most $a(k)$ such that for every $t\in V(T)$:
\begin{enumerate}[label=\textup{(\roman*)},nosep]
\item the torso of $B_t$ has at most $c(k)$ vertices of degree larger than $d(k)$, or
\item the torso of $B_t$ avoids $K_{e(k)}$ as a minor.
\end{enumerate}
\end{theorem}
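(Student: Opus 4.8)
The plan is to follow the route of Grohe and Marx. First reduce to excluded complete graphs: put $k=|V(H)|$; since $H$ is a subgraph of $K_k$, every subdivision of $K_k$ contains a subdivision of $H$, so if $K_k$ is a topological minor of a graph $G$, then so is $H$. Contrapositively, every graph excluding $H$ as a topological minor also excludes $K_k$, so it suffices to prove the theorem for $H=K_k$, after which the constants $a(k),c(k),d(k),e(k)$ depend on $k$ only. The rest of the argument builds the tree-decomposition recursively by splitting along separators of bounded order and then analyses the indivisible (torso) pieces with two classical tools: the density theorem for topological minors and the duality between highly linked sets and tree-decompositions.

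For the recursion, recall the Bollob\'as--Thomason / Koml\'os--Szemer\'edi theorem: there is an absolute constant $c_0$ such that every graph of average degree at least $c_0k^2$ contains $K_k$ as a topological minor; in particular $G$ and all its subgraphs are $\Oh(k^2)$-degenerate. Fix thresholds $a(k)$ and $d(k)$ to be determined, both of order $k^2$ and large relative to the constant in the linkage theorem below, and call a vertex \emph{heavy} if its degree exceeds $d(k)$. Decompose $G$ top-down: maintain a piece $G'$ together with a distinguished set $S$ with $|S|<a(k)$ that must stay inside a single bag; as long as $G'$ has a separation $(A,B)$ of order $<a(k)$ with $S\subseteq A$ and both $A\setminus B$ and $B\setminus A$ large, split $G'$ along $A\cap B$, create a bag $S\cup(A\cap B)$, add a clique on $A\cap B$ to each side, and recurse on both sides with new distinguished set $A\cap B$. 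Since $G$ is finite and each split strictly shrinks the pieces the recursion terminates, and since all adhesion sets are among the chosen separators the adhesion is at most $a(k)$.

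Now let $T$ be a torso at which the recursion stops. If $|V(T)|<e(k)$ then $T$ trivially excludes $K_{e(k)}$ as a minor and alternative (ii) holds. Otherwise $T$ has no balanced separation of order $<a(k)$, so by bramble/tangle--tree-decomposition duality it carries a tangle (equivalently, a well-linked set) of order $a(k)$, and then by the linkage theorem of Robertson and Seymour $T$ is connected enough that any prescribed linkage of $\binom k2$ pairs of terminals is realisable by internally disjoint paths. Suppose, for contradiction, that $T$ has at least $k$ heavy vertices $x_1,\dots,x_k$ and take them as branch vertices. Since each $x_i$ has more than $d(k)$ neighbours, choose for every ordered pair $(i,j)$ a private neighbour of $x_i$ outside $\{x_1,\dots,x_k\}$ and distinct from all neighbours chosen before; delete $x_1,\dots,x_k$ from $T$ (dropping the order of the linkedness by at most $k$) and route $\binom k2$ internally disjoint paths joining, for each $i<j$, the chosen neighbour of $x_i$ to the chosen neighbour of $x_j$. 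Together with the $x_i$ and the chosen first edges this is a subdivision of $K_k$ inside $T\subseteq G$, a contradiction. Hence $T$ has fewer than $k$ heavy vertices and alternative (i) holds with $c(k)=k$; fixing $a,c,d,e$ consistently with the inequalities used above completes the proof.

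The main obstacle is the recursion: one has to guarantee that the indivisible torsos are \emph{either} genuinely small \emph{or} highly linked, carrying a tangle of the full order $a(k)$, while keeping the adhesion uniformly bounded, and one must check that the added adhesion cliques end up inside the bags; this is precisely the content of the canonical tangle (or bramble) tree-decomposition. A second delicate point is the linkage argument, where the $\binom k2$ paths must avoid the branch vertices and the distinguished set, and where one must first convert the tangle of $T$ into the connectivity actually required by the linkage theorem; this forces $a(k)$ and $d(k)$ to be chosen in the right order relative to the absolute constant in that theorem. In the proof of Grohe and Marx these steps are handled through the Robertson--Seymour graph-minor structure theorem applied to suitably reduced graphs, which is also why the constants $a(k),c(k),d(k),e(k)$ are only claimed to be computable; we do not reproduce that part of the argument and refer to~\cite{GM15}.
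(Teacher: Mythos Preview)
The paper does not prove this theorem at all: it is quoted as a black box from Grohe and Marx~\cite{GM15} and used in Section~\ref{sec-lifting}. So there is no ``paper's own proof'' to compare against; the relevant question is whether your sketch is a correct outline of the Grohe--Marx argument.

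It is not, and the gap is essential. Your dichotomy for an indivisible torso $T$ is ``either $|V(T)|<e(k)$, so (ii) holds trivially, or $T$ carries a tangle of order $a(k)$ and then (i) holds''. But alternative (ii) in the theorem is not there to cover small torsos; it is there to cover \emph{large} torsos that exclude a fixed minor. Take $T$ to be a large $3$-connected planar triangulation with many vertices of high degree (stack many vertices into many faces). Such a $T$ excludes $K_5$ topologically, has no small balanced separator, hence carries tangles of arbitrarily large order, yet has arbitrarily many vertices of degree exceeding any fixed $d(k)$. Your argument would then attempt to build a $K_k$-subdivision in $T$, which is impossible.

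The specific step that fails is ``by the linkage theorem of Robertson and Seymour $T$ is connected enough that any prescribed linkage of $\binom{k}{2}$ pairs is realisable''. A tangle of large order does \emph{not} yield $\binom{k}{2}$-linkedness: planar grids have tangles of arbitrary order but are not even $3$-linked. Linkedness needs high vertex-connectivity (Thomas--Wollan, Bollob\'as--Thomason), and torsos excluding a minor can have bounded connectivity. This is exactly why Grohe and Marx must invoke the full Robertson--Seymour structure theorem for excluded minors to handle those torsos, producing alternative (ii) in the nontrivial form stated. Your final paragraph alludes to this, but the body of your argument is written as if the linkage route suffices on its own; it does not.
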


To handle tree-decompositions of bounded adhesion we use 
the following lemma from~\cite{PS19}.
We say that a tree-decomposition $(T,\set{B_t}_{t\in V(T)})$ of a graph $G$ is \emph{over 
a class of graphs} $\calC$ if for every $t\in V(T)$ 
the torso of $B_t$ lies in $\calC$.

\begin{lemma}[\cite{PS19}]\label{lemma-lifting-through-bounded-adhesion}
  Let $\calC$ be a class of graphs such that for a fixed $d$ and every
  $p\geq1$, every graph in $\calC$ admits a $p$-centered coloring with
  $\Oh(p^d)$ colors.  If $\calD$ is a class of graphs such that every
  graph $G\in\calD$ has a tree-decomposition over $\calC$ with
  adhesion at most $k$, then for every $p\geq1$, every graph in
  $\calD$ admits a $p$-centered coloring with $\Oh(p^{d+k})$ colors.
\end{lemma}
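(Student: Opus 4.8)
The plan is to process the tree-decomposition of a graph $G \in \calD$ from the root downwards, coloring vertices bag by bag, and combine a "local" color (coming from a $p$-centered coloring of each torso, which lives in $\calC$ and hence uses $\Oh(p^d)$ colors) with a "global" color whose job is to record, for each vertex $v$, which of its at most $k$ adhesion-set ancestors are relevant. Concretely, root $T$ at an arbitrary node and for each $t \in V(T)$ let $A_t = B_t \cap B_{\parent(t)}$ be the adhesion set above $t$; assign each vertex $v$ to the highest bag containing it. For each torso $\mathrm{torso}(B_t) \in \calC$ fix a $p$-centered coloring $\psi_t$ with $\Oh(p^d)$ colors, and give $v$ the local color $\psi_{t(v)}(v)$ where $t(v)$ is $v$'s home node. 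The additional global coordinate is an element of a set of size $\Oh(p^k)$: since adhesion sets have size at most $k$, along the path from the root to $t(v)$ the vertex $v$ "sees" at most $k$ adhesion sets that actually separate it from the root in a meaningful way, and we use roughly $p$ shades per such adhesion slot (analogously to the $i \bmod (p{+}1)$ trick in Theorem~\ref{thm-planar-technical}), so the product is $\Oh(p^{d+k})$ colors in total.

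**The verification step.** Take a connected subgraph $H \subseteq G$; I want to show $\phi$ uses more than $p$ colors on $H$ or some color is unique on $H$. Let $t^*$ be the node of $T$ closest to the root whose home-set meets $V(H)$; by connectivity of $H$ and the subtree property of tree-decompositions, $H$ "enters" the subtree below $t^*$ only through the bag $B_{t^*}$, and in fact all of $H$ that lies in the subtree rooted at any single child $s$ of $t^*$ communicates with the rest of $H$ only through the adhesion set $A_s \subseteq B_{t^*}$. The key structural claim — this is the part I expect to be the main obstacle — is that one can reduce to the case where $V(H)$ is concentrated in finitely many bags along a root path, so that the global coordinate genuinely controls the interaction: if $H$ already uses more than $p$ colors we are done, so assume it uses at most $p$; then $V(H)$ can meet at most $p$ distinct home-nodes (else more than $p$ local colors appear, using that $\psi_{t^*}$ restricted to $B_{t^*}$ is injective-ish on few vertices — more precisely, that a $p$-centered coloring of the torso, applied to the connected trace of $H$ in the torso, already forces $>p$ colors or a unique one). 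Having bounded the "width" of $H$ in the tree, run the argument of Theorem~\ref{thm-planar-technical}: either the torso-coloring $\psi_{t^*}$ already witnesses a unique color on the connected subgraph $H^* = H \cap \mathrm{torso}(B_{t^*})$ (which lifts to a unique $\phi$-color unless it is duplicated deeper, where the global coordinate kicks in), or $\psi_{t^*}$ uses $>p$ colors on $H^*$ and we are immediately done.

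**Where the real work is.** The honest obstacle is propagating uniqueness through the adhesion sets: a color unique in the torso $B_{t^*}$ restricted to $H^*$ might be duplicated by a vertex of $H$ sitting far below in the tree. The fix is exactly the modular/shade bookkeeping in the global coordinate — we want that two vertices $v, v'$ of $H$ with the same local color and the same global color must lie in the same home-node (so the $\Oh(1)$-sized within-bag tiebreaker finishes it), and this forces the global coordinate to encode, for each of the $\le k$ adhesion-slots, a residue modulo something like $p{+}1$ of the depth at which $v$'s root-path last crossed that slot, so that if $v$ and $v'$ disagree on home-node but agree on the global color then $H$ must traverse a full period of $p{+}1$ distinct values somewhere, yielding $>p$ colors. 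Carefully choosing the global color space so its size is $\Oh(p^k)$ while still separating all the cases is the technical heart; the counting $\Oh(p^d) \cdot \Oh(p^k) = \Oh(p^{d+k})$ is then immediate, and since each torso coloring and the tree-decomposition are computable in polynomial time (the latter by Theorem~\ref{thm-structure-theorem} in the application, or assumed given here), the resulting coloring is computed in polynomial time as well.
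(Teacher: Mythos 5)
First, note that the paper does not prove Lemma~\ref{lemma-lifting-through-bounded-adhesion} at all: it is imported verbatim from \cite{PS19}, so there is no in-paper argument to match, and your sketch would have to stand as a self-contained re-derivation of the result from \cite{PS19}. As written it does not: it is a plan whose decisive steps are either deferred or incorrect. The most concrete problem is the role of $k$. Adhesion at most $k$ bounds the \emph{size} of each adhesion set $B_t\cap B_{t'}$, not the number of adhesion sets between the root and the home node $t(v)$; a root-to-home path can cross arbitrarily many adhesion sets, so the claim that $v$ ``sees at most $k$ adhesion sets that actually separate it from the root'' and the ensuing ``one residue per slot, $\le k$ slots'' accounting of the global coordinate is unjustified. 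Earning the $\Oh(p^k)$ budget (morally a $\binom{p+k}{k}$-type count over $k$-element adhesion sets) is exactly the content of the lemma, and your text explicitly postpones it (``the technical heart''), so no construction of the second coordinate is actually given.

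Second, the verification step contains a false reduction: from ``$\phi$ uses at most $p$ colors on $H$'' you infer that $V(H)$ meets at most $p$ distinct home nodes. The torso colorings $\psi_t$ of different torsos draw from the same palette of $\Oh(p^d)$ colors, so a connected $H$ can descend through arbitrarily many home nodes while reusing few local colors; preventing exactly this is the job of the not-yet-constructed global coordinate, so the argument is circular at this point. Likewise, the key danger you correctly identify --- a color unique on the trace $V(H)\cap B_{t^*}$ (which is indeed connected in the torso because adhesion sets are cliques, though this too needs a short argument about excursions of $H$ below $t^*$) being duplicated by a vertex of $H$ far below --- is only named, not resolved; the mod-$(p{+}1)$ layering trick of Theorem~\ref{thm-planar-technical} does not transfer directly, because here the obstruction is not two vertices in far-apart BFS layers but repeated colors across different subtrees glued along small separators, and $H$ need not sweep through $p{+}1$ distinguishable ``depths'' on the way between them. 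So the proposal has the right general shape (home nodes, torso colorings, an extra coordinate of size $\Oh(p^k)$), but the lemma's actual content --- defining that coordinate and proving the uniqueness propagation through adhesion sets --- is missing.
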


We also need the following theorem from \cite{PS19}.

\begin{theorem}[\cite{PS19}]\label{theorem-proper-minor-closed}
For every graph $H$ there is a polynomial $f$ such that 
the graphs excluding~$H$ as a minor admit $p$-centered
colorings with at most $f(p)$ colors.
\end{theorem}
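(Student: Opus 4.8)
The plan is to feed the Grohe--Marx structure theorem (Theorem~\ref{thm-structure-theorem}) into the lifting lemma for bounded-adhesion tree-decompositions (Lemma~\ref{lemma-lifting-through-bounded-adhesion}), using Theorem~\ref{thm:bounded-degree} to handle the ``almost bounded degree'' torsos and Theorem~\ref{theorem-proper-minor-closed} to handle the $K_t$-minor-free torsos. Fix the excluded graph $H$ and set $k=|V(H)|$. Theorem~\ref{thm-structure-theorem} supplies constants $a(k),c(k),d(k),e(k)$ such that every graph $G$ excluding $H$ as a topological minor has a tree-decomposition of adhesion at most $a(k)$ in which each torso is of one of two kinds: \textup{(i)} it has at most $c(k)$ vertices of degree larger than $d(k)$, or \textup{(ii)} it excludes $K_{e(k)}$ as a minor.

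Next I would record a short ``apex peeling'' observation: if $G'$ is any graph, $S\subseteq V(G')$, and $G'-S$ admits a $p$-centered coloring with $m$ colors, then $G'$ admits a $p$-centered coloring with $m+|S|$ colors. To see this, keep the given coloring on $G'-S$ and give every vertex of $S$ a fresh private color; for a connected subgraph $J$ of $G'$, if $J$ meets $S$ then a vertex of $S$ has a unique color in $J$, and otherwise $J\subseteq G'-S$ so the $p$-centered property is inherited. Applying this to a type-\textup{(i)} torso $G'$ with $S$ the set of at most $c(k)$ vertices of degree exceeding $d(k)$: the graph $G'-S$ has maximum degree at most $d(k)$, so Theorem~\ref{thm:bounded-degree} gives $\chi_p(G'-S)=\Oh_k(p)$ and hence $\chi_p(G')=\Oh_k(p)$. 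For a type-\textup{(ii)} torso, Theorem~\ref{theorem-proper-minor-closed} applied with $K_{e(k)}$ in place of $H$ gives $\chi_p(G')=\Oh_k(p^{d_k'})$ for a constant $d_k'$ depending only on $k$.

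Now let $\calC_H$ be the class of all graphs satisfying \textup{(i)} or \textup{(ii)}. The two estimates together show that every graph in $\calC_H$ admits a $p$-centered coloring with $\Oh_k(p^{d_k})$ colors, where $d_k=\max(1,d_k')$. Every torso of the decomposition produced by Theorem~\ref{thm-structure-theorem} lies in $\calC_H$, so that decomposition is over $\calC_H$ and has adhesion at most $a(k)$. Lemma~\ref{lemma-lifting-through-bounded-adhesion} then yields a $p$-centered coloring of $G$ with $\Oh_k(p^{d_k+a(k)})$ colors. Since $k=|V(H)|$ is determined by $H$, this is a polynomial $f(p)$ whose degree and coefficients depend only on $H$, which is exactly Theorem~\ref{thm:topological-minors-sugar}. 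The coloring is moreover computable in polynomial time, because the decomposition of Theorem~\ref{thm-structure-theorem} is computed in time $f(H)\cdot|V(G)|^{\Oh(1)}$, and the colorings on the torsos together with the gluing in Lemma~\ref{lemma-lifting-through-bounded-adhesion} are polynomial.

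The main obstacle is the type-\textup{(i)} case. The torso operation adds all edges inside every adhesion set, so a torso can be dense and is \emph{not} in general a subgraph of $G$; what saves us is that the Grohe--Marx theorem is engineered precisely so that only a bounded number of torso vertices acquire large degree. Once those few vertices are peeled off, the remainder genuinely has bounded degree and Theorem~\ref{thm:bounded-degree} applies, while the peeling observation shows that re-inserting them costs only $\Oh_k(1)$ extra colors. Everything else --- merging the two torso types into a single class carrying a uniform polynomial bound, and checking that all parameters stay polynomial in $p$ with constants controlled by $|V(H)|$ --- is routine bookkeeping.
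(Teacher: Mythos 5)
Your argument is circular with respect to the statement you were asked to prove. The statement at hand is Theorem~\ref{theorem-proper-minor-closed}, the result of Pilipczuk and Siebertz for graphs excluding $H$ as a \emph{minor}, which this paper imports as a black box from~\cite{PS19}. What you have written is instead a proof of Theorem~\ref{thm:topological-minors-sugar} (the \emph{topological}-minor version) --- indeed it is essentially the paper's own proof of that theorem, apex peeling and all --- and in handling the type-(ii) torsos you explicitly invoke Theorem~\ref{theorem-proper-minor-closed} ``applied with $K_{e(k)}$ in place of $H$''. You cannot prove a statement by citing it. Nor can the circle be broken by observing that $H$-minor-free graphs are $H$-topological-minor-free (true, since every topological minor is a minor): your proof of the topological-minor case still needs the minor case as an ingredient, so nothing has been established unconditionally.

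To actually prove Theorem~\ref{theorem-proper-minor-closed} you would need an independent argument for $K_t$-minor-free graphs. The route taken in~\cite{PS19} goes through the Robertson--Seymour graph minor structure theorem rather than Grohe--Marx: an $H$-minor-free graph has a tree-decomposition of bounded adhesion whose torsos are almost embeddable in a surface of bounded genus (a bounded-genus part, plus a bounded number of vortices of bounded width, plus a bounded number of apex vertices). One then bounds $\chi_p$ polynomially for bounded-treewidth graphs and for bounded-genus graphs, absorbs the vortices and apices, and lifts through the bounded-adhesion decomposition via Lemma~\ref{lemma-lifting-through-bounded-adhesion}. None of that machinery appears in your proposal; the Grohe--Marx decomposition you use is tailored to topological minors and, as you yourself note, leaves a $K_{e(k)}$-minor-free torso type that must be handled by precisely the theorem in question.
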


With these tools in hand we can give a proof of a polynomial upper bound 
for graphs avoiding a fixed topological minor (Theorem~\ref{thm:topological-minors-sugar}).

Let $H$ be a graph on $k$ vertices and 
let $G$ be a graph avoiding $H$ as a topological minor. From
Theorem~\ref{thm-structure-theorem} we obtain a tree-decomposition 
$(T,\set{B_t}_{t\in V(T)})$ with adhesion at most $a(k)$ such that
every torso of a bag in the tree-decomposition satisfies one of the two 
properties mentioned there.

For fixed $p\geq1$, we construct a $p$-centered coloring of $G$. 
Let $t\in V(T)$ and consider the torso $\tau(t)$ of the bag $B_t$.
Suppose that $\tau(t)$ has at most $c(k)$ vertices of degree larger than $d(k)$.
We color the vertices of $\tau(t)$ as follows:
\begin{enumerate}
\item use at most $c(k)$ distinct colors on vertices of degree larger than $d(k)$;
\item color the remaining vertices using at most $(32\cdot d(k))^2\cdot p$ colors 
applying Theorem~\ref{thm:bounded-degree}.
\end{enumerate}
Clearly, we obtain a $p$-centered coloring of~$\tau(t)$ using at most
$c(k)+(32\cdot d(k))^2\cdot p$ colors.

Now suppose that $\tau(t)$ avoids $K_{e(k)}$ as a minor.  By Theorem
\ref{theorem-proper-minor-closed}, there is a constant~$f(k)$ such that
$\tau(t)$ admits a $p$-centered coloring using $\Oh(p^{f(k)})$ colors.

Therefore, by Lemma~\ref{lemma-lifting-through-bounded-adhesion}, the graph $G$ 
admits a $p$-centered coloring using $\Oh(p^{1+f(k)+a(k)})$ colors.
This completes the proof.

As mentioned, Theorem~\ref{thm-structure-theorem} comes with an algorithm 
that produces a desired tree-decomposition for a graph $G$ on $n$ vertices in
$f(H)\cdot n^{\Oh(1)}$ in time for some computable function $f$. Indeed, both results
from~\cite{PS19},  Lemma~\ref{lemma-lifting-through-bounded-adhesion} and
Theorem~\ref{theorem-proper-minor-closed}, are also supported by algorithms
with  running times $n^{\Oh(1)}$. The final algorithm within the proof of our theorem
calls the algorithm for bounded degree case given by 
Theorem~\ref{thm:bounded-degree}.
As we have seen in the previous section, the expected running
time of the latter algorithm is~$\Oh(n^2)$. All in all, there is a randomized polynomial time
algorithm to determine a $p$-centered coloring of a graph without  $H$ as a topological
minor with polynomial in $p$ number of colors.

\section{Upper bounds for graphs of bounded simple treewidth}
\label{sec-upper-bounds-subplanars}

Theorem \ref{thm:planarup} has been shown as a part of
Corollary~\ref{cor:p+g}.  There we made use of an upper bound for planar
graphs of treewidth at most 3
(Theorem~\ref{thm:planarclasses}.\ref{enu-stackedtri-up}).  We remark that
planar graphs of treewidth at most 3 are also known as subgraphs of stacked
triangulations, see~\cite{KV12}. We proceed to prove
Theorem~\ref{thm:planarclasses}.\ref{enu-outerplanar-up} and
Theorem~\ref{thm:simpletree}.\ref{enu-simpletree-up} which also gives
Theorem~\ref{thm:planarclasses}.\ref{enu-stackedtri-up} To establish these
results, we need a structural property of chordal graphs, sometimes called the
\emph{shadow completeness}, that was independently observed by Dujmovi\'c,
Morin and Wood~\cite{DMM05} and by Kündgen and Pelsmajer~\cite{KP08}.
  
\begin{lemma}[shadow completeness]\label{lemma-shadow}
  Let $G$ be a chordal graph and let $(V_0,V_1,\ldots)$ be a BFS-layering
  of~$G$. Let $i>0$ and let $C\subseteq G[\bigcup_{j> i} V_j]$ be
  connected. Let the \emph{shadow} $S_i(C)$ of $C$ in $V_i$ be the set of
  $v\in V_i$ such that there is a path $P$ from $v$ to $C$ with
  $P\cap V_i = \{v\}$. Then $S_i(C)$ is a clique. Moreover, $S_i(C)$ separates
  $C$ from $G[\bigcup_{j< i} V_j]$.
\end{lemma}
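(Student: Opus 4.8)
The plan is to prove both assertions by using the defining property of chordal graphs — that every cycle of length at least four has a chord — together with the layering structure. For the ``moreover'' part first: if $v\in S_i(C)$, then by definition there is a path from $v$ into $C$ meeting $V_i$ only in $v$; such a path proceeds through layers $V_{i+1},V_{i+2},\ldots$ before entering $C$, and in any case stays inside $\bigcup_{j\ge i}V_j$ after leaving $v$. Any path from $C$ to $G[\bigcup_{j<i}V_j]$ must cross from layer $V_{\ge i}$ to layer $V_{<i}$, and since a layering only has edges between consecutive layers, it must pass through some vertex of $V_i$; I would argue that the first such vertex (reading from the $C$-side) lies in $S_i(C)$, because the portion of the path from $C$ up to that vertex is a witnessing path meeting $V_i$ in exactly that vertex. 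Hence every such path hits $S_i(C)$, so $S_i(C)$ separates $C$ from $G[\bigcup_{j<i}V_j]$.

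For the main claim that $S_i(C)$ is a clique, I would take two distinct vertices $u,v\in S_i(C)$ and aim to show $uv\in E(G)$. Pick witnessing paths $P_u$ from $u$ to $C$ and $P_v$ from $v$ to $C$, each meeting $V_i$ only in its endpoint; choosing them to reach $C$ and then using the connectivity of $C$ to join their endpoints inside $C$, I can form a cycle $Z$ through $u$ and $v$ whose only vertices in $V_i$ are $u$ and $v$, and all of whose other vertices lie in $\bigcup_{j>i}V_j$ (after possibly shortening to make the paths internally disjoint, or routing through $C$). Now apply chordality: $Z$ has length at least four (since $u\ne v$ and there is at least one intermediate vertex in a layer above), so it has a chord. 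The key observation is that no chord can have an endpoint in $\bigcup_{j<i}V_j$ (there are none on $Z$), and by an innermost-counterexample / induction-on-$|Z|$ argument, every chord of $Z$ that is not $uv$ itself splits $Z$ into two shorter cycles, at least one of which still passes through both $u$ and $v$ — unless the chord is incident to $u$ or $v$. Here I would use that $u$ and $v$ have no neighbors in $V_i$ other than possibly each other along $Z$'s structure, and that a chord from $u$ to an intermediate vertex $w\in V_{j}$ with $j>i$ is impossible when $w$ is not in the layer adjacent to $V_i$; iterating, the only surviving chord is $uv$, giving $uv\in E(G)$.

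The main obstacle I anticipate is making the cycle-chord bookkeeping fully rigorous: one must induct carefully (say, on the number of vertices of the chosen cycle $Z$ through $u,v$ with interior in $\bigcup_{j>i}V_j$), showing that any chord either already is $uv$, or produces a strictly shorter cycle of the same type to which the induction hypothesis applies, or is ruled out by the layering constraint $|j-j'|\le1$ for edge endpoints. A clean way to set this up is: among all cycles through $u$ and $v$ whose interior lies entirely in $G[\bigcup_{j>i}V_j]$, pick one of minimum length; then argue it must be a triangle or a $4$-cycle forced to have chord $uv$, because any other chord would contradict minimality (it cannot go to $V_{<i}$, it cannot skip layers by the layering property, and a chord among interior vertices yields a shorter qualifying cycle). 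This reduces everything to a short case analysis, and once $uv\in E(G)$ is established for an arbitrary pair, $S_i(C)$ is a clique, completing the proof.
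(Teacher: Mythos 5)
Your ``moreover'' (separator) argument is fine and matches the paper's. The clique part, however, has a genuine gap at its very first step: the cycle $Z$ you want to apply chordality to need not exist. Joining the two witnessing paths $P_u,P_v$ through the connected subgraph $C$ only produces a $u$--$v$ \emph{path} whose interior lies in $\bigcup_{j>i}V_j$; it does not close into a cycle. To get a cycle all of whose vertices other than $u,v$ lie above layer $i$ you would need a second, internally disjoint $u$--$v$ path above layer $i$ (or the edge $uv$ itself, which is what you are trying to prove). Such a second path need not exist: for instance, if $u$ and $v$ have a single common neighbor $w\in V_{i+1}$ and $C=\{w\}$, every cycle through $u$ and $v$ with interior above layer $i$ must already use the edge $uv$, so your minimum-length-cycle argument is circular in exactly the cases that matter. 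The same issue defeats the ``among all cycles through $u$ and $v$ whose interior lies entirely in $G[\bigcup_{j>i}V_j]$, pick one of minimum length'' plan: the set you minimize over can be empty (or consist only of cycles using $uv$).

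The missing idea is to close the cycle \emph{below} layer $i$, and this is precisely where the BFS hypothesis enters --- your sketch never uses it, which is a warning sign, since the clique conclusion is false for arbitrary layerings (e.g.\ $V_0=\{x\}$, $V_1=\{u,v\}$, $V_2=\{w\}$ with edges $xu,uw,vw$ is a chordal graph with a valid layering in which $S_1(\{w\})=\{u,v\}$ is not a clique). In the paper's proof one takes $P_1$ a shortest $u$--$v$ path with all inner vertices in $\bigcup_{j>i}V_j$ (it exists via $C$), and $P_2$ a shortest $u$--$v$ path with all inner vertices in $\bigcup_{j<i}V_j$, which exists because the layering is a BFS-layering: $u,v\in V_i$ with $i>0$ have parents in $V_{i-1}$ and the union of the lower layers is connected through the BFS tree. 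Then $P_1\cup P_2$ is a cycle of length at least $4$ (assuming $uv\notin E(G)$), shortestness rules out chords inside $P_1$ and inside $P_2$, and the layering condition $|j-j'|\le 1$ rules out chords between an inner vertex of $P_1$ (above $V_i$) and an inner vertex of $P_2$ (below $V_i$); so the only possible chord is $uv$, and chordality forces $uv\in E(G)$. If you replace your construction of $Z$ by this two-sided cycle, the rest of your chord bookkeeping becomes unnecessary.
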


\begin{proof}
  Let $C$ be a connected subgraph of $G[\bigcup_{j> i} V_j]$.  Let $v$
  and $v'$ be two vertices in~$S_i(C)$.  Let $P$ and $P'$ be the
  certifying paths of $v$ and $v'$. The ends of $P$ and $P'$ can be
  connected in $C$ yielding a path connecting~$v$ and $v'$ which has
  all inner vertices in $G[\bigcup_{j> i} V_j]$. Let $P_1$ be a
  shortest path from $v$ to $v'$ with all inner vertices in
  $G[\bigcup_{j> i} V_j]$. Let $P_2$ be a shortest path from $v$ to
  $v'$ with all inner vertices in $\bigcup_{j< i} V_j$ (there is one
  as $G\left[\bigcup_{j< i} V_j\right]$ is connected in the BFS-tree).
  Now $P_1 \cup P_2$ is a cycle in $G$ of length at least~$4$ and the
  only possible chord is an edge $vv'$.  Since $G$ is chordal, we have
  $vv'\in E(G)$, as desired.

  Since every path from a vertex in $\bigcup_{j< i} V_j$ to a vertex
  in $C$ contains a vertex in $V_i$ the shadow $S_i(C)$ is a separator
  for these two sets.
\end{proof}

\begin{corollary}
  Let $G$ be a chordal graph and let $(V_0,V_1,\ldots)$ be a BFS-layering of
  $G$. Let $i\geq0$ and let $H\subseteq G[\bigcup_{j\geq i} V_j]$ be a
  connected induced subgraph. Then $H\cap G[V_i]$ is connected.
	\label{Cor:shadow}
\end{corollary}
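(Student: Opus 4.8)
The plan is to deduce the corollary from the shadow completeness lemma (Lemma~\ref{lemma-shadow}) by a path-surgery argument. Write $U=V(H)\cap V_i$; it suffices to show that $U$ induces a connected subgraph of $G$, which is exactly what is meant by $H\cap G[V_i]$ being connected. The case $i=0$ is immediate: a connected subgraph lies in a single component of $G$, and $V_0$ meets each component in exactly one vertex, so $|U|\le 1$. From now on assume $i>0$; we may also assume $|U|\ge 2$, as otherwise there is nothing to prove. Fix $u,v\in U$.

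Since $H$ is connected, pick a path $Q$ in $H$ from $u$ to $v$; every vertex of $Q$ lies in $\bigcup_{j\ge i}V_j$ because $V(H)$ does. Let $u=w_0,w_1,\dots,w_k=v$ be the vertices of $Q$ lying in $V_i$, listed in the order they appear along $Q$. The key step is to show that $w_tw_{t+1}\in E(G)$ for every $t\in\{0,\dots,k-1\}$. If $w_t$ and $w_{t+1}$ are consecutive along $Q$ this is clear. Otherwise the portion of $Q$ strictly between $w_t$ and $w_{t+1}$ is a nonempty path whose internal vertices, by the choice of the $w_t$'s and since $Q$ avoids every layer $V_j$ with $j<i$, all lie in $\bigcup_{j>i}V_j$; let $C$ be the connected subgraph of $G[\bigcup_{j>i}V_j]$ they span. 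The first edge of $Q$ out of $w_t$ is a path from $w_t$ to $C$ meeting $V_i$ only in $w_t$, so $w_t\in S_i(C)$, and symmetrically $w_{t+1}\in S_i(C)$. By Lemma~\ref{lemma-shadow}, $S_i(C)$ is a clique, hence $w_tw_{t+1}\in E(G)$.

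In either case $w_tw_{t+1}$ is an edge of $G$ with both ends in $U$, so $w_0w_1\cdots w_k$ is a walk in $G[U]$ joining $u$ to $v$; as $u,v\in U$ were arbitrary, $G[U]$ is connected, which is the claim. The only delicate point is checking that each detour between two consecutive $V_i$-vertices of $Q$ uses only layers strictly above $V_i$ — this is where both the choice of \emph{consecutive} $w_t$'s and the hypothesis $V(H)\subseteq\bigcup_{j\ge i}V_j$ come in — but this is a short observation rather than a genuine obstacle; the rest is bookkeeping.
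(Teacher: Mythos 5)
Your proof is correct and follows essentially the same route as the paper: apply the shadow-completeness Lemma~\ref{lemma-shadow} to each excursion of a path of $H$ above layer $V_i$ to obtain an edge of $G$ between the two $V_i$-endpoints of that excursion. The difference is only presentational — the paper phrases this as a shortest-path contradiction, while your direct surgery producing a walk in $G[U]$ (plus the separate trivial case $i=0$) is if anything slightly cleaner, since the shortcut edge need only lie in $G$, not in $H$.
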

\begin{proof}
  Consider two vertices $u,v$ in $H\cap G[V_i]$.  Take a path $P$ from $u$ to
  $v$ in $H$.  If all vertices of $P$ are in $H\cap G[V_i]$, then we are done.
  Otherwise, consider a vertex $x$ of $P$ with $x\in V_i$ such that the vertex
  after~$x$ in~$P$ is not in $V_i$.  Let $y$ be the first vertex of~$P$ after
  $x$ with $y\in V_i$. Lemma~\ref{lemma-shadow} implies that $x$ and $y$ are
  adjacent. Thus $P$ is not a shortest path between $u$ and $v$ in $H$, a
  contradiction.
\end{proof}

\subsection{Proof of the upper bound for outerplanar graphs}\hfill
	
  Let $G$ be an outerplanar graph. Then $G$ is a subgraph of a
  maximal outerplanar graph $G^+$. Since any $p$-centered coloring of
  $G^+$ can be restricted to $G$, we may assume $G=G^+$. It is
  well-known that maximal outerplanar graphs are 2-trees. 
  In particular,~$G$ is chordal. Let $(V_0,V_1,\ldots)$ be a BFS-layering
  of $G$.
\begin{klaim}
  The subgraph $G[V_i]$ of $G$ induced by $V_i$ is a linear forest,
  i.e., each connected component of it is a path.
\end{klaim}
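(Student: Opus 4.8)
The plan is to combine the classical characterization that a graph is outerplanar precisely when it has no $K_4$ minor and no $K_{2,3}$ minor with the standard trick that, in a BFS-layering, everything strictly below $V_i$ can be contracted to a single vertex which dominates $V_i$. Since a maximal outerplanar graph on at least three vertices is $2$-connected, the BFS-layering comes from a single root $r$ with $V_0=\{r\}$, and for every $j\geq1$ each vertex of $V_j$ has its BFS-parent in $V_{j-1}$; the degenerate cases $|V(G)|\leq 2$ and $i=0$ are trivial (a single vertex is a one-vertex path). So fix $i\geq1$ and let $T_{<i}$ be the restriction of the BFS-tree to $V_0\cup\dots\cup V_{i-1}$. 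Then $T_{<i}$ is connected and every vertex of $V_i$ has a neighbour in $T_{<i}$ (namely its parent), so contracting $T_{<i}$ to a single vertex $x$ yields a minor $G'$ of $G$ in which $x$ is adjacent to all of $V_i$.

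First I would show that $H:=G[V_i]$ is acyclic: if $c_1c_2\cdots c_kc_1$ were a cycle of $H$ with $k\geq3$, then the branch sets $\{x\}$, $\{c_1\}$, $\{c_2\}$, $\{c_3,\dots,c_k\}$ are pairwise adjacent in $G'$ (via the edges $xc_1,xc_2,xc_3$ and the cycle edges $c_1c_2,c_2c_3,c_kc_1$), so $G$ would have a $K_4$ minor, a contradiction. Next I would show that $H$ has maximum degree at most $2$: if some $v\in V_i$ had three neighbours $u_1,u_2,u_3$ inside $V_i$, then $\{x\},\{v\}$ on one side and $\{u_1\},\{u_2\},\{u_3\}$ on the other would form a $K_{2,3}$ minor of $G$, again a contradiction. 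A forest of maximum degree at most $2$ is a disjoint union of paths, which is exactly the assertion of the claim.

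There is no serious obstacle here. The only points requiring a little care will be the contraction step — verifying that the lower layers of a BFS-layering, via the BFS-tree, induce a connected set adjacent to every vertex of $V_i$ — and the routine check that the listed branch sets really do realize $K_4$ and $K_{2,3}$ as minors of $G$. Alternatively, one could phrase the whole argument by observing that $G'[\{x\}\cup V_i]$ is the join $K_1+H$ and that $K_1+H$ is outerplanar if and only if $H$ is a linear forest, but this reduces to the same two case checks.
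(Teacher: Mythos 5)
Your proof is correct and takes essentially the same approach as the paper: both contract the lower BFS-layers to a single apex adjacent to all of $V_i$ and then exploit outerplanarity of the resulting minor. The only cosmetic difference is that the paper derives acyclicity via a treewidth argument (an apex raises treewidth by exactly one, so $\tw(G[V_i])\leq 1$) instead of your $K_4$-minor exclusion, while the $K_{2,3}$-minor argument for the degree bound is identical.
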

\begin{proof}
  Contracting $G[\bigcup_{j< i} V_j]$ and deleting
  $G[\bigcup_{j> i} V_j]$ yields a minor $M$ of $G$ that consists of
  $G[V_i]$ and an additional apex $a$ adjacent to every $v\in V_i$. 
  Since adding an apex increases
  the treewidth by exactly one, we have $\tw(G[V_i])\leq 1$, so $G[V_i]$ is a
  forest. Assume $v\in G[V_i]$ has (at least) 3 neighbors
  $x,y,z\in G[V_i]$. Deleting the edge $av$ and all vertices except
  $x,y,z,v,a$ from $M$ yields a $K_{2,3}$-minor of $G$, 
  which is a contradiction with $G$ being outerplanar.
  Thus $G[V_i]$ is a forest with all vertices of degree at most $2$, 
  as desired.
\end{proof}

We will refer to the connected components of $G[V_i]$ as the
\emph{layer paths} of $G[V_i]$.
In the following, we construct a coloring
$\phi:V(G) \rightarrow C$ with $C$ being a set of colors 
of size $p\lceil\log(p+1)\rceil + 2p+1$.

We construct $\phi$ layer by layer. 
In each layer we color the layer paths independently. When it comes
to coloring a layer path $P$ we have a set $\calF(P)$ of colors which
are forbidden for $P$.
We initialize $\calF(P)=\emptyset$ for every layer path $P$ in $G$.
The set $\calF(P)$ will be of size at most $p\lceil\log (p+1)\rceil+p$.
The path $P$ will be colored with a set of $p+1$ colors from $C-\calF(P)$.

The layer $V_0$ contains only a single vertex $v$. 
Set $\phi(v)=\alpha$ for some color $\alpha\in C$. 
To make
sure that $\alpha$ does not appear on the next $p$ layers we add $\alpha$ 
into $\calF(P)$ for every layer path $P$ of $G[V_i]$ with
$0 < i \leq p$.

If all layers $V_j$ with $0\leq j <i$ are colored and $P$ is a layer path 
of $G[V_i]$, then we extend~$\phi$ to $P$ as follows: 
Choose a set $C_P \subset C-\calF(P)$ of $p+1$ colors, say 
$C_P =\set{0_P,1_P,\ldots,p_P}$.  
Color $P=(v_0,v_1,\ldots,v_m)$ periodically, i.e., let
$\phi(v_k)=j_P$ if $k=j \modu\ (p+1)$.  

Now consider a layer path $P'$ in $G[V_j]$ with $i<j$, and note that
the
shadow $S_i(P')$ of $P'$ in $V_i$ is the set
\[
\left\{v\in V_i\mid \hbox{$v$ is adjacent to the component of $G\left[\bigcup_{j'>i} V_{j'}\right]$ containing $P'$}\right\}.
\]
By Lemma~\ref{lemma-shadow} the set $S_i(P')$
induces a clique in $G[V_i]$.
Therefore, $S_i(P')$ is contained in one of the layer paths of
$G[V_i]$ and $|S_i(P')|\leq 2$. Moreover $S_i(P')$ is a separator.
For $i<k<j$ let $P_k$ be the layer path of $V_k$ containing
$S_k(P')$. The separator property implies the following
transitivity of shadows:
$S_i(P') = S_i(P_k)$. Indeed, a path from $V_i$ to $P'$
has to pass through $S_k(P')\subset P_k$ and every path from
$V_i$ to $P_k$ can be extended within $G[\bigcup_{j'\geq k} V_{j'}]$
to a path from $V_i$ to $P'$.

For each layer path $P'$ in $G[V_k]$ with $i<k\leq i+p$ such that
$S_i(P') \subseteq V(P)$ we extend $\calF(P')$ by some colors used on
$P$.  To determine these forbidden colors we use an auxiliary
structure.  Let $T$ be a binary tree on $p$ nodes with root $r_T$ and
height $\lceil \log(p+1)\rceil$ where height of a rooted tree is the
maximum number of nodes on a path from the root to a leaf.  We label
the nodes of $T$ with the totally ordered set of colors $1_P < 2_P\ldots <p_P$
using an \emph{in-order} traversal, i.e., the label of each node is
larger than all labels of nodes in its left subtree and smaller than
all labels of nodes in its right subtree. For
$\ell\in\set{1_P,\ldots,p_P}$ let~$F(\ell)$ denote the set of labels
of ancestors of the node~$u$ labeled $\ell$, i.e., $F(\ell)$ is the
set of labels seen on the path from $u$ to $r_T$ in $T$. We extend
this definition by $F(0_P)=\emptyset$.

Let $i<j\leq i+p$ and consider a layer path $P'$ of $G[V_j]$ with 
$S_i(P') \subset V(P)$.
We extend the set of forbidden colors of $P'$ putting
\[
  \calF(P') \gets \calF(P') \cup \set{0_P} \cup \hbox{$\bigcup_{v\in S_i(P')} F(\phi(v))$}.
\]
 \begin{klaim}\label{claim_inorder}
If $\ell,\ell'$ are consecutive in the cyclic order on $\set{0_P,1_P,\ldots,p_P}$, then
\[F(\ell)\subseteq F(\ell')\quad\textrm{or}\quad F(\ell')\subseteq F(\ell).
\]
 \end{klaim}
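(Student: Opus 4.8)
The key point is that the labelling of $T$ by $1_P < 2_P < \cdots < p_P$ via an in-order traversal makes $T$ a binary search tree: for every node $u$, all labels in the left subtree of $u$ are smaller than the label of $u$, and all labels in the right subtree are larger. Consequently $F(\ell)$ is exactly the set of labels seen on the path from the root $r_T$ to the node carrying label $\ell$, and the plan is to show that for consecutive $\ell,\ell'$ these two paths are nested, one inside the other.

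First I would dispose of the two cyclic pairs that involve $0_P$, namely $\set{p_P,0_P}$ and $\set{0_P,1_P}$: here $F(0_P)=\emptyset$ by definition, so $F(0_P)\subseteq F(\ell)$ for the other member $\ell$ of the pair and the claim holds trivially. For every remaining consecutive pair both members lie in $\set{1_P,\ldots,p_P}$, so we may assume $\ell=k_P$ and $\ell'=(k{+}1)_P$ for some $1\le k\le p-1$; in particular no label lies strictly between $\ell$ and $\ell'$.

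Let $u$ and $u'$ be the nodes of $T$ labelled $\ell$ and $\ell'$, and let $a$ be their lowest common ancestor in $T$. I claim $a\in\set{u,u'}$. Suppose not; then $u$ and $u'$ lie in two different subtrees of $a$, and since the label of $u$ is the smaller one, $u$ lies in the left subtree of $a$ and $u'$ in the right subtree. By the search-tree property this forces the label of $a$ to satisfy $\ell < \operatorname{label}(a) < \ell'$, contradicting that $\ell$ and $\ell'$ are consecutive. Hence $a=u$ or $a=u'$. If $a=u$, then $u'$ is a descendant of $u$, so every ancestor of $u$ is also an ancestor of $u'$ and therefore $F(\ell)\subseteq F(\ell')$. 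Symmetrically, if $a=u'$ then $u$ is a descendant of $u'$ and $F(\ell')\subseteq F(\ell)$. In all cases the two ancestor sets are nested, which is exactly the assertion of the claim.

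The only step requiring any care is the observation that the lowest common ancestor of two nodes carrying consecutive labels must be one of those two nodes; this uses the binary-search-tree structure of the in-order labelling, but is otherwise immediate. Everything else is just unwinding the definition of $F$, so I do not expect a real obstacle.
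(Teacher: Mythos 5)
Your proof is correct and takes essentially the same approach as the paper: the paper dismisses the pairs involving $0_P$ since $F(0_P)=\emptyset$ and then asserts that two nodes carrying consecutive in-order labels must be in ancestor--descendant relation, which is exactly the fact you establish via the lowest-common-ancestor argument using the binary-search-tree property of the labelling. Your write-up simply spells out the step the paper declares ``easy to see.''
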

\begin{proof}
  If either $\ell$ or $\ell'$ is $0$, the statement is obvious.
  Otherwise $\ell'=\ell+1$ and there are nodes $u$ and $u'$ with these
  labels in $T$. 
  It is easy to see that every two consecutive nodes in an in-order traversal 
  must be in the ancestor-descendant relation in the tree.
  This immediately gives $F(\ell)\subseteq F(\ell')$ or $F(\ell')\subseteq F(\ell)$.
\end{proof}
Note that if $S_i(P')=\set{v,v'}$, then $v$ and $v'$ are consecutive on $P$.
By the claim proven above we have
\[
\left|\hbox{$\bigcup_{v\in S_i(P')} F(\phi(v))$}\right| \leq \lceil\log(p+1)\rceil.
\]
Hence at most $\lceil\log(p+1)\rceil + 1$ colors are added to $\calF(P')$
after coloring the layer paths in $G[V_i]$.
Since~$\calF(P')$ is extended at most $p$ times we get an upper bound
\[
|\calF(P')| \leq p\lceil\log(p+1)\rceil + p.
\]
\begin{klaim}
  $\phi$ is a $p$-centered coloring of $G$.
\end{klaim}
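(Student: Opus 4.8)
The plan is to verify the defining property of a $p$-centered coloring directly: take an arbitrary connected subgraph $H$ of $G$, and show that either $\phi$ uses more than $p$ colors on $H$, or some color appears exactly once on $H$. Let $V_i$ be the lowest-indexed layer that meets $H$, and set $H_0 = H \cap G[V_i]$. Since $H \subseteq G[\bigcup_{j\geq i}V_j]$, Corollary~\ref{Cor:shadow} tells us $H_0$ is connected, so $H_0$ is contained in a single layer path $P$ of $G[V_i]$. The key dichotomy will be whether $H$ is "short" (lives in layers $V_i,\ldots,V_{i+p}$, so the forbidden-set machinery applies) or "tall" (reaches layer $V_{i+p+1}$ or beyond).

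\medskip

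\emph{Tall case.} If $H$ meets some layer $V_k$ with $k\geq i+p+1$, then because $H$ is connected it meets every layer $V_i, V_{i+1},\ldots,V_{i+p+1}$; that is at least $p+2$ consecutive layers. Here I would use the fact that the coloring was built layer-by-layer so that the initial vertex color $\alpha$ is forbidden on layers $V_1,\ldots,V_p$, and more importantly that when a layer path gets colored periodically with $p+1$ colors $\{0_P,\ldots,p_P\}$, the color $0_P$ is \emph{forbidden} on every layer path in the next $p$ layers whose shadow lies in $P$. So if $H$ spans $\geq p+2$ layers, I will locate a color that appears on $H$ in some layer but is forbidden — hence does not reappear — in a range of $p$ layers below it, and argue it appears exactly once. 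This is the part where one must be careful about \emph{which} layer path contributes which color; the transitivity of shadows ($S_i(P') = S_i(P_k)$) is the tool that keeps everything anchored to $P$ as we move down through layers. I expect this bookkeeping to be the main obstacle.

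\medskip

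\emph{Short case.} Now suppose $H \subseteq G[V_i \cup \cdots \cup V_{i+p}]$. Consider the colors $\phi$ uses on $H_0 \subseteq P$. If $H_0$ is a single vertex $v$, then every layer path $P'$ of a higher layer $V_j$ ($i<j\le i+p$) that meets $H$ has shadow $S_i(P')\subseteq V(P)$ hitting $v$, so $F(\phi(v)) \cup \{0_P\}$ was added to $\calF(P')$; in particular the color $\phi(v)$ never reappears in layers $V_{i+1},\ldots,V_{i+p}$, and since $V_i \cap H = \{v\}$, the color $\phi(v)$ is unique on $H$ — done. If $H_0$ has at least two vertices, they form a subpath of the periodically-colored path $P$; either this subpath is short enough that some color $j_P$ occurs exactly once on it \emph{and} is forbidden below (because $F(j_P)$ or an adjacent $F$-value was propagated down via the shadow of the relevant higher layer path), giving a globally unique color, or the subpath is long enough — spanning at least $p+1$ consecutive vertices of $P$ — that all $p+1$ colors $0_P,\ldots,p_P$ already appear on $H_0$, and then $\phi$ uses $\geq p+1 > p$ colors on $H$. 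Claim~\ref{claim_inorder} is what guarantees the forbidden sets pushed down from consecutive vertices $v,v'$ of $S_i(P')$ are nested, so the "forbidden below" colors behave like a contiguous block that the in-order labelling was designed to exploit. The crux is the middle subcase: matching the length of $H_0$ against $p$ and extracting from the tree-labelling $F$ a color that is simultaneously unique on $H_0$ and absent from all of $V_{i+1},\ldots,V_{i+p}$; I anticipate this is where the precise choice of the in-order traversal of the binary tree $T$ does the real work, and it is the second likely obstacle after the tall-case bookkeeping.
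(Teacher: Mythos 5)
Your setup (minimal layer $i$, $H_0=H\cap G[V_i]$ connected by Corollary~\ref{Cor:shadow}, hence inside one layer path $P$) matches the paper, but both of the steps you flag as ``obstacles'' are exactly where the proof lives, and your plan for the first one would fail. In the tall case the right conclusion is \emph{not} a uniquely occurring color: when $H$ spans many layers there need not be any color appearing exactly once (a color used on $P$ is only forbidden for the next $p$ layers, and only on layer paths whose shadow lies in $P$, so it can recur further down, elsewhere in the same layer, or above), and indeed for $H=G$ one expects every color to repeat. The paper instead shows $\phi$ uses more than $p$ colors: take a layer path $Q$ of $V_{i+p}$ meeting $H$; for each $j$ with $i\leq j<i+p$ the shadow $S_j(Q)$ separates $Q$ from the lower layers (Lemma~\ref{lemma-shadow}), so connectivity of $H$ forces a vertex $u_j\in S_j(Q)\cap V(H)$; then for $j<k$, transitivity of shadows gives $u_j\in S_j(Q_k)$ for the layer path $Q_k\ni u_k$, so $\phi(u_j)\in F(\phi(u_j))\subseteq\calF(Q_k)$ and hence $\phi(u_k)\neq\phi(u_j)$. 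This produces $p+1$ pairwise distinctly colored vertices of $H$; none of this chain appears in your proposal.

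In the short case your single-vertex subcase is fine, but the ``middle subcase'' you leave open is the actual content, and ``some color $j_P$ unique on $H_0$'' is not enough: only colors that are \emph{ancestor labels} of $\phi(v)$ for some $v$ in the relevant shadow $S_i(P')$ get pushed into $\calF(P')$, and you do not control which one or two vertices of $Q=H_0$ the shadow consists of. The paper's resolution is to pick a color that is an ancestor of $\phi(v)$ for \emph{every} $v\in Q$: if two vertices of $Q$ repeat a color, periodicity gives all $p+1$ colors of $P$ on $Q$; otherwise, if $0_P\in C_Q$ then $0_P$ is unconditionally in every relevant $\calF(P')$ and is unique on $H$ (this also handles the cyclic wrap-around your interval picture misses); otherwise $C_Q$ is an interval of $\{1_P,\dots,p_P\}$ and one takes $\beta$ to be the label of the unique shallowest node of $T$ with label in $C_Q$, which is an ancestor of all nodes labeled by $C_Q$, so $\beta\in F(\phi(v))$ for all $v\in Q$; since every layer path $P'$ met by $H$ in layers $i+1,\dots,i+p$ has its shadow inside $P$ and meeting $Q$, $\beta\in\calF(P')$ and $\beta$ is unique on $H$. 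As written, your proposal acknowledges but does not supply these two arguments, so it has genuine gaps in both cases.
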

\begin{proof}
  Let $H$ be a connected subgraph of $G$. We want to show that $\phi$
  uses more than $p$ colors on $H$ or there is a color used only once
  on $H$. Let $i:=\min\set{j\geq0\mid V_j\cap V(H)\neq \emptyset}$.  We distinguish two
  cases: \medskip

\noindent\textbf{Case 1.}
$H$ contains a vertex from $V_{i+p}$.

Let $Q$ be a layer path in $V_{i+p}$ containing a vertex $u_{i+p}$
from $H$. For all $j$ with $i\leq j < i+p$ the connectivity of~$H$
together with the separator property of $S_j(Q)$ imply that
$S_j(Q) \cap V(H) \neq \emptyset$. Therefore we can choose
$u_j \in S_j(Q) \cap V(H)$ for $i\leq j < i+p$.

We claim that $\phi(u_j) \neq \phi(u_k)$ for all
$i\leq j < k \leq i+p$.  Let $Q_k$ be the layer path containing
$u_k$. From $S_k(Q) \subset Q_k$ it follows from the transitivity
property of shadows that $S_j(Q_k) = S_j(Q)$, whence $u_j \in S_j(Q_k)$. Now
$\phi(u_j) \in F(\phi(u_j)) \subset \calF(Q_k)$,
Therefore, the color $\phi(u_j)$ is forbidden for the path $Q_k$ 
and $\phi(u_k)\neq\phi(u_j)$.
  
We conclude that the vertices $u_i,\ldots,u_{i+p}$ are colored with $p+1$ 
distinct colors. Therefore $\phi$ uses more than $p$ colors on~$H$.
\medskip
  
  \noindent\textbf{Case 2.}
  $H$ contains no vertex from $V_{i+p}$.
		
  Corollary \ref{Cor:shadow} implies that $Q=G[V(H)\cap V_i]$ is connected, so
  it is a subpath of some layer path $P$ of $G[V_i]$. Let $C_P=\set{0_P,...,p_P}$ be the set of colors
  used to color $P$.
  Let $C_Q\subset C_P$ be the set of colors used on $Q$.
  If two vertices of $Q$ have the same color, then due to the periodicity
  of $\phi$ on $P$ all the $p+1$ colors from $\set{0_P,\ldots,p_P}$ are used on $Q$, whence
  $\phi$ uses on $H$
  more than $p$ colors. From now on we assume that all colors on $Q$
  are unique.
  
  If $0_P \in C_Q$, then recall that $0_P \in \calF(P')$ for any layer path
  $P'$ of $G[V_k]$ with $i<k\leq i+p$ such that $S_i(P') \subseteq V(P)$.
  Therefore $0_P$ is contained in $\calF(P')$ for all $P'$ intersected
  by $V(H)$. Hence $0_P$ is a unique color on $H$. From now on we
  assume that $0_P \not\in C_Q$.
		
  Consider the rooted binary tree $T$ on $p$ nodes devised during the coloring procedure 
  of $P$. Since $C_Q$ is an interval of $\set{1,\ldots,p}$ 
  there is a unique node $r_Q$ in $T$ with a label from $C_Q$ such
  that $r_Q$ is a predecessor of all nodes with labels from $C_Q$ in $T$.
  Let $\beta$ be the label
  of $r_Q$. It follows that $\beta \in F(\ell)$ for all $\ell\in C_Q$, whence
  $\beta\in F(\phi(v))$ for all $v\in Q$.  Therefore $\beta \in \calF(P')$
  for any layer path
  $P'$ of $G[V_k]$ with $i<k\leq i+p$ such that $S_i(P') \subseteq V(P)$.
  This means that $\beta$ is unique on $H$, as desired.
\end{proof}
In summary, any outerplanar graph has a $p$-centered coloring with
$p\lceil \log(p+1)\rceil + 2p +1 \in \Oh(p\log p)$. This completes the
proof of Theorem \ref{thm:planarclasses}.\ref{enu-outerplanar-up}.
The described algorithm coloring outerplanar graphs has a straightforward $\Oh(n)$
time implementation.

\subsection{Proof of the upper bound for graphs of bounded simple treewidth}
\label{subsec:simpletree-upper}

For $p\geq1$, let $f(p)$ be the maximum $\chi_p(G)$ when $G$ is outerplanar.
We have just seen that $f(p)\in \Oh(p\log p)$.  Now we are going to show that
every planar graph of treewidth at most $3$ admits a $p$-centered coloring
with at most $(p+1)f(p)$ colors. This proof can be generalized though, which
is what we will do here. The essence of the generalization is that maximal
outerplanar graphs are simple $2$-trees.  We continue with a discussion of
simple treewidth.

For $k\geq1$, a tree-decomposition $(T,\calB)$ of $G$ is $k$-\emph{simple} 
if $(T,\calB)$ is of width at most~$k$ and for every $X\subset V(G)$ with $|X|=k$, 
we have that $X\subset B_t$ for at most two distinct~$t\in V(T)$. 
The \emph{simple treewidth} of~$G$, denoted by $\stw(G)$, 
is the least integer $k$ such that $G$ has a $k$-simple tree-decomposition. 

The notion of simple treewidth was proposed by Knauer and Ueckerdt~\cite{KU},
where graphs of $\stw(G)$ are defined as subgraphs of simple $k$-trees.  Since
we do not want to discuss $k$-trees and their construction order we stick to
our equivalent definition, see~\cite{W16} for further discussion.  It is easy
to see that $\tw(G)\leq \stw(G)\leq \tw(G)+1$, for every graph $G$.  Graphs
of simple treewidth at most~$1$ are disjoint unions of paths.  Graphs of
simple treewidth at most~$2$ are outerplanar graphs.  Graphs of simple
treewidth at most~$3$ are planar graphs of treewidth at most $3$, i.e.,
subgraphs of stacked triangulations, see~\cite{KV12}.

The first thing to learn about simple treewidth is that it is monotone under taking minors, 
i.e., if $H$ is a minor of $G$, then $\stw(H) \leq \stw(G)$, see~\cite{JM-weak-cols}.
The next key fact is that the simple treewidth goes down when we restrict the graph to
a single BFS layer.

\begin{lemma}[\cite{JM-weak-cols}]
\label{lem:stw_layering}
Let $G$ be a connected graph with $\stw(G)=k \geq 1$ and let
$(V_0,V_1,\ldots)$ be a BFS-layering of $G$.  Then $\stw(G[V_i]) \leq k-1$ for
every $i\geq0$.
\end{lemma}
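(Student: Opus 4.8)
The statement to prove is: if $G$ is connected with $\stw(G) = k \ge 1$ and $(V_0, V_1, \ldots)$ is a BFS-layering of $G$, then $\stw(G[V_i]) \le k-1$ for every $i \ge 0$. Let me write out how I would attack this.

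\textbf{Approach.} The plan is to mimic the argument used for outerplanar graphs (the "apex" trick appearing in the Claim inside the outerplanar proof), but carried out at the level of simple treewidth rather than ordinary treewidth. The key point is that when we collapse all the earlier layers $\bigcup_{j<i} V_j$ to a single vertex and delete all the later layers $\bigcup_{j>i} V_j$, we obtain a minor of $G$ that looks like $G[V_i]$ with one extra \emph{apex} vertex $a$ adjacent to a nonempty set of vertices of $V_i$; since simple treewidth is minor-monotone (quoted above from~\cite{JM-weak-cols}), this minor has simple treewidth at most $k$. The claim then reduces to a purely structural fact about simple tree-decompositions: \emph{if $H$ is a graph and $H^+ = H + a$ is obtained by adding one apex vertex adjacent to at least one vertex of $H$, then $\stw(H^+) \ge \stw(H) + 1$} — equivalently $\stw(H) \le \stw(H^+) - 1$. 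Applying this with $H = G[V_i]$ and $H^+$ the minor described above yields $\stw(G[V_i]) \le \stw(H^+) - 1 \le k - 1$, as desired. I should be slightly careful about the $i=0$ case ($V_0$ is a single vertex, $\stw = 1 \le k-1$ may fail if $k=1$; but then $G$ is a path, $G[V_0]$ is a single vertex and $\stw = 0$, or one adopts the convention that $\stw$ of the empty/one-vertex graph is at most $0$) and about the case where the apex is adjacent to no vertex of $V_i$ (possible only if $i>0$ forces connectivity issues; actually since $G$ is connected every vertex of $V_i$ with $i \ge 1$ has a neighbour in $V_{i-1}$, so after contraction $a$ is adjacent to \emph{all} of $V_i$, and for $i=0$ handle separately).

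\textbf{The apex fact.} To prove that adding one apex raises simple treewidth by at least one, I would argue contrapositively: suppose $(T, \calB)$ is a $k$-simple tree-decomposition of $H^+ = H + a$; I construct a $(k-1)$-simple tree-decomposition of $H$. Let $T_a$ be the subtree of $T$ induced by the bags containing $a$. First delete $a$ from every bag: this gives a width-$\le k-1$ tree-decomposition $(T, \calB')$ of $H$, where $B'_t = B_t \setminus \{a\}$. The only thing that can fail is the simplicity condition: a $(k-1)$-set $X \subseteq V(H)$ could now sit in three or more bags $B'_t$, which happens precisely when $X \cup \{a\}$ was a $k$-set sitting in $\ge 2$ bags of $(T,\calB)$ \emph{and} $X$ also sat in some extra bag not containing $a$. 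Here is where I would use that the bags containing a fixed vertex form a subtree: the bags of $(T,\calB)$ containing $X \cup \{a\}$ form a subtree $T_{X\cup\{a\}}$ with at most two nodes, contained in $T_a$; and $a$ is adjacent to at least one vertex $v$ of $H$, so in $H^+$ the edge $av$ forces $v \in B_t$ for some $t \in T_a$, i.e.\ $T_a \cap T_v \ne \emptyset$. The repair is a standard surgery: take the tree-decomposition of $H$ and, along a carefully chosen edge of $T$ near $T_a$, either contract an edge or split a node so that each $(k-1)$-subset ends up in at most two bags — essentially, one can re-root the pieces of $T - T_a$ and reattach them to the two "endpoints" of $T_{X\cup\{a\}}$ in such a way that no small set is overcounted. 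The clean way to phrase this, and the way I would ultimately write it, is: the simple $k$-trees are exactly the graphs built by starting from $K_{k+1}$ and repeatedly adding a vertex joined to a $k$-clique that was \emph{not} already completed by a previous addition; in this language, if $H^+$ is a subgraph of a simple $k$-tree, then one directly exhibits $H$ as a subgraph of a simple $(k-1)$-tree by tracking how the apex $a$ and its clique-neighbourhood were created. I would cite~\cite{KU,W16} for this reformulation and give the short inductive construction.

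\textbf{Main obstacle.} The genuinely delicate step is the simplicity bookkeeping in the apex fact — verifying that after deleting $a$ one can always restore the "at most two bags per $(k-1)$-set" condition without increasing the width. In the ordinary-treewidth version (the outerplanar Claim) this is trivial because there is no simplicity condition to maintain; here it is the whole content. I expect the cleanest route is not to manipulate an arbitrary tree-decomposition but to pass to the simple-$k$-tree supergraph and use the construction-order characterization, where the apex's neighbourhood is a $k$-clique that is not "reused", so removing it drops us to a simple $(k-1)$-tree by an immediate induction on the construction sequence. Everything else — the minor-monotonicity, the contraction of earlier layers, the identification of the resulting minor — is routine and parallels the outerplanar argument already in the paper.
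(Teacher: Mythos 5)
Your overall reduction is the right one, and it is worth noting that the paper itself does not prove Lemma~\ref{lem:stw_layering} — it is quoted from~\cite{JM-weak-cols} — while the universal-vertex fact you need is stated (without proof) in Section~\ref{sec:simple-k-low}. Since $G$ is connected, $G[\bigcup_{j<i}V_j]$ is connected, so contracting it and deleting $\bigcup_{j>i}V_j$ yields a minor equal to $G[V_i]$ plus one vertex $a$ adjacent to \emph{every} vertex of $V_i$ (each vertex of $V_i$, $i\geq1$, has a neighbour in $V_{i-1}$); minor-monotonicity of $\stw$ then gives $\stw(G[V_i]+a)\leq k$, and the case $i=0$ is trivial. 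All of this is fine.

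The genuine gap is in your ``apex fact''. As you state it — $a$ adjacent to \emph{at least one} vertex of $H$ implies $\stw(H+a)\geq\stw(H)+1$ — it is false: attaching a pendant vertex to an endpoint of a path leaves the simple treewidth at $1$. So no ``standard surgery'' can rescue the argument in that generality, and the sketch also contains a concrete error: deleting $a$ from every bag of a $k$-simple decomposition of $H+a$ does \emph{not} give width at most $k-1$, because bags not containing $a$ keep up to $k+1$ vertices. What you actually need — and what your application supplies — is the fact for a \emph{dominating} apex, and then the proof is short and requires no repair step: take a $k$-simple tree-decomposition $(T,\calB)$ of $H+a$, restrict to the subtree $T_a$ of nodes whose bags contain $a$, and remove $a$ from those bags. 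Every vertex $v$ of $H$ appears in $T_a$ because the edge $av$ must be covered, and its nodes there form the subtree $T_a\cap T_v$; every edge $uv$ of $H$ is covered inside $T_a$ because the pairwise-intersecting subtrees $T_u$, $T_v$, $T_a$ share a node by the Helly property of subtrees of a tree; the width is now at most $k-1$ since every retained bag contained $a$; and simplicity is immediate, since a $(k-1)$-set $X\subseteq V(H)$ lying in three retained bags would place the $k$-set $X\cup\set{a}$ in three bags of $(T,\calB)$. This yields $\stw(G[V_i])\leq\stw(G[V_i]+a)-1\leq k-1$, completing the proof; the fallback you propose via construction orders of simple $k$-trees is then unnecessary and, as sketched, is not yet a proof.
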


For $k\geq2$ and $p\geq1$, 
let $f_k(p)$ be the maximum $p$-centered chromatic number of graphs of simple treewidth at most $k$.
We will show that $f_{k+1}(p)\leq (p+1) f_k(p)$.
Given that $f_2(p)\in\Oh(p \log p)$ this yields $f_k(p)\in\Oh(p^{k-1}\log p)$ for all $k\geq2$.
Below we proceed with the induction step for $k\geq3$.

Let $G$ be a graph and $\stw(G)\leq k$.  Fix a tree-decomposition $(T,\calB)$
of $G$ witnessing the simple treewidth and let $G^+$ be a supergraph of $G$ so
that each $B\in\calB$ induces a clique in $G^+$.  Clearly, $\stw(G^+)\leq k$
and $G^+$ is chordal.  Since every $p$-centered coloring of $G^+$ is a
$p$-centered coloring of $G$ we work with~$G^+$.
Let $(V_0,V_1,\ldots)$ be a BFS layering of $G^+$.

Let $v$ be a vertex in $G^+$ with $v\in V_i$. We define $\alpha(v)=i\!\!\mod (p+1)$.
By Lemma~\ref{lem:stw_layering} $\stw(G^+[V_i])\leq k-1$.
Let $\beta_i$ be a $p$-centered coloring of $G^+[V_i]$ 
using at most $f_{k-1}(p)$ colors.
We define $\beta(v)=\beta_i(v)$.
Finally, for a vertex $v\in V(G^+)$ we define $\phi(v)=(\alpha(v),\beta(v))$. 
Clearly, $\phi$ uses at most $(p+1)\cdot f_{k-1}(p)$ colors.
We claim that $\phi$ is a $p$-centered coloring of $G^+$.

Let $H$ be a connected subgraph of $G^+$.
We want to show that either $\phi$ uses more than $p$ colors on~$H$ or
there is a color that appears exactly once on $H$.
Let $\ell$ be minimal such that $V(H)\cap V_{\ell}\neq\emptyset$.
The set $X=V(H)\cap V_{\ell}$ induces a connected subgraph of $G^+$ by
Corollary \ref{Cor:shadow} (here we are using that $G^+$ is chordal).
Since $\beta_{\ell}$ is a $p$-centered coloring of $G^+[V_{\ell}]$ we have that
either $|\beta(X)|>p$
or there is a vertex in $X$ 
of unique color under $\beta$.
In the first case, $\phi$ takes more than $p$ values on $V(H)$.
In the second case, fix the vertex $x\in X$ of the unique color under $\beta$.
If $x$ has a unique color under $\phi$ in $H$, then we are done.
Otherwise, let $x'\in V(H)$ be a vertex with $x\neq x'$ and $\phi(x)=\phi(x')$.
Let $x'\in V_{\ell'}$.
Since~$x$ has a unique color in $V(H)\cap V_{\ell}$, we get that $\ell\neq\ell'$.
Since $\alpha(x)=\alpha(x')$ we conclude that $\ell'-\ell\geq p+1$.
Since $x$ and~$x'$ are two vertices in a connected graph~$H$ 
we know that $H$ must intersect every layer $V_k$ with $\ell\leq k \leq \ell'$.
This means that~$\alpha$ takes all possible $p+1$ values on the vertices of $H$.
Therefore, $\phi$ uses at least $p+1$ colors on~$H$, as desired.

The proof implies an algorithm for coloring simple $k$-trees. This algorithm
may have many calls for the corresponding algorithm for $(k-1)$-trees but, 
since the graphs of these calls are disjoint, 
the overall time complexity remains $\Oh(n)$ for fixed $k$.

\section{Lower bound for graphs of bounded treewidth} 
\label{sec-lower-bounds-bounded-tw-linear}
\def\RT{R} 
\def\TT{T} 


Let $p$ be a positive integer. A vertex coloring $\phi$ of a graph $G$ is
\emph{$p$-linear} if for every path $P$ in $G$ either $\phi$ uses more than
$\phi$ colors on $P$ or there is a color that appears exactly once on $P$.
The \emph{$p$-linear chromatic number} $\lin_p(G)$ of $G$ is the minimum
integer $k$ such that there is a $p$-linear coloring of $G$ using $k$ colors.

Clearly, every $p$-centered coloring is a $p$-linear coloring. 
Thus, $\chi_p(G) \leq \lin_p(G)$ for every graph $G$.
We show below that 
for every $p\geq0$ and $t\geq0$, there is a graph $G$ of treewidth at most $t$ with 
\[
\lin_p(G) \geq \binom{p+t}{t}.
\]

We start with the key definition for our inductive construction.
For an integer $x$ with $x\geq1$, a rooted tree $\RT$ is \emph{$x$-ary}, 
if every non-leaf vertex of $\RT$ has degree at least $x$. 
The \emph{depth} of a vertex $v$ in a rooted tree~$\RT$ 
is the number of vertices on the path between $v$ and the root in $\RT$.
A rooted tree $\RT$ has \emph{depth} $d$ if every leaf in~$\RT$ has depth $d$.
A subgraph $\RT$ of a graph $G$ is $\emph{$(p,d,x)$-brushed}$ if:
\begin{enumerate}[label=\textup{(\roman*)},ref=\textup{\roman*},nosep]
  \item $\RT$ is an $x$-ary tree of depth $d$; and
  \item for every vertex $v$ in $\RT$,
  if $i\in[d]$ is the depth of $v$ in $\RT$, then
  there exist indices $j_1,\ldots,j_k$ with $1\leq k \leq p$ and $i=j_1 < \cdots < j_k=d$ such that for every leaf $w$ of $\RT$ that lies in the subtree of $v$, 
  if $u_i\cdots u_d$ is a path in $\RT$ connecting $u=u_i$ with $w=u_d$, then
  the sequence $u_{j_1}\cdots u_{j_k}$ induce a path in~$G$.
  \label{def-brushed-ii}
\end{enumerate}

We construct a family of graphs $\set{G_{(p,t,x,N)}\mid p\geq0,\ t\geq0,\ x\geq2,\ N\geq1}$ such that the following invariants hold:
\begin{enumerate}
\item $\tw(G_{(p,t,x,N)}) \leq t$; and
\label{enu-invariant-tw-linear-version}
\item for every integer $p'$ with $p'\geq p$ and every $p'$-linear coloring $\phi$ of $G_{(p,t,x,N)}$ either:
\label{enu-invariant-general-for-colorings}
\begin{enumerate}
\item $\phi$ uses at least $N$ colors on $G_{(p,t,x,N)}$, or 
\label{enu-invariant-N-colors}
\item 
$G$ has a $\left(p,\binom{p+t}{t},x\right)$-brushed subgraph $\RT$ and
there is a sequence $\left(\alpha_1,\ldots,\alpha_{\binom{p+t}{t}}\right)$ of distinct colors so that 
for every $i\in\left\{1,\ldots,\binom{p+t}{t}\right\}$ and every vertex $v$ of depth $i$ in $\RT$, we have $\phi(v)=\alpha_i$.
\label{enu-invariant-x-ary-tree-linear-version}
\end{enumerate}
\end{enumerate}
This family, in particular $G_{\left(p,t,2,\binom{p+t}{t}\right)}$, will witness the statement of the theorem.

We start the construction with the base cases. 
Let $x\geq2$ and $N\geq1$.
For every $p\geq0$ and every $t\geq0$ the graphs $G_{(0,t,x,N)}$ and $G_{(p,0,x,N)}$ contain just a single vertex.
Since these graphs have no edges, their treewidth is $0\leq t$, so invariant \eqref{enu-invariant-tw-linear-version} holds.
A single vertex is also an $x$-ary tree of depth $1$, so invariant \eqref{enu-invariant-x-ary-tree-linear-version} holds.

For the inductive step let $p\geq1$, $t\geq1$, $x\geq2$, and $N\geq1$. 
Let $M = \binom{p+t-1}{t-1}$ and $X = (x-1)N^M +1$.
The graph $G_{(p,t,x,N)}$ is obtained from a copy $G^0$ of $G_{(p-1,t,X,N)}$
by adding for every vertex $v$ of $G^0$ disjoint copies $G^{v,1},\ldots,G^{v,X}$ of
$G_{(p,t-1,x,N)}$ in such a way that each vertex of $G^{v,i}$ with $i\in[X]$ is adjacent to
$v$ and to no other vertices outside~$G^{v,i}$.

First, we argue that $\tw(G_{(p,t,x,N)})\leq t$. Take a tree-decomposition
$(\TT^0,\calB^0)$ of $G^0$ of width $\leq t$ and for each $v\in V(G^0)$ and each
$i\in[X]$ take a tree-decomposition $(\TT^{v,i},\calB^{v,i})$ of $G^{v,i}$ of
width $\leq t-1$.  We construct a tree-decomposition $(\TT,\calB)$ of
$G_{(p,t,x,N)}$ as follows.  Let $V(\TT)= V(\TT^0)\cup\bigcup V(\TT^{v,i})$.  The
edges of $\TT$ include all the edges of $\TT^0$ and all the edges of $\TT^{v,i}$ for
every $v\in V(G^0)$ and $i\in[X]$.  Additionally, for every $v\in V(G^0)$, fix
some $t_v\in V(\TT)$ such that $v\in B^0_{t_v}\in\calB^0$ and for each $i$ fix
some vertex $t^{v,i}$ of $\TT^{v,i}$.  In~$\TT$ the tree $\TT$ is connected to
$\TT^{v,i}$ with the extra edge $t_v t^{v,i}$. In simple words, we make a tree
out of the forest formed by previous trees.  The set $\calB$ of bags for $\TT$
are defined as follows: for every $t\in V(\TT^0)$ let $B_t = B^0_t$ where
$B^0_t \in \calB^0$, and for every $t \in V(\TT^{v,i})$ let
$B_t = B^{v,i}_t\cup\set{v}$.  It is elementary to verify that $(\TT,\calB)$ is
a tree-decomposition of $G_{(p,t,x,N)}$ and the width of $(\TT,\calB)$ is at
most $t$.

Let $p'\geq p$ and let $\phi$ be a $p'$-linear coloring of $G_{(p,t,x,N)}$.
We want to show that \eqref{enu-invariant-general-for-colorings} is satisfied for the coloring
$\phi$.  
First of all, if $\phi$ uses at least $N$ colors, then~ \eqref{enu-invariant-N-colors} holds.
Therefore from now on we assume that $\phi$ uses less than $N$ colors.

Since $G^0$ is isomorphic to $G_{(p-1,t,X,N)}$ we have that $G^0$ has a
$\left(p,\binom{p-1+t}{t},X\right)$-brushed subgraph that satisifes
\eqref{enu-invariant-x-ary-tree-linear-version}.  We fix a witnessing $X$-ary
tree $\RT^0$ of depth $\binom{p-1+t}{t}$ and the respective sequence of colors
$C^0=\left(\alpha_1,\ldots,\alpha_{\binom{p-1+t}{t}}\right)$ such that
$\phi(u)=\alpha_i$, for every 
vertex $u$ of depth $i$ in $\RT^0$.

Let $u$ be a leaf of $\RT^0$ and $j\in [X]$.  We claim that no vertex in
$G^{u,j}$ is colored by~$\phi$ with a color from $C^0$. In order to get a
contradiction, suppose that $\phi(v) =\alpha_i$ for some~$v$ in $G^{u,j}$ and
$i\in\left[\binom{p-1+t}{t}\right]$.  If $i=\binom{p-1+t}{t}$, then
$\phi(u) = \alpha_i = \phi(v)$, while $u$ and $v$ are adjacent in
$G_{(p,t,x,N)}$.  Since $p'\geq p\geq1$, the connected subgraph of
$G_{(p,t,x,N)}$ induced by $\set{u,v}$ contradicts the fact that $\phi$ is
$p'$-linear.  Now assume that $1\leq i< \binom{p-1+t}{t}$.  Let~$u_i$
and~$u_{i+1}$ be the ancestors of $u$ in $\RT^0$ of depth $i$ and $i+1$,
respectively; ($u_{i+1} = u$ is possible).  Since $X\geq2$, $u_i$ has some
child $u'$ distinct from $u_{i+1}$.  Let $u''$ be a leaf (vertex of depth
$\binom{p-1+t}{t}$) of $\RT^0$ in the subtree of $u'$.  Consider the paths
$u_i \cdots u_{\binom{p-1+t}{t}}$ and $w_i \cdots w_{\binom{p-1+t}{t}}$ in
$\RT$ where $u_{\binom{p-1+t}{t}}=u$, $w_i=u_i$, $w_{i+1}=u'$, and
$w_{\binom{p-1+t}{t}}=u''$.  Since~$\RT^0$ is
$\left(p,\binom{p-1+t}{t},X\right)$-brushed in $G^0$ there exist indices
$j_1,\ldots,j_k$ with $1\leq k \leq p-1$ and
$i=j_1 < j_2 < \cdots < j_k=\binom{p-1+t}{t}$ such that
$u_{j_1}\cdots u_{j_k}$ and $w_{j_1}\cdots w_{j_k}$ are paths in $G^0$
connecting $u_i = u_{j_1}$ with $u = u_{j_k}$ and $u_i = w_{j_1}$ with
$u'' = w_{j_k}$, respectively.  Consider the path
$P = u_{j_k} \cdots u_{j_1} w_{j_2} \cdots w_{j_k}$ connecting $u=u_{j_k}$ and
$u''=w_{j_k}$ in $G^0$ and recall that
$\phi(u_{j_{\ell}}) = \alpha_{j_{\ell}} =\phi(w_{j_{\ell}})$, for all
$\ell\in[k]$.  Thus, $\phi$ uses exactly $k$ colors on $P$ and the only unique
color on $P$ is $\phi(u_i) = \alpha_i$.  Now since $v$ and $u_{j_k}=u$ are
adjacent in $G_{(p,t,x,N)}$, the following sequence is a path in
$G_{(p,t,x,N)}$:
\[
v u_{j_k} \cdots u_{j_1} w_{j_2} \cdots w_{j_k}.
\]
Since $\phi(v) =\alpha_i = \phi(u_i)$, there is no color used by $\phi$ exactly once on that path. 
Furthermore, $\phi$ uses exactly $k \leq p-1 \leq p'$ colors on that path. 
This contradicts the fact that $\phi$ is $p'$-linear coloring of $G_{(p,t,x,N)}$.
Thus, no vertex in $G^{u,j}$ is colored by $\phi$ with a color from $C^0$

Let $u$ be a leaf of $\RT^0$ and $j\in [X]$. 
Since $G^{u,j}$ is isomorphic to $G_{(p,t-1,x,N)}$, we have that
$G^{u,j}$ has a $\left(p,M,x\right)$-brushed subgraph $\RT^{u,j}$ that satisifes \eqref{enu-invariant-x-ary-tree-linear-version}.
We fix a witnessing $x$-ary tree $\RT^{u,j}$ of depth $M$ and the respective sequence of colors $C^{u,j}=\left(\beta^{u,j}_1,\ldots,\beta^{u,j}_{M}\right)$ such that $\phi(v)=\beta^{u,j}_k$, 
for every $k\in\left[M\right]$ and every vertex $v$ of depth $k$ in $\RT^{u,j}$.
It follows from the previous paragraph that $\alpha_i \neq \beta^{u,j}_k$, for all $i\in\left[\binom{p-1+t}{t}\right]$ and $k\in\left[M\right]$.

Since $\phi$ takes less than $N$ values, there are less then $N^{M}$ possibilities for
the color sequence $C^{u,j}$.
Since $X = (x-1)N^{M}+1$, we find $x$ values $j\in [X]$ such that the
color sequences $C^{u,j}$ are identical, we let
$C^{u}=\left(\beta^u_1,\ldots,\beta^u_{M}\right)$ be this repeated color sequence.

We now perform a bottom-up traversal on  $\RT^{0}$.
We start by marking all the leaves as being \emph{visited}.
When reaching an internal vertex $v$ of $\RT^{0}$ its children
$u_1,\ldots,u_X$ have been visited and each of them has an
associated color sequence $C^{u_j}$. Again, since $X = (x-1)N^{M}+1$,
we find $x$ values $j\in [X]$ such that the
color sequences $C^{u_j}$ are identical, we define
$C^{v}$ to be this repeated color sequence and mark
$v$ as visited.

When all vertices have been maked visited every internal vertex $v$ in $\RT_0$
has $x$ children $u_1,\ldots,u_x$ such that $C^v=C^{u_i}$ for all $i\in[x]$.
This way starting from the root of $\RT^0$ we can filter out a subtree $\RT'$
of $\RT^0$ such that $\RT'$ is an $x$-ary tree of depth $\binom{p-1+t}{t}$
rooted at the root of $\RT_0$ with the properties that
\begin{enumerate}[label=\textup{(\roman*)},nosep]
\item for every vertex $v$ in $\RT'$, $v$ has the same depth in $\RT'$ as in $\RT^0$; and
\item there is a sequence $C=(\beta_1,\ldots,\beta_M)$ such that $C^v=C$ for all $v$ in $\RT'$.
\end{enumerate}

Finally, we define a tree $\RT$ as a tree obtained from $\RT'$ by attaching to
each leaf $u$ of~$\RT'$ exactly $x$ trees among $\RT^{u,j}$ for $j\in[X]$ such
that $C^{u,j}=C$.  Therefore $\RT$ is an $x$-ary tree.  Note also that the
depth of $\RT$ is the sum of the depth of $\RT'$ and $M=\binom{p+t-1}{t-1}$
(which is the depth of each $\RT^{u,j}$).  Thus, the depth of $\RT$ is
\[
\binom{p-1+t}{t} + \binom{p+t-1}{t-1} = \binom{p+t}{t}.
\]

We claim that $\RT$ is $\left(p,\binom{p+t}{t},x\right)$-brushed in $G_{(p,t,x,N)}$. 
To prove this, we need to verify the item~\eqref{def-brushed-ii} of the definition.
Let $v$ be a vertex in $\RT$ and $i$ be the depth of $v$ in $\RT$.

If $i> \binom{p-1+t}{t}$, then $v$ lies in one of the trees attached to $\RT'$
in the construction of $\RT$, say $v$ is in~$\RT^{u,j}$ for some leaf $u$ of
$\RT^0$ and $j\in[X]$.  Clearly, the depth of $v$ in $\RT^{u,j}$ is
$i'=i-\binom{p-1+t}{t}$.  Since $\RT^{u,j}$ is $(p,M,x)$-brushed in $G^{u,j}$,
we get the indices $j_1,\ldots,j_k$ with $1\leq k\leq p$ and
$i'=j_1 < \cdots < j_k = M$ such that for every leaf $w$ of $\RT^{u,j}$ that
lies in the subtree of $v$, if $u_{i'}\cdots u_M$ is a path in $\RT^{u,j}$
connecting $v=u_{i'}$ with $w=u_M$, then the sequence $u_{j_1}\cdots u_{j_k}$
induce a path in $G^{u,j}$.  Thus, in this case~\eqref{def-brushed-ii} holds
for $v$.

If $i\leq \binom{p-1+t}{t}$, then $v$ lies in $\RT'$.
Since $\RT^0$ is $(p-1,\binom{p-1+t}{t},X)$-brushed in $G^0$, we get the indices $j_1,\ldots,j_k$ with 
$1\leq k\leq p-1$ and $i=j_1 < \cdots < j_k = \binom{p-1+t}{t}$ such that for every leaf $w$ of $\RT^{0}$ that lies in the subtree of $v$, if $u_{i}\cdots u_{\binom{p-1+t}{t}}$ is a path in $\RT^0$ connecting $u=u_{i}$ with $w=u_{\binom{p-1+t}{t}}$, then the sequence $u_{j_1}\cdots u_{j_k}$ induces a path in $G^0$.
Consider a sequence $j_1,\ldots,j_k$ extended with $j_{k+1} = \binom{p+t}{t}$. 
Note that its length is $k+1 \leq p$.
Now consider any leaf $w'$ of $\RT$ that lies in the subtree of $v$. 
Let $w$ be the leaf of $\RT^0$ such that $w'$ lies in $\RT^{w,j}$ for some $j\in[X]$. 
Let $u_i\cdots u_{\binom{p+t}{t}}$ be the path in $\RT$ between $u_i=v$ and $u_{\binom{p+t}{t}}=w'$.
We claim that $u_{j_1}\cdots u_{j_{k+1}}$ is a path in $G_{(p,t,x,N)}$.
Indeed, $u_{j_1}\cdots u_{j_{k}}$ is a path in $G^0$ and $u_{j_k}=w$ is adjacent with $u_{j_{k+1}}=w'$ in $G^{u,j}$.
This completes the proof that $\RT$ is $\left(p,\binom{p+t}{t},x\right)$-brushed in $G_{(p,t,x,N)}$.

It remains to argue that the item~\eqref{enu-invariant-x-ary-tree-linear-version} holds for the coloring $\phi$ and the subgraph $\RT$ in $G_{(p,t,x,N)}$.
Consider the concatenated color sequence
\[
\left(\alpha_1,\ldots,\alpha_{\binom{p-1+t}{t}},\beta_1,\ldots,\beta_M\right).
\]
Let $i\in\left[\binom{p+t}{t}\right]$ and let $v$ be a vertex of depth $i$ in $\RT$.
If $i \leq \binom{p-1+t}{t}$, then $v$ is a vertex in $\RT^0$ and as such $\phi(v)=\alpha_i$. 
If $i > \binom{p-1+t}{t}$, then $v$ is a vertex in $\RT^{u,j}$ for some $u$ leaf of $\RT^0$ and $j\in[X]$. 
Since $\RT^{u,j}$ was chosen to attach to $\RT'$ in the construction of $\RT$, we have $C^{u,j}=C$.
Therefore, $\phi(v) = \beta_{i - \binom{p-1+t}{t}}$, as required.

\enlargethispage{3mm}
Therefore, $G_{(p,t,x,N)}$ satisfies both invariants.
This completes the proof.

\section{Lower bounds for graphs of bounded simple treewidth}
\label{sec-lower-bounds-subplanars}

In this section we prove Theorem~\ref{thm:simpletree}.\ref{enu-simpletree-low}.
We first deal with the case $k=2$, i.e., with the case of outerplanar graphs,
which was independently stated as Theorem~\ref{thm:pb+p-bounds}.\ref{enu-outerplanar-low}. 
We then generalize the proof for the case of larger $k$. The case $k=3$
yields Theorem~\ref{thm:pb+p-bounds}.\ref{enu-stackedtri-low}.

\subsection{Proof of the lower bound for outerplanar graphs}\hfill
\def\lll{^{(\ll)}}
\def\rrr{^{(\rr)}}
\def\ll{\ell}
\def\rr{r}
\def\sii{^{(\si)}}
\def\sib{^{(\sb)}}
\def\si{\sigma}
\def\sb{\bar{\sigma}}
\def\logs{\lfloor\log s\rfloor}
\def\logp{\lfloor\log p\rfloor}

\subsubsection*{1.  Constructing the family of graphs}\hfill

The \emph{tree of fans} $F(w,d)$ is obtained from a rooted complete
$w$-ary tree of depth~$d$ by connecting the children of each non-leaf 
vertex with a path. Note that $F(w,d)$ is outerplanar, the vertices
are partitioned into $d+1$ levels. Level 0 consists of the root vertex and
level $j$ has $w^j$ vertices, hence there are $\sum_{j=0}^d w^j$ vertices in total.

Let $s=\lfloor p/2\rfloor$ and $t = \lfloor s/2 \rfloor +1$ and
$f_2(p) = t\logs$.  We let $H$ be a tree of fans $F( s f_2(p), t )$ and
$\phi$ be a $p$-centered coloring of $H$. We claim that $\phi$ uses at
least $f_2(p)$ colors.

\subsubsection*{2.  A clean subgraph of $H$}\hfill

We identify a \emph{clean} subgraph $H^*$ of $H$ isomorphic to $F(s,t)$. 
The cleaning is done top-down. The root $v_0$ is
\emph{clean} by definition. If a vertex~$v$ is identified as clean we look at
the coloring of the path~$P_v$ on its children.  Let $U_v$ be the set of
unique colors, i.e., the set of colors which occur only once on~$P_v$.  If
$|U_v| \geq f_2(p)$ we have a proof that $\phi$ uses at least $f_2(p)$
colors. Otherwise considering the length of~$P_v$ which is $s f_2(p)$ and the
bound $|U_v|<f_2(p)$ we conclude that $P_v$ contains a subpath of size $s$
which contains no vertex with a unique color, i.e., every color that appears
on the subpath is assigned to at least two vertices of $P_v$. Fix such a
subpath $Q_v$ and declare its vertices to be \emph{clean}.  Then $H^*$ is
simply the subgraph of $H$ induced by the clean vertices.

\subsubsection*{3.  The spine for the master caterpillar of $H^*$}\hfill

A \emph{caterpillar} rooted at $w_0$ of \emph{depth} $d$ in $H$ consists of a
\emph{spine}, this is a path $w_0,w_1,\ldots w_d$ such that~$w_{i+1}$ is a
child of $w_i$, additionally there may be \emph{hairs}, they are leaves
attached to vertices of the spine.  We require that hairs attached to a spine
vertex $w$ are children of $w$, moreover, $w_d$ has no attached hairs, whence,
we also count it as a hair attached to $w_{d-1}$.  A caterpillar in $H$ is
\emph{$H^*$-based} if all spine vertices belong to $H^*$.

Starting from the root $v_0$ of $H^*$ we are going to identify a path
$S=v_0,v_1,\ldots,v_{t}$ in~$H^*$ such that $v_i$ is a vertex of level $i$.
Later we will see that there is a caterpillar $K_0$ in~$H$ with spine $S$ 
such that $\phi$ uses at least~$f_2(p)$ colors on $K_0$. 

Suppose the spine vertex $v_{\ell-1}$ has been identified. We now describe a
procedure to find~$v_\ell$. Let $Q$ be the path of clean children of $v_{\ell-1}$ in
$H^*$, this path consists of $s$ vertices. With iterated halving we identify a
sequence of nested intervals $Q=Q_0\supset Q_1 \supset\cdots$ which closes in
at a single vertex $v_\ell$.  The iteration requires at least $\logs$ steps.
When~$Q_j$ has been identified let $U_j$ be the set of unique colors on
$Q_j$. Since $|Q_j|\leq s\leq p$ and~$\phi$ is $p$-centered
$U_j\neq\emptyset$. Divide $Q_j$ into its left and right half $Q_j\lll$
and~$Q_j\rrr$, such that $||Q_j\lll| - |Q_j\rrr||\leq 1$. Let $U_j\lll$ and
$U_j\rrr$ be the subsets of those colors of $U_j$ which appear in $Q_j\lll$ and $Q_j\rrr$,
respectively. It may happen that one of $U_j\lll$ and $U_j\rrr$ is empty.

In the following we let $\si\in\{\ll,\rr\}$ and $\sb$ be such that
$\{\si,\sb\}=\{\ll,\rr\}$.  Let $d\sii$ be the minimum depth of an
$H^*$-based caterpillar rooted at a vertex of $Q_j\sii$ which contains
all the colors of $U_j\sib$.  If there is no such caterpillar we let
$d\sii=p$. 

\begin{klaim}\label{lem:h1+h2}
  $(d\lll-1)+(d\rrr-1) > p - |Q_j|$.
\end{klaim}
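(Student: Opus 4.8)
The plan is to prove the inequality by contradiction: assume $(d\lll-1)+(d\rrr-1)\le p-|Q_j|$, and from this construct a connected subgraph $H'$ of $H$ on which $\phi$ uses at most $p$ colors while no color appears exactly once, contradicting that $\phi$ is $p$-centered. First I would dispose of the easy cases. Since $\phi$ is $p$-centered and $|Q_j|\le s\le p$, the path $Q_j$ has a uniquely colored vertex, so $U_j=U_j\lll\cup U_j\rrr\neq\emptyset$ and at most one of $U_j\lll,U_j\rrr$ is empty; moreover, if either $d\sii=p$ then the asserted inequality holds outright (using $|Q_j|\ge 2$ and $U_j\ne\emptyset$), so we may assume both $d\lll$ and $d\rrr$ are realized by genuine $H^*$-based caterpillars.

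So fix, for $\si\in\{\ll,\rr\}$, a caterpillar $C\sii$ with the fewest vertices among all $H^*$-based caterpillars of depth $d\sii$ rooted at a vertex $a\sii\in Q_j\sii$ and containing every color of $U_j\sib$; if $U_j\sib=\emptyset$ this is just the single vertex $a\sii$ and $d\sii=0$. Two features of $C\sii$ will be used. By minimality, when $d\sii\ge1$ every leaf of $C\sii$ -- all hairs, together with the deepest spine vertex, which the definition counts as a hair -- carries a color from $U_j\sib$: deleting a leaf whose color is not in $U_j\sib$ would give a caterpillar of the same depth with fewer vertices still containing $U_j\sib$, and deleting the deepest spine vertex in that situation would give a shallower one, either way a contradiction. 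Secondly, for each spine vertex $w$ of $C\sii$ other than its root, the parent $\hat w$ of $w$ on the spine lies in $H^*$, and since $w$ is a \emph{clean} child of $\hat w$, the color $\phi(w)$ occurs at least twice among the children of $\hat w$ in $H$ -- this is exactly the property by which the clean subgraph was chosen in the cleaning step. Fix for each such $w$ a second child $w'\neq w$ of $\hat w$ with $\phi(w')=\phi(w)$, the \emph{twin} of $w$.

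Now let $H'$ be the union of $Q_j$, of $C\lll$ and $C\rrr$, and of all the twins $w'$. It is connected: $Q_j$ is a subpath of a layer path, the roots $a\lll,a\rrr$ lie on it, each $C\sii$ attaches to $Q_j$ at $a\sii$, and each twin attaches to a spine vertex already present. For the number of colors: $\phi$ uses at most $|Q_j|$ colors on $Q_j$, and inside $C\sii$ the only vertices whose color need not already appear on $Q_j$ are the at most $\max(d\sii-1,0)$ interior spine vertices, since the root is $a\sii\in Q_j$ and every leaf color lies in $U_j\sib$ and hence appears on $Q_j\sib$; the twins contribute no new color. When both $d\sii\ge1$ this gives at most $|Q_j|+(d\lll-1)+(d\rrr-1)\le p$ colors. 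When, say, $U_j\rrr=\emptyset$ and thus $d\lll=0$, the colors on $Q_j$ cannot all be distinct -- otherwise $U_j$ would equal the set of colors on $Q_j$ and split into two nonempty parts -- so $\phi$ uses at most $|Q_j|-1$ colors on $Q_j$, and the total is again $(|Q_j|-1)+(d\rrr-1)=|Q_j|+(d\lll-1)+(d\rrr-1)\le p$.

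It remains to see that no color is unique on $H'$. A color of $U_j\sii$ appears once on $Q_j$, in $Q_j\sii$, and, since $C\sib$ contains all of $U_j\sii$, at least once more on $C\sib$; that is a different vertex, because the vertex of $Q_j$ of that color lies in $Q_j\sii$ whereas the only level-$\ell$ vertex of $C\sib$ is its root $a\sib\in Q_j\sib$. Every other color of $Q_j$ already repeats on $Q_j$. A color on an interior spine vertex of some $C\sii$ repeats on that vertex's twin. A color on a leaf of $C\sii$ lies in $U_j\sib$ and was handled above, and the roots $a\sii$ lie on $Q_j$. Hence every color of $H'$ occurs at least twice, so the $p$-centeredness of $\phi$ is violated, completing the proof of the claim. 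The step I expect to be the main obstacle is the color count: it is tight, it needs the separate treatment of the case where one of $U_j\lll,U_j\rrr$ is empty, and it relies on the bookkeeping that a vertex-minimal caterpillar introduces new colors only through its interior spine -- which is precisely what lets the twin trick, fueled by the cleanness of $H^*$, eliminate every remaining candidate for a unique color.
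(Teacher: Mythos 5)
Your proof is correct and follows essentially the same route as the paper's: assume $(d\lll-1)+(d\rrr-1)\le p-|Q_j|$, take caterpillars realizing $d\lll,d\rrr$ normalized so that all hairs (including the last spine vertex) reuse colors of $U_j\sib$, bound the colors of the union with $Q_j$ by $|Q_j|+(d\lll-1)+(d\rrr-1)\le p$, and use cleanness of the $H^*$-based spines to adjoin same-colored sibling ``twins'' so that no color is unique, contradicting $p$-centeredness. The only differences are cosmetic bookkeeping: you add twins for every interior spine vertex rather than only for the colors that end up unique, and you treat the degenerate cases ($d\sii=p$ or $U_j\sib=\emptyset$) explicitly, which the paper glosses over.
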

\begin{proof}
  For $\si\in\{\ll,\rr\}$ let $K\sii$ be an $H^*$-based caterpillar of depth $d\sii$
  rooted at a vertex of $Q_j\sii$ which collects all colors of
  $U_j\sib$.  
  We assume that every hair in $K\sii$ has a color from $U\sib$ as 
  otherwise  we simply remove it from $K\sii$.
  Let $\Gamma_0$ be the subgraph of $H$ which is obtained
  as the union of $Q_j$, $K\lll$, and~$K\rrr$.
  Note that the number of colors used by $\phi$ on $\Gamma_0$ is bounded 
  by $|Q_j| + (d\lll-1) + (d\rrr-1)$ as hairs, including the last vertices of the spines, 
  only reuse colors used on $Q_j$.

  In order to get a contradiction, suppose that $(d\lll-1)+(d\rrr-1)\leq p - |Q_j|$. 
  Then we have that the number of colors used by $\phi$ on $\Gamma_0$ is bounded by 
\[
|Q_j| + (d\lll-1) + (d\rrr-1) \leq |Q_j| + p - |Q_j| = p.
\]
  Next we are going to construct a connected supergraph $\Gamma$ of
  $\Gamma_0$ which contains the same colors as $\Gamma_0$, i.e., at
  most $p$, and has no unique color. Such a $\Gamma$ can not exist
  because $\phi$ is a $p$-centered coloring of $H$.

  Consider a color $\alpha$ that is used exactly once in $\Gamma_0$.
  Since all the colors used exactly once at $Q_j$ are repeated in $K\lll$ or $K\rrr$,
  the color $\alpha$ must appear in one of the $K\sii$, say~$K\lll$.
  Since all the colors of the hairs are also used at $Q_j$, the color $\alpha$
  must be used at a vertex $v$ of the spine of $K\lll$ but not within $Q_j$.
  Therefore the parent of $v$, say $v'$, also belongs to $K\lll$.
  Since $K\lll$ is $H^*$-based, $v$ and $v'$ belong to $H^*$, i.e., they are clean.
  Therefore there is a child $v''$ of $v'$ in $H$ with $v''\neq v$ and $\phi(v'')=\phi(v)=\alpha$.
  We add such a vertex $v''$ to $\Gamma$.
  We apply this procedure for every unique color $\alpha$ of $\Gamma_0$.
  
  The resulting graph $\Gamma$ is clearly connected and $|\phi(\Gamma)| = |\phi(\Gamma_0)|\leq p$ 
  while $\Gamma$ has no unique color under $\phi$.
  This contradiction completes the proof of the claim.
\end{proof}

{}From the claim we get a  $\si\in\{\ll,\rr\}$ with
$d\sii -1 \geq \frac{1}{2}(p-|Q_j|) \geq \frac{1}{2}(p - s) \geq \frac{p}{4} \geq t-1$.
Use this $\si$ to define $Q_{j+1}= Q_j\sii$ and $A_{j+1} = U_j\sib$.

The iterated halving ends with a vertex $v_\ell$ and a sequence
$A_1,A_2,\ldots,A_{\logs}$ of sets of colors. From the construction it follows that
$A_{i+1}$ is the set of unique colors of $Q_i$ which appear in
$Q_i\sib$ while for $j>i+1$ the colors of $A_j$ appear in
$Q_i\sii$. This shows that the sets $A_1,A_2,\ldots,A_{\logs}$ are pairwise
disjoint. Define $B_\ell=\bigcup_{i=1}^{\logs}A_i$. From the definition of
$d\sii$ and the inequality $d\sii\geq t$ we can deduce the following
observation, which will be crucial:
\begin{quote}
  Every $H^*$-based caterpillar of depth at most ${t-1}$ rooted at $v_\ell$
  misses at least one color from each $A_i$, i.e., it misses at least
  $\logs$ colors from $B_\ell$.
\end{quote}
\goodbreak

\subsubsection*{4.  Color collecting sub-caterpillars of the master}\hfill

Having defined the vertices $v_0,v_1,\ldots,v_{t}$ we let $K_\ell$ be the
caterpillar with spine $v_\ell,\ldots,v_{t}$ which includes all the children
of~$v_i$ in $H^*$ for each $i=\ell,\ldots,{t-1}$.  With~$C_\ell$ we denote the
set of colors of $K_\ell$.  Since for $0\leq j<\ell\leq t$ all vertices of
$K_\ell$ are also vertices of $K_j$ we get
$C_0\supset C_1\supset\cdots\supset C_t$.  Note that~$B_{\ell}$ is a set of
colors used by children of~$v_{\ell-1}$, therefore,
$B_\ell\subset C_{\ell-1}$.  For $\ell>0$, the caterpillar $K_\ell$ is an
$H^*$-based caterpillar of depth $t- \ell \leq {t-1}$ rooted at $v_\ell$,
therefore, $K_\ell$ misses at least $\logs$ colors from~$B_\ell$. Together
this shows that $|C_{\ell-1}| \geq |C_\ell|+\logs$, whence $|C_0|\geq t\logs$.

\subsection{Proof of the lower bound for simple treewidth k}\hfill
\label{sec:simple-k-low}

The proof of the lower bound for graphs of simple treewidth $k$ is based on the
same four steps as the proof of the lower bound for outerplanar graphs.

\subsubsection*{1.  Constructing the family of graphs}\hfill

The underlying graph for the lower bound is $G_k(w,d)$.

We give a recursive definition: $G_2(w,d) = F(w,d)$ where $F(w,d)$ is the tree
of fans, the vertices of level $d$ of this tree of fans constitute the
\emph{boundary} $L_2$ of $G_2(w,d)$.  Having defined $G_{j-1}(w,d)$ and its
boundary $L_{j-1}$ we aim at defining  $G_j(w,d)$.  We introduce an
additional parameter $\delta$ running from 1 to $d$ and graphs
$G_j(w,d,\delta)$. We then define $G_j(w,d)= G_j(w,d,d)$.

The graph $G_j(w,d,1)$ is obtained from $G_{j-1}(w,d)$ by making the root $v$
an universal vertex, i.e., for every vertex $u$ of the graph we ensure that the edge
$(u,v)$ belongs to the graph.  The boundary of $G_j(w,d,1)$ is the boundary
$L_{j-1}$ of $G_{j-1}(w,d)$.  From $G_j(w,d,\delta)$ we obtain
$G_j(w,d,\delta+1)$ by taking a copy $G(u)$ of $G_j(w,d,1)$ for each $u$ in
the boundary and identifying the root of $G(u)$ with $u$. The boundary of
$G_j(w,d,\delta+1)$ is the union of the boundaries of all the new subgraphs
$G(u)$.

The following properties of $G_k(w,d)$ will be useful.

\noindent (1) $G_k(w,d)$ contains a spanning subgraph isomorphic to
$F(w,d^{k-1})$.  Indeed, if we modify the definition slightly by deleting the
instruction \emph{make the root an universal vertex}, then througout the construction we only glue new
trees of fans to the boundary vertices of an already constructed tree of fans.
The boundary vertices are always the vertices of the largest level. This shows
that we obtain a tree of fans. For the parameters use induction and the fact that
the depth of the tree of fans underlying $G_j(w,d)$ is $d$ times
the depth of the tree of fans underlying $G_{j-1}(w,d)$.

\noindent (2) The simple treewidth of $G_j(w,d)$ is at most one more
than the simple treewidth of $G_{j-1}(w,d)$, whence  by induction $\stw(G_k(w,d)) \leq k$.

First note that the following argument implies $\stw(G_j(w,d,1)) \leq \stw(G_{j-1}(w,d))+1$.
If $\stw(G) = t$ and $G^+$ is obtained from $G$ by making some vertex
$v$ of $G$ an universal vertex,
then $\stw(G-v) \leq t$ and $\stw(G^+) = \stw(G-v) + 1$.

Since every 2-connected component of $G_j(w,d)$ is a copy of $G_j(w,d,1)$ we also
have $\stw(G_j(w,d)) \leq \stw(G_j(w,d,1))$.

\smallskip

Let $s=\lfloor \frac{p}{k} \rfloor$, and
$t =  \frac{k-1}{k}\frac{p}{2} $ and $h = \lfloor \frac{t}{k-1} \rfloor +1 =
\lfloor \frac{p}{2k} \rfloor +1$.  We also define 
$f_k(p) = h^{k-1} \logs$.

Let $H$ be a $G_k(s f_k(p), h)$ and $\phi$ be a $p$-centered coloring of
$H$. We claim that $\phi$ is using at least $f_k(p)$ colors.

\subsubsection*{2.  A clean subgraph of $H$}\hfill

In a first step we define a \emph{clean} subgraph $H^*$ of $H$. Let $F^*$ be
obtained by the cleaning procedure of the previous section applied to the tree
of fans $F(s f_k(p),h^{k-1})$ contained in $H$.  Note that $F^*$ is a
tree of fans $F(s,h^{k-1})$. Define $H^*$ as the graph
induced in $H$ by the clean vertices, i.e., by $V(F^*)$.

\subsubsection*{3.  The spine for the master caterpillar of $H^*$}\hfill

We adopt a notion of level in $H$ from the tree of fans
$F(s f_k(p),h^{k-1})$ contained in $H$. Let $\lev(u)$ denote the level of
vertex $u$, the level of the root is 0 and the level of vertices from the
boundary of $H$ is~$h^{k-1}$. If $vw$ is an edge of $H$ and
$\lev(v) < \lev(w)$ we say that $w$ is a
\emph{child} of $v$.

With this child relation the definition of a caterpillar and an $H^*$-based
caterpillar are exactly as in the previous subsection.

The next step is to define the spine of the master caterpillar.
This path $S=v_0,v_1,\ldots,v_{h^{k-1}}$ starting from the root $v_0$ of $H^*$
is computed within the clean spanning tree of fans $F^*$
contained in $H^*$.
The procedure is exactly as before, we only recall the main steps.

Suppose $v_{\ell-1}$ has been identified.
The next vertex $v_{\ell}$ of $S$ is determined by iterated halving of the path of
clean children of~$v_{\ell-1}$ in $F^*$. This produces $\logs$ sets of colors whose
union is denoted $B_\ell$.

With the same definitions as before we again have
$(d\lll-1)+(d\rrr-1) > p - |Q_j|$. 

Based on the inequality $s + 2t \leq p$ we get\footnote{The difference to the
  corresponding statement in the previous subsection comes from the fact that there the
  definition of $t$ had a $+1$ which is now shifted to the definition of
  $h$.}:
\begin{quote}
  Every $H^*$-based caterpillar of depth at most $t$ rooted at $v_\ell$
  misses at least $\logs$ colors from~$B_\ell$.
\end{quote}

\subsubsection*{4.  Color collecting sub-caterpillars of the master}\hfill

Having defined the vertices of the path $S$ we define a caterpillar $K_i$ for
each vertex~$v_i$ of $S$ as follows. The spine
$S_i =w_0,w_1,\ldots , w_{d_i}$ of $K_i$ is determined on the basis of
$S$. For the first vertex we take $w_0=v_i$. When $w_0 ,\ldots,w_j$ have
been determined and $w_{j}\neq v_{h^{k-1}}$, then let $w_{j+1}$ be the 
last child of $w_j$ in $S$, i.e, the last vertex $w$ on $S$ such
that $w_jw$ is an edge of $H^*$. With a vertex $w_j$ of the spine $S_i$
the caterpillar $K_i$ also includes all the children of $w_j$ in $H^*$
as hairs.

\begin{klaim}
  $V(K_{i+1}) \subset V(K_{i})$.
\end{klaim}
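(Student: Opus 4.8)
The plan is to identify the ``last child along $S$'' relation exactly, and then argue combinatorially about iterating it.

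\emph{Step 1: structure of $H$ along $S$.} The only edges of $H=G_k(sf_k(p),h)$ beyond those of its spanning tree of fans come from the operations ``make the root an universal vertex''; each such operation creates a copy of some $G_j(w,d,1)$ whose vertex set is a vertex $u$ together with all tree-of-fans descendants of $u$ at levels $\lev(u)+1,\dots,\lev(u)+h^{j-2}$, each of which becomes adjacent to $u$. By unwinding the recursive definition of $G_k(w,d)$ one verifies that the copies rooted at a fixed vertex are nested, so their union is the largest one, and that for $1\le m\le k-2$ a vertex $v_\ell$ with $\ell<h^{k-1}$ roots a copy of depth $h^{m}$ if and only if $h^{m}\mid\ell$. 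Since $S=v_0,\dots,v_{h^{k-1}}$ is a root-to-boundary path in the clean tree of fans $F^{*}\subseteq H^{*}$, and $H^{*}$ is induced in $H$ on the clean vertices, it follows that for each $\ell$ with $0\le\ell<h^{k-1}$ the neighbours of $v_\ell$ in $H^{*}$ at levels greater than $\ell$ are exactly the clean tree-of-fans descendants of $v_\ell$ at levels $\ell+1,\dots,b(\ell)$, where $b(\ell)=\ell+h^{\min\{k-2,\,\nu(\ell)\}}$ and $\nu(\ell)$ is the $h$-adic valuation of $\ell$ (with $\nu(0)=\infty$). In particular $v_{b(\ell)}$ is the last child of $v_\ell$ along $S$, so the spine of $K_i$ is $(v_\ell)_{\ell\in\mathrm{Orb}(i)}$, where $\mathrm{Orb}(i)=(i,b(i),b(b(i)),\dots,h^{k-1})$ is the $b$-orbit of $i$ (strictly increasing, hence finite), and $V(K_i)=\{v_\ell:\ell\in\mathrm{Orb}(i)\}\cup\bigcup_{\ell\in\mathrm{Orb}(i)}N^{+}_{H^{*}}(v_\ell)$, where $N^{+}_{H^{*}}(v_\ell)$ denotes the set of children of $v_\ell$ in $H^{*}$.

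\emph{Step 2: monotonicity of $b$.} I will show that $i<a<b(i)$ implies $b(a)\le b(i)$. Set $\iota=\min\{k-2,\nu(i)\}$, so $b(i)=i+h^{\iota}$ and $h^{\iota}\mid i$. From $0<a-i<h^{\iota}$ we get $h^{\iota}\nmid a$, hence $\nu(a)<\iota$ and $b(a)=a+h^{\nu(a)}$. Now $h^{\nu(a)}$ divides both $i$ and $a$, so $a$ is a multiple of $h^{\nu(a)}$ strictly between $i$ and $i+h^{\iota}$; the largest such multiple is $i+h^{\iota}-h^{\nu(a)}$, so $b(a)=a+h^{\nu(a)}\le i+h^{\iota}=b(i)$.

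\emph{Step 3: comparing $K_{i+1}$ with $K_i$.} If $b(i)=i+1$ then $\mathrm{Orb}(i)$ is $i$ followed by $\mathrm{Orb}(i+1)$, so $K_{i+1}$ is obtained from $K_i$ by deleting $v_i$ together with the hairs at $v_i$, whence $V(K_{i+1})\subseteq V(K_i)$. Now suppose $b(i)>i+1$ and write $\mathrm{Orb}(i+1)=(c_0,c_1,c_2,\dots)$ with $c_0=i+1$. By Step 2 the strictly increasing sequence $(c_j)$ cannot skip over $b(i)$, so $c_r=b(i)$ for some $r$ and $(c_r,c_{r+1},\dots)$ is the tail $(b(i),b(b(i)),\dots)$ of $\mathrm{Orb}(i)$; hence every spine vertex $v_{c_j}$ with $c_j\ge b(i)$, and every hair of $K_{i+1}$ attached to it, already lies in $V(K_i)$. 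For the low part, let $j<r$; then $c_j\in(i,b(i))$, so $v_{c_j}$ is a clean tree-of-fans descendant of $v_i$ at a level $\le b(i)$, hence $v_{c_j}\in N^{+}_{H^{*}}(v_i)$ by Step 1, which is part of $V(K_i)$ because $v_i$ is the (non-last) first spine vertex of $K_i$. Finally, any hair of $K_{i+1}$ attached to $v_{c_j}$, that is, any vertex of $N^{+}_{H^{*}}(v_{c_j})$, is by Step 1 a clean tree-of-fans descendant of $v_{c_j}$ at a level $\le b(c_j)\le b(i)$ (Step 2); being a clean descendant of $v_i$ at a level in $(i,b(i)]$, it lies in $N^{+}_{H^{*}}(v_i)\subseteq V(K_i)$. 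Therefore $V(K_{i+1})\subseteq V(K_i)$, properly, since every vertex of $K_{i+1}$ has level at least $i+1$ while $v_i\in V(K_i)$ has level $i$.

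\emph{Main obstacle.} All the substance is in Step 1: deriving the explicit formula for $b$ — equivalently, showing that each ``universal'' subgraph is a full tree-of-fans descendant-cone of depth $h^{m}$ with $m\le k-2$ and that such cones are rooted precisely at the vertices whose level is divisible by $h^{m}$ — which needs a careful induction on the recursive construction of $G_k(w,d)$ (on $k$ and on the auxiliary parameter $\delta$). Given that, Steps 2 and 3 are elementary. Note that in the case $k=2$, where $b(\ell)=\ell+1$ for all $\ell$ and $K_i$ has spine $v_i,\dots,v_{h^{k-1}}$, this recovers the trivial observation used for outerplanar graphs.
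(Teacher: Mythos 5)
Your proof is correct and follows essentially the same route as the paper's: identify how far the universal-vertex cones reach (the jump $b(\ell)=\ell+h^{\min\{k-2,\nu(\ell)\}}$, which is the paper's case distinction on divisibility of $\lev(v_i)$ by powers of $h$), observe that the spine of $K_{i+1}$ rejoins that of $K_i$ at $v_{b(i)}$, and check that the intermediate spine vertices and their hairs are children of $v_i$ in $H^*$. Your Steps 2--3 merely make explicit (via the monotonicity of $b$ and the orbit comparison) what the paper compresses into ``the previous considerations also imply that $w$ belongs to the spine of $S_{i+1}$,'' so this is a more detailed write-up of the same argument rather than a different one.
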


\begin{proof}
  {}From the definition $\lev(v_i) = \lev(v_{i+1})-1$.  First suppose that
  $\lev(v_i)$ is not divisible by $h$.  Then in the construction
  of $H$ the vertex $v_i$ was never element of a boundary, i.e., it wasn't
  used as an universal vertex of a
  subgraph. Therefore, all the children of $v_i$ in $H$ and in particular
  $v_{i+1}$ belong to level $\lev(v_i)+1$.  The spine $S_i$ thus starts with
  $v_i,v_{i+1}$ and by construction $S_i = v_iS_{i+1}$. This implies the claim
  in this case.

  Now suppose that $\ell\geq 1$ is maximal with the property that $h^\ell$
  divides $\lev(v_i)$. In this case $v_i$ was made the universal vertex
  of some copy
  $G(v_i)$ of $G_{\ell+1}(s f_k(p),h,1)$ used in the construction. If the
  spine $S_i$ thus starts with $v_i,w$, then $\lev(w) = \lev(v_i) + h^\ell$,
  i.e., $w = v_{i+h^\ell}$.  Moreover, all the elements $v_j$ of the spine $S$
  which are between $v_i$ and $w$ and all their children also belong to
  $G(v_i)$ and hence as children of $v_i$ also belong to $K_i$.  The previous
  considerations also imply that $w$ belongs to the spine of $S_{i+1}$.  This
  proves the claim in this case.
\end{proof}

Let $C_i$ be the set of colors of $K_i$. From the previous claim
we get the nesting of the colorsets
$C_0\supset C_1 \supset \ldots \supset C_{h^{k-1}}$.

{}From the construction we know that for each vertex $v_i$ in $S$ the caterpillar $K_i$ is an $H^*$-based
caterpillar.
\begin{klaim}
  For $i>0$, the depth $d_i$ of $K_{i}$ is smaller than $(k-1)h$.
\end{klaim}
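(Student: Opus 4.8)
The claim is that for $i>0$ the spine $S_i=w_0,w_1,\ldots,w_{d_i}$ of the caterpillar $K_i$ has length $d_i<(k-1)h$. The plan is to bound how much the level increases along a single step of the spine from below by $h^{\ell}$ for appropriate $\ell$, and to observe that the total level increase along the whole spine is at most $h^{k-1}$ (since we go from $\lev(v_i)$ to at most $h^{k-1}$, the level of the boundary of $H$). Combining a lower bound on each step with the upper bound on the total gives the desired bound on the number of steps.

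First I would unravel the construction of $S_i$: recall that $w_0=v_i$ and $w_{j+1}$ is the \emph{last} child of $w_j$ along the master spine $S$, i.e.\ the last vertex $w$ on $S$ with $w_jw\in E(H^*)$. The key point, already used in the preceding claim, is that if $\ell\geq1$ is maximal with $h^{\ell}\mid \lev(w_j)$, then $w_j$ was made universal in a copy of $G_{\ell+1}(sf_k(p),h,1)$, so its last child on $S$ sits at level $\lev(w_j)+h^{\ell}$; and if $\lev(w_j)$ is not divisible by $h$ at all, then $w_j$ was never on a boundary and its only children lie at level $\lev(w_j)+1$. In every case the level strictly increases, and in fact it increases by $h^{\ell}$ where $h^{\ell}$ is the largest power of $h$ dividing $\lev(w_j)$ (with the convention $\ell=0$ when $h\nmid\lev(w_j)$, giving increase $1=h^0$). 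So walking along $S_i$ we see a sequence of level-jumps $h^{\ell_0},h^{\ell_1},\ldots$.

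Next I would reinterpret this as base-$h$ addition. Think of the levels as integers written in base $h$ with $k-1$ digits (they range in $\{0,1,\ldots,h^{k-1}\}$). Each spine step takes the current level $L$ with largest dividing power $h^{\ell}$ and replaces it by $L+h^{\ell}$; since $h^{\ell}$ exactly divides $L$, this is the operation ``increment the $\ell$-th digit by one'', possibly with carries. A clean way to bound the number of such steps: the quantity $\Phi(L):=\sum_{m=0}^{k-2}\lceil L/h^{m}\rceil$ (or more simply, the sum of digits plus suitable carry terms) strictly decreases by at least $1$... actually it is cleaner to note that each step strictly increases $L$ by at least $1$ and that the number of \emph{distinct} values of $\ell$ that can occur is at most $k-1$, while a fixed $\ell$ can be used at most $h-1$ times in a row before a carry forces a jump to a higher digit. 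A short induction on $k-1$ (the number of digits) then shows the total number of increment operations needed to reach level $h^{k-1}$ from any starting level is at most $(k-1)(h-1)<(k-1)h$. This yields $d_i\le (k-1)(h-1)<(k-1)h$.

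The main obstacle I anticipate is making the ``base-$h$ carry'' bookkeeping airtight: one must check that when a carry occurs the maximal dividing power genuinely jumps to the carried-into position, so that the per-digit count of $h-1$ steps is never exceeded, and one must handle the boundary case where $\lev(w_j)=h^{k-1}$ already (then $w_j=v_{h^{k-1}}$ and the spine stops). I would isolate this as a small arithmetic lemma: \emph{starting from any integer $L_0$ with $0\le L_0<h^{k-1}$, the process $L\mapsto L+h^{\nu_h(L)}$ reaches $h^{k-1}$ in at most $(k-1)(h-1)$ steps}, where $\nu_h$ denotes the largest exponent $\ell$ with $h^{\ell}\mid L$ (and $\nu_h(0)$ taken to be $k-2$ or handled separately since $L_0=\lev(v_i)$ with $i>0$ is a nonzero multiple of at most $h^{k-2}$). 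Proving this lemma by induction on $k$ — peeling off the top digit — is routine, and then the claim follows immediately since each spine step of $K_i$ realizes exactly one step of this process and the number of hairs does not affect $d_i$.
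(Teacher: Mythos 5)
Your proof is correct, and it takes a more arithmetic route than the paper. The paper argues by induction on $k$ using the recursive structure of $G_k(sf_k(p),h)$ directly: it splits the spine $S_i$ at the first vertex $w_j$ whose level is divisible by $h^{k-2}$, notes that $w_0,\ldots,w_j$ all lie in a single copy of $G_{k-1}(sf_k(p),h)$ so that $j<(k-2)h$ by induction, and that the remaining spine vertices all have levels divisible by $h^{k-2}$, of which there are at most $h$ in the range up to $h^{k-1}$; this gives $d_i<(k-1)h$. You instead extract from the preceding claim the exact step size of the spine --- each step increases the level by $h^{\nu}$, where $h^{\nu}$ is the largest power of $h$ dividing the current level (with $\nu=0$ when $h\nmid\lev(w_j)$) --- and then reduce the claim to a purely arithmetic statement about the process $L\mapsto L+h^{\nu_h(L)}$ in base $h$: the position of the lowest nonzero digit never decreases, each of the $k-1$ digit positions can absorb at most $h-1$ increments before carrying, so at most $(k-1)(h-1)$ steps reach $h^{k-1}$. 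This is sound (all $w_j$ lie on $S$, so the level-jump fact from the previous claim applies at every step, and since $h^{\nu}\mid L$ and $L<h^{k-1}$ the process never overshoots $h^{k-1}$), and it even yields the marginally stronger bound $d_i\le(k-1)(h-1)$. What the paper's argument buys is brevity by leaning on the recursive graph decomposition; what yours buys is an explicit, self-contained counting argument that makes the carry structure of the levels visible and avoids re-invoking the construction of $G_{k-1}$ inside the induction. Just make sure, if you write it up, to state the step-size fact as a standalone consequence of the preceding claim's proof (it is proved there only in the course of showing $V(K_{i+1})\subset V(K_i)$, so it deserves an explicit sentence), and to handle the trivial case $\lev(v_i)=h^{k-1}$, where $d_i=0$.
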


\begin{proof}
  Let $S_i =w_0,w_1,\ldots w_{d_i}$. Clearly $w_{d_i}=v_{h^{k-1}}$.  Let
  $j$ be the least index such that $\lev(w_j)$ is divisible by $h^{k-2}$.
  Then $\lev(w_\ell)$ is also divisible by $h^{k-2}$ for all $j < \ell \leq d_i$.
  Therefore, there are at most $h$ vertices~$w_\ell$ with $\lev(w_\ell)$ divisible
  by $h^{k-2}$ in $S_i$.

  {}From the minimality of $j$ it follows that $w_0,\ldots w_{j}$
  are all taken from the same copy of $G_{k-1}(s f_k(p),h)$ used in the
  construction of $H$. By induction, the base case is a tree of fans,
  we can asume that $j < (k-2)h$. 
\end{proof}

Since $(k-1)h \leq t+1$
we conclude that the depth of $K_{i}$ is at most $t$ for all
$i > 0$. 

The set $B_i$ is a set of colors used by children of the predecessor $v_{i-1}$
of $v_i$, therefore, $B_i \subset C_{i-1}$. Caterpillar $K_i$ is $H^*$-based,
rooted at $v_i$, and of depth at most $t$, therefore, $K_i$ misses at
least $\logs$ colors from $B_i$. Together this shows that
$|C_{i-1}| \geq |C_i|+\logs$, whence $|C_0|\geq h^{k-1}\logs$.  This
completes the proof that $\phi$ uses at least $f_k(p)$ colors
on $H$.

\section{Further directions}
We finish the paper with our two favourite problems.
\begin{conjecture}
Planar graphs admit $p$-centered colorings with $\Oh(p^2\log p)$ colors.
\end{conjecture}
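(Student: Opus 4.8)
The plan is to attack the superfluous factor of $p$ in Theorem~\ref{thm-planar-technical}. Together with Theorem~\ref{thm:pb+p-bounds}.\ref{enu-stackedtri-up} that theorem gives $\chi_p(G)\le 3(p+1)f(p)$ with $f(p)=\Oh(p^2\log p)$ for every planar $G$, and since Theorem~\ref{thm:pb+p-bounds}.\ref{enu-stackedtri-low} shows $f(p)=\Omega(p^2\log p)$, the only room to improve the present argument is in the reduction from planar graphs to planar graphs of treewidth~$3$. So the goal is to lift a $p$-centered colouring of the quotient $G/\calP$ to a $p$-centered colouring of $G$ paying only an $\Oh(1)$, or at worst an $\Oh(\log p)$, multiplicative factor, rather than $\Oh(p)$.

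First I would scrutinise the coordinate $\alpha(v)=i\bmod(p+1)$ in the proof of Theorem~\ref{thm-planar-technical}: it serves only to resolve the case of a colour collision $\phi(v)=\phi(v')$ for $v\in V_i$ and $v'\in V_j$ that lie in the same part $X\in\calP$, where one argues that $H$ then meets every layer between $V_i$ and $V_j$ and that $(p+1)$-periodicity of $\alpha$ forces $p+1$ colours on $H$. The natural attempt is to replace $\alpha$ by a $p$-centered colouring of the layer indices viewed as a path, which uses only $\Oh(\log p)$ colours; combined with the $\psi$-coordinate and the three-valued $\gamma$-coordinate this would give $\Oh(\log p)\cdot f(p)$ colours, after which one would hope to fold the surplus logarithm into the outerplanar colouring underlying $f(p)$. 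To set this up one would, via Lemma~\ref{lem-structure-of-planar}, fix a BFS-layering of $G$, build $\calP$ from it, arrange a BFS-layering of $G/\calP$ refining it, and run the layer-by-layer scheme of Section~\ref{sec-upper-bounds-subplanars} on $G/\calP$ so that the quotient colours already carry the layer data. What then remains is to certify a collision of $v,v'$ inside a single part $X$, a set that meets each BFS-layer in at most three vertices.

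This is where I expect the argument to stall, and where the real content of the conjecture lies: planar graphs are not chordal. In the bounded-treewidth proof one passes to a chordal completion and uses Corollary~\ref{Cor:shadow} --- the intersection of a connected subgraph with its lowest BFS-layer is connected --- precisely to push a colour collision into a single controlled layer. For planar $G$ this fails: the trace $H\cap X$ of the connecting subgraph on the part $X$ can be disconnected, so a same-colour pair $v,v'$ inside a long $X$ cannot be certified from any bounded-palette colouring of $X$, since a colouring of a path on $n$ vertices in which every subpath contains a uniquely-coloured vertex needs $\Theta(\log n)$ colours. Breaking this seems to require new structural input: a bounded-width substitute for the shadow-completeness of Lemma~\ref{lemma-shadow} that governs how a connected subgraph of a planar graph can leave and re-enter a BFS-layering, or a refinement of the product structure theorem crafted for $p$-centered colourings. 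Pending such a tool, I would first aim merely to break the $\Oh(p^3\log p)$ barrier, say at $\Oh(p^2\log^2 p)$, and only afterwards push for the conjectured $\Oh(p^2\log p)$.
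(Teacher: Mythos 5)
This statement is not a theorem of the paper but one of its open problems: the paper proves only the upper bound $\Oh(p^3\log p)$ (Theorem~\ref{thm:planarup}) and the lower bound $\Omega(p^2\log p)$ (Theorem~\ref{thm:pb+p-bounds}.\ref{enu-stackedtri-low}), and explicitly leaves the $\Oh(p^2\log p)$ upper bound as a conjecture. Your proposal, as you yourself concede in the final paragraph, does not close this gap either: it identifies the $(p+1)$ factor coming from the coordinate $\alpha(v)=i\bmod(p+1)$ in Theorem~\ref{thm-planar-technical} as the target, and then stalls exactly at the point where one would need a substitute for shadow completeness (Lemma~\ref{lemma-shadow}, Corollary~\ref{Cor:shadow}), which is unavailable because planar graphs are not chordal and the trace of a connected subgraph on a part $X\in\calP$ or on a BFS-layer need not be connected. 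So there is a genuine gap, and it is the gap the paper itself acknowledges; your diagnosis of where the difficulty sits is sound, but no new tool is supplied.

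One concrete error in the intermediate plan should also be flagged: you propose to ``replace $\alpha$ by a $p$-centered colouring of the layer indices viewed as a path, which uses only $\Oh(\log p)$ colours.'' Long paths do not admit $p$-centered colourings with $o(p)$ colours. Indeed, if a $p$-centered colouring of $P_n$ used at most $p$ colours, then every connected subgraph would carry at most $p$ colours and hence would contain a uniquely coloured vertex; such a colouring is a centered colouring, so it needs at least $\tw$-depth many colours, i.e.\ at least $\lceil\log_2(n+1)\rceil$, which exceeds $p$ once $n>2^p$. Thus $\chi_p(P_n)=p+1$ for long paths, and the periodic coordinate $\alpha$ in Theorem~\ref{thm-planar-technical} is already optimal if it has to certify layer-collisions on its own. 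Any improvement must therefore make the quotient colour $\beta$ and the layer coordinate share the certification work --- which is precisely the interplay you observe you cannot control without a strengthened product-structure or shadow-type lemma. In its present form the proposal establishes nothing beyond the paper's $\Oh(p^3\log p)$ bound, and even the suggested intermediate target $\Oh(p^2\log^2 p)$ is not reached by the argument sketched.
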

\begin{question}
Do outerplanar graphs admit $p$-linear colorings with $\Oh(p)$ colors? 
\label{que:outerplanar}
\end{question}
The best we know for Question~\ref{que:outerplanar} is $\Oh(p\log p)$ by Theorem~\ref{thm:pb+p-bounds}.\ref{enu-outerplanar-up}.

Another line of thought is that our upper bound for bounded degree graphs implies the existence of $p$-centered colorings 
with $\Oh(p)$ colors of planar grids.
Interestingly, the authors have not been able to provide a direct construction for such a coloring.

\section*{Acknowledgements}
We are grateful to Micha{\l} Pilipczuk for pointing out 
that the upper bound for bounded degree graphs was the only missing 
piece to get a polynomial bound for graphs excluding a fixed topological minor.
We thank Gwenaël Joret for pointing out how to set up an entropy compression argument 
for Theorem~\ref{thm:bounded-degree} to get
an upper bound $\Oh(p\cdot\Delta^{2-\epsi})$ with $\epsi=\Omega(1/p)$. 
In the earlier version of this paper~\cite{DFMS20} we have just $\epsi>0$.
We are also grateful to Hendrik Schrezenmaier, Micha{\l} Seweryn, and Raphael Steiner
for many lively discussions on $p$-centered colorings.

\bibliographystyle{amsplain}
\bibliography{sgt}

\begin{aicauthors}
\begin{authorinfo}[MD]
Michał Dębski\\
Faculty of Informatics\\
Masaryk University\\
Brno, Czech Republic\\
and\\
Faculty of Mathematics and Information Sciences\\
Warsaw University of Technology\\
Warsaw, Poland\\
\email{michal.debski87@gmail.com}
\end{authorinfo}
\begin{authorinfo}[SF]
Stefan Felsner\\
Institut f\"ur Mathematik\\
Technische Universit\"at Berlin\\
Berlin, Germany\\
\email{felsner@math.tu-berlin.de}
\end{authorinfo}
\begin{authorinfo}[PM]
Piotr Micek\\
Institute of Theoretical Computer Science\\
Faculty of Mathematics and Computer Science\\
Jagiellonian University\\
Krak\'ow, Poland\\
\email{piotr.micek@uj.edu.pl}
\end{authorinfo}
\begin{authorinfo}[FS]
Felix Schröder\\
Institut f\"ur Mathematik\\
Technische Universit\"at Berlin\\
Berlin, Germany\\
\email{fschroed@math.tu-berlin.de}
\end{authorinfo}
\end{aicauthors}


\end{document}